\DeclareFontFamily{U}{mathx}{\hyphenchar\font45}
\DeclareFontShape{U}{mathx}{m}{n}{
      <5> <6> <7> <8> <9> <10>
      <10.95> <12> <14.4> <17.28> <20.74> <24.88>
      mathx10
      }{}
\DeclareSymbolFont{mathx}{U}{mathx}{m}{n}
\DeclareMathAccent{\widecheck}{0}{mathx}{"71}
\DeclareFontFamily{OT2}{cmr}{\hyphenchar\font45 }
\DeclareFontShape{OT2}{cmr}{m}{n}{
<5><6><7><8><9>gen*wncyr
<10><10.95><12><14.4><17.28><20.74><24.88>wncyr10}{}
\DeclareFontShape{OT2}{cmr}{b}{n}{
<5><6><7><8><9>gen*wncyb
<10><10.95><12><14.4><17.28><20.74><24.88>wncyb10}{}
\DeclareMathAlphabet{\mathcyr}{OT2}{cmr}{m}{n}
\DeclareMathAlphabet{\mathcyb}{OT2}{cmr}{b}{n}
\SetMathAlphabet{\mathcyr}{bold}{OT2}{cmr}{b}{n}
\theoremstyle{plain}
\newtheorem{theorem}{Theorem}[section]
\newtheorem{proposition}[theorem]{Proposition}
\newtheorem{lemma}[theorem]{Lemma}
\newtheorem{corollary}[theorem]{Corollary}
\theoremstyle{definition}
\newtheorem{definition}[theorem]{Definition}
\newtheorem{example}[theorem]{Example}
\newtheorem{remark}[theorem]{Remark}
\newcommand{\jump}[1]{\ensuremath{[\![#1]\!]}}
\newcommand{\emp}{\varnothing}
\newcommand{\BB}{\mathbb{B}}
\newcommand{\CC}{\mathbb{C}}
\newcommand{\DD}{\mathbb{D}}
\newcommand{\ee}{\boldsymbol{\epsilon}}
\newcommand{\ba}{\boldsymbol{a}}
\newcommand{\bk}{\boldsymbol{k}}
\newcommand{\bl}{\boldsymbol{l}}
\newcommand{\bm}{\boldsymbol{m}}
\newcommand{\bq}{\boldsymbol{q}}
\newcommand{\QQ}{\mathbb{Q}}
\newcommand{\RR}{\mathbb{R}}
\newcommand{\bw}{\boldsymbol{w}}
\newcommand{\ZZ}{\mathbb{Z}}
\newcommand{\bz}{\boldsymbol{z}}
\newcommand{\fZ}{\mathfrak{Z}}
\newcommand{\Li}{\mathrm{Li}^{\sh}}
\newcommand{\dep}{\mathrm{dep}}
\newcommand{\wt}{\mathrm{wt}}
\newcommand{\up}{\uparrow}
\newcommand{\hof}{\mathcal{A}}
\newcommand{\sh}{\mathbin{\mathcyr{sh}}}
\numberwithin{equation}{section}
\address[Hanamichi Kawamura]{Department of Mathematics, Faculty of Science Division I, Tokyo University of Science, 1-3 Kagurazaka, Shinjuku-ku, Tokyo, 162-8601, Japan}
\email{1121026@ed.tus.ac.jp}
\address[Takumi Maesaka]{Department of Information and Computer Science, Kanazawa Institute of Technology, 7-1 Ohgigaoka, Nonoichi-shi, Ishikawa, 921-8501, Japan}
\email{c1117334@planet.kanazawa-it.ac.jp}
\address[Shin-ichiro Seki]{Department of Mathematical Sciences\\ Aoyama Gakuin University\\ 5-10-1 Fuchinobe, Chuo-ku, Sagamihara-shi, Kanagawa, 252-5258, Japan}
\email{seki@math.aoyama.ac.jp}
\title{Multivariable connected sums and multiple polylogarithms}
\author{Hanamichi Kawamura, Takumi Maesaka, Shin-ichiro Seki}
\date{}
\thanks{This research was supported in part by JST Global Science Campus ROOT program and by JSPS KAKENHI Grant Number 18J00151, 21K13762.}
\keywords{multiple zeta values, multiple polylogarithms, connected sums, connector, Ohno's relation}
\begin{document}
\maketitle
\begin{abstract}
We introduce the multivariable connected sum which is a generalization of Seki--Yamamoto's connected sum and prove the fundamental identity for these sums by series manipulation.
This identity yields explicit procedures for evaluating multivariable connected sums and for giving relations among special values of multiple polylogarithms.
In particular, our class of relations contains Ohno's relations for multiple polylogarithms.
\end{abstract}

\section{Introduction}\label{sec:Intro}
In \cite{Weisstein1}, a symmetrical double series expression of $\zeta(2)$ due to B. Cloitre is exhibited:\begin{equation}\label{eq:Cloitre}
\zeta(2)=\sum_{m=1}^{\infty}\sum_{n=1}^{\infty}\frac{(m-1)!(n-1)!}{(m+n)!}.
\end{equation}
The right-hand side of \eqref{eq:Cloitre} is a special case of the \emph{connected sum} introduced by the third author and Yamamoto \cite{Seki-Yamamoto}.
We provide a brief review of this theory here.
We call a tuple of positive integers $\bk=(k_1,\dots,k_r)$ an \emph{index}.
If $k_r\geq 2$, then we call $\bk$ an \emph{admissible} index.
We call $r$ (resp.~$k_1+\cdots+k_r$) the \emph{depth} (resp.~\emph{weight}) of $\bk$ and denote it by $\dep(\bk)$ (resp.~$\wt(\bk)$).
We regard the empty tuple $\emp$ as an admissible index and call it the \emph{empty index}.
We set $\dep(\emp)\coloneqq 0$ and $\wt(\emp)\coloneqq 0$.
For two indices $\bk=(k_1,\dots,k_r)$ and $\bl=(l_1,\dots, l_s)$, the connected sum $Z(\bk;\bl)$ is defined by
\[
Z(\bk;\bl)\coloneqq\sum_{\substack{0=m_0<m_1<\cdots<m_r \\ 0=n_0<n_1<\cdots<n_s}}\frac{m_r!n_s!}{(m_r+n_s)!}\cdot\left[\prod_{i=1}^r\frac{1}{m_i^{k_i}}\right]\cdot\left[\prod_{j=1}^s\frac{1}{n_j^{l_j}}\right]\in\RR\cup\{+\infty\}.
\]
Throughout, we understand that the empty product (resp.~sum) is $1$ (resp.~$0$).
It is straightforward to see that the symmetry $Z(\bk;\bl)=Z(\bl;\bk)$ and the \emph{boundary condition} $Z(\bk;\emp)=\zeta(\bk)$ hold.
Here $\zeta(\bk)$ is the \emph{multiple zeta value} (MZV)
\[
\zeta(\bk)\coloneqq\sum_{0=m_0<m_1<\cdots<m_r}\frac{1}{m_1^{k_1}\cdots m_r^{k_r}}
\]
and this series is convergent if $\bk$ is admissible.
The most important properties for the connected sum are the \emph{transport relations}.
To state them, we introduce \emph{arrow notation} for indices.
For a non-empty index $\bk=(k_1,\dots,k_r)$, we define $\bk_{\to}$ (resp.~$\bk_{\up}$) to be $(k_1,\dots,k_r,1)$ (resp.~$(k_1,\dots,k_{r-1},k_r+1)$).
We set $\emp_{\to}\coloneqq(1)$ and $\emp_{\up}\coloneqq\emp$.
Then the transport relations for $Z(\bk;\bl)$ are as follows (the $q\to1$, $x\to0$ case of \cite[Theorem~2.2]{Seki-Yamamoto}):
\begin{equation}\label{eq:SY-trans}
\begin{split}
Z(\bk_{\to};\bl)&=Z(\bk;\bl_{\up}) \quad (\bl\neq\emp),\\
Z(\bk_{\up};\bl)&=Z(\bk;\bl_{\to}) \quad (\bk\neq\emp).
\end{split}
\end{equation}
By using these properties of the connected sum, we can prove the duality relation for MZVs ``$\zeta(\bk)=\zeta(\bk^{\dagger})$.''
Here, we explain the definition of the dual index $\bk^{\dagger}$.
For a non-negative integer $m$ and a complex number $z$, the symbol $\{z\}^m$ denotes $m$ repetitions of $z$.
Any admissible index $\bk\neq\emp$ is uniquely expressed as $\bk=(\{1\}^{a_1-1},b_1+1,\dots,\{1\}^{a_h-1},b_h+1)$, where $h,a_1,\dots,a_h,b_1,\dots,b_h$ are positive integers.
Then, the dual index is defined as $\bk^{\dagger}\coloneqq(\{1\}^{b_h-1},a_h+1,\dots,\{1\}^{b_1-1},a_1+1)$.
For the empty index, we use $\emp^{\dagger}\coloneqq\emp$.
The duality relation is immediately proved by the change of variables in the iterated integral representation of the MZV (a special case of \eqref{eq:Goncharov} below).
We emphasize that Seki--Yamamoto's proof is executed using only series manipulation and no iterated integral is required.

Since the single index $\bk=(2)$ is self-dual, the duality for this $\bk$ is a tautology ``$\zeta(2)=\zeta(2)$.''
However, the transportation process includes a non-trivial component as follows:
\begin{equation}\label{eq:Tautological}
\zeta(2)=Z(2;\emp)=Z(1;1)=Z(\emp;2)=\zeta(2).
\end{equation}
We see that the identity $\zeta(2)=Z(1;1)$ is nothing but Cloitre's identity \eqref{eq:Cloitre}.
Note that parentheses will often be omitted, such as $Z((2);\emp)=Z(2;\emp)$ and $Z((1);(1))=Z(1;1)$.

In \cite{Weisstein2}, an elegant series expression of Ap\'ery's constant $\zeta(3)$ owing to O. Oloa is exhibited:
\begin{equation}\label{eq:Oloa}
\zeta(3)=\frac{1}{3}\sum_{m=1}^{\infty}\sum_{n=1}^{\infty}\frac{(m-1)!(n-1)!}{(m+n)!}\sum_{j=1}^{m+n}\frac{1}{j}.
\end{equation}
Identities \eqref{eq:Cloitre} and \eqref{eq:Oloa} are also discussed on the website ``les-mathematiques.net'' in a forum post entitled ``Acc\'el\'eration pour zeta(2)'' \cite{LM1}.
In a post entitled ``S\'erie triple'' \cite{LM2} on the same website, the following series identity is given:
\[
\sum_{m_1=1}^{\infty}\sum_{m_2=1}^{\infty}\sum_{m_3=1}^{\infty}\frac{(m_1-1)!(m_2-1)!(m_3-1)!}{(m_1+m_2+m_3)!}=\frac{13}{4}\zeta(3)-\frac{\pi^2}{2}\log 2.
\]
Furthermore, a user named Amtagpa showed
\begin{equation}\label{eq:Amtagpa}
\sum_{m_1=1}^{\infty}\cdots\sum_{m_{n+1}=1}^{\infty}\frac{(m_1-1)!\cdots(m_{n+1}-1)!}{(m_1+\cdots+m_{n+1})!}=n!\Li_{\{1\}^{n-1},2}\left(1,\frac{1}{2},\dots,\frac{1}{n}\right)
\end{equation}
for a general positive integer $n$ by using the iterated integral representation.
Here the \emph{multiple polylogarithm} (MPL) $\Li_{\bk}(z_1,\dots, z_r)$ is of the so-called shuffle-type:
\[
\Li_{\bk}(z_1,\dots,z_r)\coloneqq\sum_{0=m_0<m_1<\cdots<m_r}\prod_{i=1}^r\frac{z_i^{m_i-m_{i-1}}}{m_i^{k_i}},
\]
where $\bk=(k_1,\dots, k_r)$ is an index and $z_1,\dots, z_r$ are complex numbers satisfying $|z_1|$, $\dots,|z_r|\leq 1$.
If $(k_r,|z_r|)\neq (1,1)$, then this series is absolutely convergent.
The shuffle-type MPL has the iterated integral representation
\begin{equation}\label{eq:Goncharov}
\Li_{\bk}(z_1,\dots,z_r)=(-1)^r\mathrm{I}_{\bk}(z_1^{-1},\dots,z_r^{-1})
\end{equation}
in the notation of \cite{G}.
The shuffle-type MPLs inherit the shuffle product property of iterated integrals directly in the $z_i$ variables and this fact is the reason why we call these series ``shuffle-type.''
There is another type of MPL, the harmonic-type MPL used and defined in \S\ref{sec:boundary condtions}, but we principally focus on the shuffle-type MPL in this paper.
We can check that $2\Li_{1,2}\left(1,\frac{1}{2}\right)=\frac{13}{4}\zeta(3)-\frac{\pi^2}{2}\log 2$ holds.
For example, by the duality for MPLs (the $n=2$ case of \eqref{eq:11112duality}), we see that $\Li_{1,2}\left(1,\frac{1}{2}\right)=-\zeta(\overline{1},\overline{2})$; see \S\ref{subsec:duality for MPL} for the notation for the alternating multiple zeta values.
Then the evaluation $-2\zeta(\overline{1},\overline{2})=\frac{13}{4}\zeta(3)-\frac{\pi^2}{2}\log 2$ is confirmed by \cite[Proposition~14.2.7]{Zhao}.

Our focus in this paper is on interpreting identities \eqref{eq:Oloa} and \eqref{eq:Amtagpa} by transporting indices in the same way as in Cloitre's identity \eqref{eq:Cloitre}.
For this purpose, we extend Seki--Yamamoto's connected sum to the \emph{multiple connected sum} as follows.
We set $S_r\coloneqq\{\bm=(m_0,m_1,\dots,m_r)\in\ZZ^{r+1}\mid m_0=0<m_1<\cdots<m_r\}$ and $S^{\star}_s(m)\coloneqq\{\bq=(q_0,q_1,\dots,q_s)\in\ZZ^{s+1}\mid q_0=0<q_1\leq\cdots\leq q_s=m\}$ for non-negative integers $r,s$ and $m$.
Furthermore, we set
\[
C(a_1,a_2,\dots,a_n)\coloneqq\frac{(a_1)!(a_2)!\cdots(a_n)!}{(a_1+a_2+\cdots+a_n)!}
\]
for a positive integer $n$ and non-negative integers $a_1,\dots, a_n$.
This value $C(a_1,a_2,\dots,a_n)$ is just the inverse of a multinomial coefficient, but in the context of multiple connected sums, we call it the \emph{connector} because of its role.
Using these notation, for a positive integer $n$ and indices $\bk_1=(k_1^{(1)},\dots,k_{r_1}^{(1)}),\dots, \bk_n=(k_1^{(n)},\dots,k_{r_n}^{(n)})$ and $\bl=(l_1,\dots,l_s)$,  we define the multiple connected sum $Z_n(\bk_1;\dots;\bk_n\mid\bl)$ as
\begin{align*}
&Z_n(\bk_1;\dots;\bk_n\mid\bl)\\
&\coloneqq\sum_{\substack{\bm^{(1)}\in S_{r_1}, \dots, \bm^{(n)}\in S_{r_n} \\ \bq\in S^{\star}_s(m_{r_1}^{(1)}+m_{r_2}^{(2)}+\cdots+m_{r_n}^{(n)})}}C(m_{r_1}^{(1)},m_{r_2}^{(2)},\dots,m_{r_n}^{(n)})\cdot\left[\prod_{j=1}^n\left(\prod_{i=1}^{r_j}\frac{1}{(m_i^{(j)})^{k_i^{(j)}}}\right)\right]\cdot\left[q_s\prod_{i=1}^s\frac{1}{q_i^{l_i}}\right],
\end{align*}
where $\bm^{(1)}=(m_1^{(1)},\dots,m_{r_1}^{(1)}),\dots, \bm^{(n)}=(m_1^{(n)},\dots, m_{r_n}^{(n)})$ and $\bq=(q_1,\dots, q_s)$.
Set $Z_n(\bk_1;\dots;\bk_n\mid\emp)\coloneqq0$ and $Z_n(\bk_1;\dots;\bk_n)\coloneqq Z_n(\bk_1;\dots;\bk_n\mid1)$.

The original Seki--Yamamoto's connected sum $Z(\bk;\bl)$ coincides with our $Z_2(\bk;\bl)=Z_2(\bk;\bl\mid1)$, not $Z_1(\bk\mid \bl)$.
The series appearing in identities \eqref{eq:Oloa} and \eqref{eq:Amtagpa} are $Z_2(1;1\mid1,1)$ and $Z_{n+1}(1;\dots;1)$, respectively.
The presence of the $\bl$-component in the definition is advantageous for obtaining relations among MZVs or MPLs, and the $n=1$ case of our connected sum has been explored in previous studies; $Z_1(\bk\mid\bl_{\up})$ ($\bk, \bl\neq\emp$) is the multiple zeta value of Kaneko--Yamamoto type introduced in \cite{Kaneko-Yamamoto} and $Z_1(\bk\mid \bl_{\to})$ ($\bk$ is admissible) is the Schur multiple zeta value of anti-hook type introduced in \cite{Nakasuji-Phuksuwan-Yamasaki}.
Note that these two kinds of series are the same object by definition or by the $n=1$ case of Proposition~\ref{prop:tate_transport} below.

In what follows, we provide an explicit procedure for calculating $Z_n(\bk_1;\dots;\bk_n\mid\bl)$ via transportations of indices and prove the transport relations by series manipulation.
In addition to identities \eqref{eq:Oloa} and \eqref{eq:Amtagpa}, our method leads to a formula for $\zeta(4)$ (see Example~\ref{ex:new formula for zeta(4)}):
\begin{equation}\label{eq:Oloa-like}
\zeta(4)=\frac{8}{17}\sum_{m=1}^{\infty}\sum_{n=1}^{\infty}\frac{(m-1)!(n-1)!}{(m+n)!}\sum_{j=1}^{m+n}\frac{1}{j^2}.
\end{equation}
More generally, we prove the following.
\begin{theorem}\label{thm:main_with_no_variables}
Let $n$ be an integer at least $2$ and $\bk_1,\ldots,\bk_n,\bl$ non-empty indices.
We set $k\coloneqq \wt(\bk_1)+\cdots+\wt(\bk_n)+\wt(\bl)$ and define $\mathrm{MPL}(n,k)$ by
\[
\mathrm{MPL}(n,k)\coloneqq\left\{\Li_{\bk}(1,z_2,\dots, z_r) \ \middle| \ {1\leq r\leq k-2, \bk \ \text{is admissible with } \wt(\bk)=k-1, \atop \dep(\bk)=r, \text{and } z_2,\dots,z_r\in \{1,\frac{1}{2},\dots,\frac{1}{n}\}}\right\}.
\]
Then, the multiple connected sum $Z_n(\bk_1;\dots;\bk_n\mid\bl)$ can be written explicitly as a $\ZZ$-linear combination of elements of $\mathrm{MPL}(n,k)$.
\end{theorem}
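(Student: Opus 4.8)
The plan is to combine a transport-based reduction with a single boundary computation. First I would apply the transport relations — the multivariable generalization of \eqref{eq:SY-trans}, recorded in Proposition~\ref{prop:tate_transport} — to bring $Z_n(\bk_1;\dots;\bk_n\mid\bl)$ into a terminal form in which every upper index has collapsed to the singleton $(1)$. Concretely, for each chain $\bk_j=(k_1^{(j)},\dots,k_{r_j}^{(j)})$ I peel entries off the right: when the last entry exceeds $1$ I write $\bk_j=(\dots,k_{r_j}^{(j)}-1)_{\up}$ and transport the excess onto $\bl$ by the $\up$-relation, and when the last entry equals $1$ while the depth exceeds $1$ I write $\bk_j=(\dots)_{\to}$ and peel the trailing $1$ by the $\to$-relation, incrementing the last entry of $\bl$. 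Iterating, each $\bk_j$ is reduced to $(1)$ while $\bl$ grows to an explicit index $\bl^{*}$ of weight $k-n$. Because $\bl$ stays non-empty throughout and no chain is ever emptied, the side conditions of the transport relations are always satisfied, so this is a finite deterministic process yielding the single identity $Z_n(\bk_1;\dots;\bk_n\mid\bl)=Z_n((1);\dots;(1)\mid\bl^{*})$.

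Second, I would evaluate this terminal sum directly from the definition. Writing $M\coloneqq m^{(1)}+\cdots+m^{(n)}$, the upper part collapses, via $m^{(j)}!/m^{(j)}=(m^{(j)}-1)!$, to the connector times the singleton factors, namely $\prod_{j}(m^{(j)}-1)!\,/\,M!$, while the lower part is the value at $M$ of a harmonic-type sum over $0<q_1\le\cdots\le q_s=M$ determined by $\bl^{*}$. Thus $Z_n((1);\dots;(1)\mid\bl^{*})$ is a connector-weighted $n$-fold series whose summand depends on the $m^{(j)}$ only through $M$, and the problem is reduced to evaluating such a series.

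Third — and this is the crux — I would establish the boundary-evaluation lemma of \S\ref{sec:boundary condtions} expressing such a connector-weighted series as a $\ZZ$-linear combination of shuffle-type multiple polylogarithms. The mechanism is to carry out the chain summations one at a time using elementary identities such as $\sum_{m\ge1}(m-1)!/(A+m)!=1/(A\cdot A!)$ and its weighted refinements: performing the summations successively produces the arguments $1,\tfrac12,\dots$ up to $\tfrac1n$ (which is how the values in $\{1,\tfrac12,\dots,\tfrac1n\}$ enter, the first argument being pinned to $1$ since it records the outermost index $M$), and the net effect of the whole summation is to lower the total weight by exactly one. Hence the resulting polylogarithms have weight $k-1$; as they arise from an admissible harmonic-type sum they are admissible, so their depth $r$ automatically satisfies $1\le r\le k-2$. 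This is precisely the generalization of the identity \eqref{eq:Amtagpa} that governs the case $\bk_1=\dots=\bk_n=\bl=(1)$.

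The main obstacle is the boundary lemma of the third step: the transport reduction is routine once the transport relations are in hand, but the iterated connector summation requires careful bookkeeping — tracking which argument $1/j$ is produced at each stage and converting the non-strict inequalities $q_1\le\cdots\le q_s$ of the harmonic-type sum into the strict inequalities defining a shuffle-type $\Li$. I would therefore prove the boundary lemma by induction on $n$, using the fundamental identity to package the single-chain summation step, and then read off membership in $\mathrm{MPL}(n,k)$ directly from the weight, admissibility, and argument bounds established above.
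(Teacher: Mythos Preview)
Your first step does not work as described. Proposition~\ref{prop:tate_transport} is not a multivariable analogue of the Seki--Yamamoto relations~\eqref{eq:SY-trans}: it is a single identity equating one $Z_n$-value to a \emph{sum} of $n$ $Z_n$-values (all components but one carrying an $\uparrow$), and it contains no $\to$-clause at all. There is no relation in the paper of the form $Z_n(\bk_1;\dots;(\bk_j)_{\uparrow};\dots;\bk_n\mid\bl)=Z_n(\bk_1;\dots;\bk_j;\dots;\bk_n\mid\bl')$ that moves weight from a single $\bk_j$ into $\bl$ and returns a single term; the actual transport relation~\eqref{eq:main transport relation} (a consequence of the fundamental identity, Theorem~\ref{thm:fund_id}) always outputs a sum of $n$ terms. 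Consequently the ``finite deterministic process yielding the single identity $Z_n(\bk_1;\dots;\bk_n\mid\bl)=Z_n((1);\dots;(1)\mid\bl^{*})$'' cannot be carried out, and the reduction to a unique terminal form collapses. Your step~3 is then essentially circular: evaluating $Z_n((1);\dots;(1)\mid\bl^{*})$ as a $\ZZ$-linear combination in $\mathrm{MPL}(n,k)$ is itself an instance of the theorem, and the boundary condition of \S\ref{sec:boundary condtions} treats only $Z_1$-values, not connector-weighted $n$-fold sums.

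For comparison, the paper's proof runs orthogonally to yours. Rather than pushing all weight into $\bl$, one uses~\eqref{eq:main transport relation} to transport the first $n-1$ components into the $n$th, which empties one slot at a time and (via Proposition~\ref{prop:fundamental_properties}\,\eqref{item:reduction}) lowers $Z_n$ to a $\ZZ$-linear combination of $Z_{n-1}$-values, then $Z_{n-2}$-values, and so on down to $Z_1$-values. This is where the extra variables enter: the constraint $\sum_i 1/v_i=t$ forces the new arguments to lie in $\{1,-1,-\tfrac12,\dots,-\tfrac{1}{n-1}\}$. The genuine boundary condition of \S\ref{sec:boundary condtions} then expresses each $Z_1$-value as a $\ZZ$-linear combination of MPLs, and a final application of the duality (Theorem~\ref{thm:duality}) converts the arguments $-\tfrac{1}{j}$ to $\tfrac{1}{j+1}$, landing in $\mathrm{MPL}(n,k)$.
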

The proof of this theorem by transporting indices requires the use of more general connected sums; see Example~\ref{ex:tranport relation for Z_3} for instance.
In view of this phenomenon, it is natural to define the following generalization of the multiple connected sum.
Let $n$ be a positive integer and $\bk_1=(k_1^{(1)},\dots,k_{r_1}^{(1)}),\dots, \bk_n=(k_1^{(n)},\dots,k_{r_n}^{(n)})$ and $\bl=(l_1,\dots,l_s)$ indices.
Let $\bz_1=(z_1^{(1)},\dots,z_{r_1}^{(1)}),\dots, \bz_n=(z_1^{(n)},\dots,z_{r_n}^{(n)})$ and $\bw=(w_1,\dots, w_s)$ be elements of $\DD^{r_1},\dots,\DD^{r_n}$ and $\DD^{s}$, respectively.
Here, the symbol $\DD$ denotes the unit disk $\{z\in\CC\mid |z|\leq 1\}$ and the unique element of $\DD^0$ is denoted by $\varnothing$.
Then, the \emph{multivariable connected sum} is defined by
\begin{multline*}
Z_n\left({\bz_1 \atop \bk_1};\dots;{\bz_n \atop \bk_n} \ \middle| \ {\bw \atop \bl}\right)\coloneqq\sum_{\substack{\bm^{(1)}\in S_{r_1}, \dots, \bm^{(n)}\in S_{r_n} \\ \bq\in S^{\star}_s(m_{r_1}^{(1)}+m_{r_2}^{(2)}+\cdots+m_{r_n}^{(n)})}}C(m_{r_1}^{(1)},m_{r_2}^{(2)},\dots,m_{r_n}^{(n)})\\
\cdot\left[\prod_{j=1}^n\left(\prod_{i=1}^{r_j}\frac{(z_i^{(j)})^{m_i^{(j)}-m_{i-1}^{(j)}}}{(m_i^{(j)})^{k_i^{(j)}}}\right)\right]\cdot\left[q_s\prod_{i=1}^s\frac{w_i^{q_i-q_{i-1}}}{q_i^{l_i}}\right],
\end{multline*}
where $\bm^{(1)}=(m_1^{(1)},\dots,m_{r_1}^{(1)}),\dots, \bm^{(n)}=(m_1^{(n)},\dots, m_{r_n}^{(n)})$ and $\bq=(q_1,\dots, q_s)$.
Set $Z_n\left({\bz_1\atop \bk_1};\dots;{\bz_n\atop\bk_n} \ \middle| \ {\emp\atop\emp}\right)\coloneqq0$ and $Z_n\left({\bz_1 \atop \bk_1};\dots;{\bz_n \atop \bk_n}\right)\coloneqq Z_n\left({\bz_1 \atop \bk_1};\dots;{\bz_n \atop \bk_n} \ \middle| \ {1 \atop 1}\right)$.
We establish the \emph{fundamental identity} for multivariable connected sums.
To do so, we generalize the arrow notation appropriately.
Let $\bk$ be an index and $\bz\in\DD^{\dep(\bk)}$.
Let $v$ be an element of $\DD\setminus\{0\}$.
Then, we use the following notation:
\[
\left({\bz \atop \bk}\right)_{\overset{v}{\longrightarrow}}\coloneqq \left({(\bz,v) \atop \bk_{\to}}\right), \quad \left({\bz \atop \bk}\right)_{\overset{0}{\longrightarrow}}\coloneqq\left({\bz \atop \bk}\right)_{\up}\coloneqq\left({\bz \atop \bk_{\up}}\right),\quad \left({\bz \atop \bk}\right)_{\overset{\infty}{\longrightarrow}}\coloneqq -\left({\bz \atop \bk}\right)_{\up}.
\]
In relation to the last symbol, we use the following definition:
\[
Z_n\left(\epsilon_1\left({\bz_1 \atop \bk_1}\right);\dots;\epsilon_n\left({\bz_n \atop \bk_n}\right) \ \middle| \ {\bw \atop \bl}\right)\coloneqq \epsilon_1\cdots\epsilon_n\cdot Z_n\left({\bz_1 \atop \bk_1};\dots;{\bz_n \atop \bk_n} \ \middle| \ {\bw \atop \bl}\right)
\]
for $\epsilon_1,\dots, \epsilon_n\in\{1,-1\}$.

The following identity is key to this paper.
\begin{theorem}[Fundamental identity]\label{thm:fund_id}
Let $n$ be an integer greater than $1$ and $\bk_1,\dots, \bk_{n+1}$ indices. 
Take tuples of complex numbers $\bz_i\in\DD^{\dep(\bk_i)}$ for all $1\leq i \leq n+1$.
Furthermore, take $v_1,\dots, v_n\in(\DD\setminus\{0\})\cup\{\infty\}$ and $v_{n+1}\in\DD$ satisfying
\[
\frac{1}{v_1}+\cdots+\frac{1}{v_n}=v_{n+1}
\]
and the assumption that 
\begin{enumerate}[$(1)$]
\item if $\bk_i=\emp$ $(1\leq i\leq n)$, then $v_i\neq \infty$, and
\item if $n=2$ and $\bk_i=\emp$ for $1\leq i\leq 2$, then $|v_{3-i}|\neq 1$ or $|v_3|<1$.
\end{enumerate}
Here, we understand $1/\infty=0$ and $|\infty|=\infty$.
Then, we have

\begin{framed}
\[
\sum_{i=1}^{n+1}Z_n\left(\left({\bz_1 \atop \bk_1}\right)_{\overset{v_1}{\longrightarrow}};\dots;{\bz_i\atop \bk_i};\dots;\left({\bz_n \atop \bk_n}\right)_{\overset{v_n}{\longrightarrow}} \ \middle| \ \left({\bz_{n+1} \atop \bk_{n+1}}\right)_{\overset{v_{n+1}}{\longrightarrow}}\right)=0.
\]
\end{framed}

\noindent Here, the symbol $Z_n\left(\left({\bz_1 \atop \bk_1}\right)_{\overset{v_1}{\longrightarrow}};\dots;{\bz_i\atop \bk_i};\dots;\left({\bz_n \atop \bk_n}\right)_{\overset{v_n}{\longrightarrow}} \ \middle| \ \left({\bz_{n+1} \atop \bk_{n+1}}\right)_{\overset{v_{n+1}}{\longrightarrow}}\right)$ means that only the $i$th component is not equipped with the arrow notation.
\end{theorem}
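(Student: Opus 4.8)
The plan is to prove the identity by direct series manipulation, generalizing the Seki--Yamamoto transport mechanism behind \eqref{eq:SY-trans}. First I would fix, in every slot, all summation variables lying strictly below the top variable that feeds into the connector. Since the only coupling among the $n$ upper components and the $\bl$-component is through the connector $C(m_{r_1}^{(1)},\dots,m_{r_n}^{(n)})$ and the constraint $q_s=m_{r_1}^{(1)}+\cdots+m_{r_n}^{(n)}$, and since each term is absolutely convergent under hypotheses $(1)$ and $(2)$, every rearrangement below is legitimate. This reduces the identity to a statement about the top variables together with the geometric weights introduced by the arrows, and by multilinearity in the inner structure one may even pin each slot's inner contribution.

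The engine is the following based connector lemma, a one-base refinement of the Seki--Yamamoto telescoping: for $\bn=(n_1,\dots,n_p)$ with $N=n_1+\cdots+n_p$, an integer $a\ge 0$, and $v\in\DD$,
\[
\sum_{b>a}v^{\,b-a}\left(\frac{b+N}{b}-v\right)C(b,n_1,\dots,n_p)=v\,C(a,n_1,\dots,n_p).
\]
I would prove this from the elementary telescoping $\frac{(b-1)!\,N!}{(b+N)!}=G(b)-G(b+1)$ with $G(b)=\frac{(b-1)!\,(N-1)!}{(b+N-1)!}$, followed by an Abel summation in $b$; at $v=1$ it degenerates to $\sum_{b>a}\frac1b C(b,\bn)=\frac1N C(a,\bn)$, precisely the transport identity driving \eqref{eq:SY-trans}. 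The point of the factor $\frac{b+N}{b}-v=\frac{T}{b}-v$, where $T=b+N$ is the total top, is that $T$ coincides with $q_s$, the prefactor $q_s$ carried by the $\bl$-component.

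The main argument then starts from the single term in which all upper slots are arrowed and $\bl$ is plain, so that the $\bl$-component supplies the prefactor $q_s=T=\sum_j b_j$. For a chosen slot $k$ I would group this prefactor with the $k$-th arrow weight $b_k^{-1}$ to expose the factor $\frac{T}{b_k}$, split it as $(\frac{T}{b_k}-v_k)+v_k$, and sum over $b_k$ by the lemma. The first piece un-arrows slot $k$, returning it to its plain top $a_k$ and producing the configuration with slot $k$ plain and all other slots arrowed, while the $v_k$-piece is a correction; carrying this out for all $k$ and reassembling should reproduce the $i=1,\dots,n$ terms of the identity. The relation $\frac1{v_1}+\cdots+\frac1{v_n}=v_{n+1}$ is exactly the algebraic condition under which the accumulated corrections reorganize into the arrowed-$\bl$ contributions, so that the grand total telescopes to zero; the degenerate bases are then handled as specializations, $v_j=0$ read as the up-arrow $(\cdot)_{\up}$ and $v_j=\infty$ as $-(\cdot)_{\up}$, both limits of the lemma, with $(1)$ and $(2)$ invoked to keep every series convergent and to prevent an arrowed empty slot from collapsing onto a plain one.

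The step I expect to be the crux is the coupling to the $\bl$-side. Because the $\bl$-component's inner sum depends on the total $T$, un-arrowing an upper slot through the lemma shifts the argument fed into the $\bl$-part: the lemma's main term delivers the plain-slot-$k$ configuration but carrying a \emph{bare} $\bl$-profile $h(T')$ rather than the genuinely arrowed-$\bl$ profile $\sum_{m\le T'}h(m)v_{n+1}^{\,T'-m}$ that appears in the target term. Matching these — showing that the $v_k$-corrections, transported along the $\bl$-side, supply exactly the missing $v_{n+1}$-geometric tail, and that $\sum_j v_j^{-1}=v_{n+1}$ makes the match exact — is where the real work lies. Everything else is bookkeeping of absolutely convergent series.
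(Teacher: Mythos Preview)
Your connector lemma is correct and is in fact the one-variable avatar of the identity the paper uses, namely $a_i\,C(a_1,\dots,a_i-1,\dots)=T\,C(a_1,\dots,a_i,\dots)$ with $T$ the total. The reduction to top variables is also the same. The gap is in the assembly, precisely at the point you flag as the crux.

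Starting from the $\bl$-plain, all-slots-arrowed term and trying to un-arrow slot $k$ with your lemma cannot work as written. After fixing the inner $\bl$-variables (top $q_s=q$), the $\bl$-plain term carries the hard constraint $T=q$; there is then no free $b_k$-sum to feed into a lemma whose whole content is a telescoping in $b_k$. If instead you leave $q$ unfixed, the summand acquires a factor depending on $T=b_k+N$ through the $\bl$-profile, and your lemma (which has no such factor) again does not apply; an Abel summation produces an extra discrete-derivative term in $T$ that does not obviously close up. Your assertion that the $v_k$-corrections reorganise into the $v_{n+1}$-geometric tail is the theorem itself, and nothing in the outline supplies a mechanism for it.

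The paper sidesteps this entirely by reversing the direction: it fixes $q$, starts from the slot-$i$-plain, $\bl$-arrowed terms (each carrying $t^{T-q}$ with $t=v_{n+1}$), writes each as a difference over $\bm^-\preceq_i\ba$ and $\bm^-\prec\ba$, shifts $a_i\mapsto a_i-1$ in the first part, and uses $C(\dots,a_i-1,\dots)a_i=T\,C(\ba)$. Summing over $i$ and invoking $\sum_i 1/v_i=t$ once turns the $\frac{1}{v_i}t^{-1}$ into $1$, so the $t^{T-q}$ factor is simply carried along, never dismantled. What remains is a pure telescoping in the range of $T$: the boundary at $T=q$ is exactly the $\bl$-plain term, and the boundary at $T=N$ tends to $0$ by the elementary estimate of Lemma~\ref{lem:easy-bound}. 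In short, keep your lemma but apply the shift to the $\bl$-arrowed terms and sum over $i$ before telescoping; then there is no tail to match, and the crux evaporates.
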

For example, the $n=3$ case for the fundamental identity is
\begin{align*}
&Z_3\left({\bz_1 \atop \bk_1};\left({\bz_2 \atop \bk_2}\right)_{\overset{v_2}{\longrightarrow}};\left({\bz_{3} \atop \bk_{3}}\right)_{\overset{v_{3}}{\longrightarrow}} \ \middle| \ \left({\bz_{4} \atop \bk_{4}}\right)_{\overset{v_{4}}{\longrightarrow}}\right) \\
&+Z_3\left(\left({\bz_1 \atop \bk_1}\right)_{\overset{v_1}{\longrightarrow}};{\bz_2 \atop \bk_2};\left({\bz_{3} \atop \bk_{3}}\right)_{\overset{v_{3}}{\longrightarrow}} \ \middle| \ \left({\bz_{4} \atop \bk_{4}}\right)_{\overset{v_{4}}{\longrightarrow}}\right) \\
&+Z_3\left(\left({\bz_1 \atop \bk_1}\right)_{\overset{v_1}{\longrightarrow}};\left({\bz_2 \atop \bk_2}\right)_{\overset{v_2}{\longrightarrow}};{\bz_{3} \atop \bk_{3}} \ \middle| \ \left({\bz_{4} \atop \bk_{4}}\right)_{\overset{v_{4}}{\longrightarrow}}\right) \\
&+Z_3\left(\left({\bz_1 \atop \bk_1}\right)_{\overset{v_1}{\longrightarrow}};\left({\bz_2 \atop \bk_2}\right)_{\overset{v_2}{\longrightarrow}};\left({\bz_{3} \atop \bk_{3}}\right)_{\overset{v_{3}}{\longrightarrow}} \ \middle| \ {\bz_{4} \atop \bk_{4}}\right) \\
&=0.
\end{align*}
The fundamental identity supplies \emph{all} needed transport relations for the multivariable connected sums in this paper.
By transporting indices and variables algorithmically, in accordance with the method presented in \cite{Seki}, a given multivariable connected sum satisfying a suitable condition is written as a $\ZZ$-linear combination of $Z_1$-values.
We regard the boundary conditions for multivariable connected sums as the fact that $Z_1$-values are written as $\ZZ$-linear combinations of MPLs; if $\bw=\bl=(1)$, then the boundary conditions are trivial.

Thus, main theorems of the present paper are  Theorem~\ref{thm:main_with_no_variables}, Theorem~\ref{thm:fund_id} and the following two theorems:
\begin{theorem}[see Theorem~\ref{thm:main1 precise} for the precise statement]\label{thm:main1}
Every multivariable connected sum with transportable variables can be expressed explicitly as a $\ZZ$-linear combination of absolutely convergent multiple polylogarithms.
\end{theorem}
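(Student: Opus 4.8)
The plan is to use the fundamental identity (Theorem~\ref{thm:fund_id}) as a transport engine, following the algorithmic method of \cite{Seki}, in order to strip the index-components $\bk_1,\dots,\bk_n$ down one at a time until a single one survives, and then to invoke the boundary conditions of \S\ref{sec:boundary condtions}. The pivotal structural observation is that whenever an index-component is empty, say $\bk_i=\emp$, the corresponding summation variable $\bm^{(i)}$ is forced to equal $(0)$; that slot then contributes the empty product $1$ and disappears from the connector $C$, so $Z_n$ degenerates to the $(n-1)$-fold connected sum on the remaining components, i.e.\ $Z_n\rightsquigarrow Z_{n-1}$. Thus it suffices to drive $n-1$ of the index-components to the empty index, lowering $n$ successively to $1$, and finally to apply the boundary conditions, which rewrite each surviving $Z_1$-value as a $\ZZ$-linear combination of multiple polylogarithms.

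First I would make the hypothesis \emph{transportable variables} precise as the requirement that, at every stage of this reduction, one may select arrow-parameters $v_1,\dots,v_n\in(\DD\setminus\{0\})\cup\{\infty\}$ and $v_{n+1}\in\DD$ obeying the balancing relation $\tfrac{1}{v_1}+\cdots+\tfrac{1}{v_n}=v_{n+1}$ together with hypotheses $(1)$--$(2)$ of Theorem~\ref{thm:fund_id}. Granting this, a single transport step runs as follows. One recognises the target sum as the term of the fundamental identity in which the $\bl$-component carries no arrow while all $n$ index-components do (each nonempty $\bk_j$ being an arrow-image, with $v_j=\infty$ when $\bk_j$ ends in a part $\ge 2$ and $v_j$ the trailing variable when it ends in $1$); solving that identity for this distinguished term expresses the target as a $\ZZ$-linear combination of the $n$ sibling terms, in each of which one index-component has shed its arrow while the $\bl$-component has instead absorbed an arrow $\overset{v_{n+1}}{\longrightarrow}$ (an $\up$-raise when $v_{n+1}=0$, otherwise an appended part with variable $v_{n+1}$). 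Because the identity preserves total weight while this rewriting transfers one unit from the index-components to the $\bl$-component, a suitable (in general lexicographic) complexity measure built from $\sum_{j=1}^n\wt(\bk_j)$ and the depths $\dep(\bk_j)$ strictly decreases; scheduling which component to reduce and which parameters to use, as in \cite{Seki}, then drives the chosen components to $\emp$ in finitely many steps, after which the degeneration above lowers $n$.

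The hard part is not the finite descent itself but the simultaneous control of the complex parameters and of convergence throughout it. Concretely, one must guarantee that a balancing choice $\tfrac{1}{v_1}+\cdots+\tfrac{1}{v_n}=v_{n+1}$ compatible with $(1)$--$(2)$ and with $v_{n+1}\in\DD$ genuinely exists at each stage---this is exactly what the transportability hypothesis is designed to encode, since a careless assignment (for instance several trailing variables equal to $1$) produces $v_{n+1}\notin\DD$ and blocks the step---and that the $v_j=\infty$ branch, which contributes the signed arrow $-(\cdot)_{\up}$ through the convention $Z_n(\epsilon_1(\cdot);\dots)=\epsilon_1\cdots\epsilon_n\,Z_n(\cdot)$, is bookkept so that the emerging coefficients stay integral. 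Keeping the arguments of the polylogarithms produced at the boundary inside the unit disk while steering clear of the divergent locus $(k_r,|z_r|)=(1,1)$ is the delicate point: this is precisely the information that the precise statement, Theorem~\ref{thm:main1 precise}, must pin down, and where hypothesis $(2)$ and the constraint $v_{n+1}\in\DD$ do their work in securing absolute convergence of the final $\Li$-values.
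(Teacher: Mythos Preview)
Your global architecture—iterate the fundamental identity until $n$ drops to $1$, then invoke the boundary conditions of \S\ref{sec:boundary condtions}—is exactly the paper's. The gap is in \emph{which} summand of Theorem~\ref{thm:fund_id} you isolate. You solve for the term in which the $\bl$-component is bare, which forces all of $v_1,\dots,v_n$ to be read off the trailing entries of $\bk_1,\dots,\bk_n$ and then demands $v_{n+1}=\sum_{j=1}^n 1/v_j\in\DD$. This already fails on the prototype $Z_n(1;\dots;1)$: there every $v_j=1$, so $v_{n+1}=n\notin\DD$ for $n\ge 2$, and your very first step is blocked—yet these sums are covered by Theorem~\ref{thm:main_with_no_variables}. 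Absorbing this obstruction into the word ``transportable'' would make the hypothesis strictly stronger than Definition~\ref{def:transportable} and would not recover Theorem~\ref{thm:main1 precise}.

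The paper instead isolates a term in which one \emph{index}-component is bare: the $j$ of Definition~\ref{def:transportable} is swapped to the $n$th slot and serves as a recipient, while arrows are stripped from the other $n-1$ components \emph{and} from $\bl$ (this is~\eqref{eq:main transport relation}). Now $v_1,\dots,v_{n-1}$ and $t$ are dictated by the data, and it is $v_n$ that is fixed by the balancing $1/v_n=t-\sum_{i<n}1/v_i$; the condition $(z_{a_i}^{(i)})_{i\in J}\in\BB_{\#J}(w_k)$ of Definition~\ref{def:transportable} says exactly that this $v_n$ always lands in $(\DD\setminus\{0\})\cup\{\infty\}$. Weight therefore flows from $\bk_1,\dots,\bk_{n-1}$ and $\bl$ \emph{into} $\bk_n$, not into $\bl$; the new variables accumulating on the recipient are never re-stripped, so they create no further constraints. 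There is also no ``scheduling which component to reduce'': once the recipient $j$ is fixed, the right-hand side of~\eqref{eq:main transport relation} is determined, and the descent is on the total weight carried by the non-recipient slots.
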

\begin{theorem}[see \S\ref{subsec:Recipe for relations} for the precise recipe]\label{thm:main2}
Transport relations and boundary conditions for multivariable connected sums give a family of functional relations among multiple polylogarithms.
As a special case, Ohno's relation for multiple polylogarithms is obtained.
\end{theorem}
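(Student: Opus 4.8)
The plan is to treat the fundamental identity (Theorem~\ref{thm:fund_id}) as the engine that \emph{produces} relations, and to use Theorem~\ref{thm:main1} together with the boundary conditions of \S\ref{sec:boundary condtions} as the machinery that \emph{translates} those relations into statements about multiple polylogarithms. First I would record the general production of functional relations. Fix a starting multivariable connected sum $Z_n\left({\bz_1\atop\bk_1};\dots;{\bz_n\atop\bk_n}\ \middle|\ {\bw\atop\bl}\right)$ with transportable variables. By Theorem~\ref{thm:main1}, the algorithmic transport process---each step an instance of the fundamental identity that moves one unit of weight and variable between the left-hand components and the $\bl$-component---terminates after finitely many steps in a $\ZZ$-linear combination of $Z_1$-values, and the boundary conditions then express each $Z_1$-value as a $\ZZ$-linear combination of absolutely convergent MPLs. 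A nontrivial relation arises precisely because this reduction is \emph{not} canonical: at a connected sum with several non-trivial components the fundamental identity may be applied to distinct components, so two different transport schedules reduce the \emph{same} initial sum to two a priori different $\ZZ$-linear combinations of elements of $\mathrm{MPL}(n,k)$. Equating them yields the asserted functional relation, and letting the initial data $(\bk_i,\bz_i,\bl,\bw)$ and the schedule vary sweeps out the whole family. The symmetry of the connector $C(a_1,\dots,a_n)$ under permutation of its arguments is what guarantees that distinct schedules remain comparable, so that both sides are honest MPL combinations.

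To make ``family of functional relations'' precise, and to set up the Ohno specialization, I would package the construction through a single master reduction along a fixed maximal schedule. This equates the two canonical endpoints of the transport chain: the \emph{source} boundary, where all weight sits on the $\bk$-side, and the \emph{dual} boundary, where the weight has been transported entirely onto the $\bl$-side. I would verify that each endpoint, after applying the boundary conditions, is manifestly a combination of the catalogued MPLs $\Li_{\bk}(1,z_2,\dots,z_r)$, and that the two expressions agree as series (not merely formally), which is where conditions $(1)$ and $(2)$ of Theorem~\ref{thm:fund_id} are needed to keep every intermediate series absolutely convergent and every specialization of the variables $v_i$ legitimate.

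For Ohno's relation I would specialize this master identity. Ohno's relation for MPLs asserts that the two sums obtained by adding a fixed total weight $m$ to the slots of an admissible index in all possible ways agree for the index and for its dual $\bk^{\dagger}$. The key device is that the transported variable carries a generating function: transporting a single slot repeatedly and summing geometrically over the number of repetitions realizes exactly the Ohno operation $\Li_{\bk}\mapsto\sum_{|\ba|=m}\Li_{\bk+\ba}$. Thus I would take the initial sum attached to $\bk$, weight the transported unit by a formal parameter, transport across the whole index by iterating the fundamental identity, and read off on the dual boundary the corresponding sum for $\bk^{\dagger}$; comparing the coefficient of $m$ total transported units on the two sides gives Ohno's relation, with the duality $\bk\mapsto\bk^{\dagger}$ emerging from the arrow bookkeeping exactly as in the proof sketched in \eqref{eq:Tautological} for the Seki--Yamamoto case. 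The $m=0$ term recovers the MPL duality of \eqref{eq:11112duality}, which is a useful internal check.

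The main obstacle I expect is this last identification rather than the general family. Stages one and two are essentially formal once Theorem~\ref{thm:fund_id}, Theorem~\ref{thm:main1}, and the boundary conditions are in hand. By contrast, matching the transport output to Ohno's relation demands careful generating-function bookkeeping: I must check that the geometric sum over repeated transports of a single unit reproduces precisely the weight-distribution sum $\sum_{|\ba|=m}$ with the correct multiplicities, that the dualization built into the transport chain coincides with the combinatorial dual $\bk\mapsto\bk^{\dagger}$, and that no extraneous MPL terms survive the cancellation. Controlling the interaction between the geometric variables $v_i$ and the admissibility constraints---so that convergence is maintained throughout and the formal parameter can be legitimately specialized---is the delicate part of the argument, and it is where the hypotheses of the fundamental identity must be tracked most carefully.
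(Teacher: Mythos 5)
Your first stage---generating the family by evaluating a single multivariable connected sum along two different reduction routes and equating the outputs---is essentially the paper's recipe in \S\ref{subsec:Recipe for relations}: there the two routes are the immediate reduction through an empty component (Proposition~\ref{prop:fundamental_properties}~\eqref{item:reduction}) versus transporting first via \eqref{eq:main transport relation} and reducing afterwards, and your comparison step is exactly the paper's item \eqref{item:rel recipe-3}. That part of your plan is sound, including the caveat that the output may be tautological.

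The gap is in the Ohno specialization, and it is not merely deferred bookkeeping: the identity you set out to prove is false for multiple polylogarithms. You state Ohno's relation for MPLs as the symmetric assertion that distributing $m$ extra units of weight over the slots of $\left({\bz\atop\bk}\right)$ and over its dual gives equal sums. That holds only in the MZV case $d=\iota(\bz)=0$. The correct statement (Theorem~\ref{thm:Ohno MPLs}) is asymmetric,
\[
O_h\left({\bz\atop\bk}\right)=(-1)^{d}\sum_{i=0}^h\sum_{\substack{b_1,\dots,b_d\geq 0 \\ b_1+\cdots+b_d=i}}O_{h-i}\left(\left({\bz'\atop\bk'}\right)_{b_1,\dots,b_d}\right),
\]
where the dual side acquires depth-raising insertions $\{z_j'\}^{b_j+1}$ over $\{1\}^{b_j}$; already in depth one the output is the Landen-type formula $\mathrm{Li}_k(z)=-\sum_{\wt(\bk)=k}\mathrm{Li}_{\bk}\bigl(\frac{z}{z-1}\bigr)$, whose right-hand side runs over indices of \emph{all} depths. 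Consequently your expectation that ``no extraneous MPL terms survive the cancellation'' is precisely what fails: in the paper's formalism the boundary condition produces not the Ohno operator $\sigma$ ($e_z\mapsto e_z(1-xt)^{-1}$) alone but $\sigma\circ\rho$ with the depth-raising twist $\rho(e_z)=e_z(1-e_zt)^{-1}$ (Proposition~\ref{prop:Hoffman boundary condtion}), and each transported letter is twisted by $\tau'$, carrying a factor $(1\pm e_{\frac{z}{z-1}}t)^{-1}$. These geometric series do not cancel; they are traded through the commutation $\rho\circ\tau'=\tau\circ\rho$ (Lemma~\ref{lem:rho_tau}) and the invertibility of $\rho$, which converts the transported identity $\mathfrak{L}(w)=\mathfrak{L}(\tau'(w))$ into $L(\sigma(w))=L(\sigma(\tau(w)))$ (Theorem~\ref{thm:Hoffman Ohno}), and they resurface exactly as the $b_1,\dots,b_d$-sums above. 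Relatedly, the generating parameter does not count ``repeated transports of a single slot''; it counts the length of the tail $\bl=\bw=(\{1\}^{h+1})$ in the sums $Z^{(h)}$, whose transport relations couple $h$ with $h-1$, which is what makes the $t$-power series transport uniformly. If you run your plan aiming at the symmetric identity, the computation will not close for any pair with $\iota(\bz)\geq 1$.
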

For simplicity, conditional convergent series and analytic continuations are disregarded herein.

The remainder of the paper is organized as follows.
In \S\ref{sec:the fundamental identity}, we prove the fundamental identity.
In \S\ref{sec:transport relations}, we explain transport relations derived from the fundamental identity and reprove the duality relation for MPLs.
In \S\ref{sec:boundary condtions}, we describe the boundary conditions.
In \S\ref{sec:Evaluations}, we give the recipe for obtaining evaluations of multivariable connected sums, and prove Theorem~\ref{thm:main_with_no_variables} and Theorem~\ref{thm:main1}.
Finally, in \S\ref{sec:Relations}, we give the recipe for obtaining relations among MPLs and prove Theorem~\ref{thm:main2}.
Ohno's relation for MPLs is also proved here.
Some examples are exhibited in \S\ref{sec:transport relations}--\S\ref{sec:Relations}.

\section*{acknowledgement}
The authors would like to express their sincere gratitude to Professor Masanobu Kaneko and Professor Tetsushi Ito for their helpful comments.
The first author would like to thank Dr.~Junpei Tsuji for guiding him through the program code for experiments on our proof of Ohno's relation.
The authors are grateful to the anonymous referee for valuable comments and useful suggestions to improve the manuscript. 
\section{The fundamental identity}\label{sec:the fundamental identity}
First we prepare basic properties of the multivariable connected sum.
\begin{proposition}\label{prop:fundamental_properties}
Let $n$ be a positive integer and $\bk_1,\dots, \bk_n, \bl$ indices.
Take tuples of complex numbers $\bz_i\in\DD^{\dep(\bk_i)}$ for all $1\leq i\leq n$ and $\bw\in\DD^{\dep(\bl)}$.
Then the following statements hold:
\begin{enumerate}[\rm(i)]
\item\label{it:2.1-1} For $\bz_1=(z_1,\dots, z_r), \bw=(w_1,\dots, w_s)$ $(r=\dep(\bk_1), s=\dep(\bl))$, we assume that $|z_r|<1$ or $|w_s|<1$ holds if both $\bk_1$ and $\bl$ are non-admissible.
Then $Z_1\left({\bz_1\atop\bk_1} \ \middle| \ {\bw \atop \bl}\right)$ converges absolutely, and for the case of $\bl=(1)$, $\bw=(1)$, we have $Z_1\bigl({\bz_1\atop\bk_1}\bigr)=\Li_{\bk_1}(\bz_1)$.
\item\label{item:conv_n} If $n>1$ and all $\bk_i$ are non-empty, then the multivariable connected sum
\[
Z_n\left({\bz_1 \atop \bk_1};\dots;{\bz_n \atop \bk_n} \ \middle| \ {\bw \atop \bl}\right)
\]
converges absolutely.
\item\label{item:symmetry}
For any permutation  $\sigma$ of degree $n$, 
\[
Z_n\left({\bz_1 \atop \bk_1};\dots;{\bz_n \atop \bk_n} \ \middle| \ {\bw \atop \bl}\right)=Z_n\left({\bz_{\sigma(1)} \atop \bk_{\sigma(1)}};\dots;{\bz_{\sigma(n)} \atop \bk_{\sigma(n)}} \ \middle| \ {\bw \atop \bl}\right)
\]
holds.
\item\label{item:reduction}
Let $n>1$. For all $1\leq i\leq n$, we have
\[
Z_n\left({\bz_1 \atop \bk_1};\dots;{\bz_{i-1}\atop\bk_{i-1}};{\emp \atop \emp};{\bz_{i+1}\atop\bk_{i+1}};\dots;{\bz_n \atop \bk_n} \ \middle| \ {\bw \atop \bl}\right)=Z_{n-1}\left({\bz_1 \atop \bk_1};\dots;{\widecheck{\bz_i} \atop \widecheck{\bk_i}};\dots;{\bz_n \atop \bk_n} \ \middle| \ {\bw \atop \bl}\right),
\]
where ${\widecheck{\bz_i} \atop \widecheck{\bk_i}}$ denotes the skipped entry.
\end{enumerate}
\end{proposition}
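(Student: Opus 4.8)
The plan is to treat the four assertions separately, dispatching the two formal identities (iii) and (iv) and the evaluation in (i) by direct manipulation of the defining series, and then to establish the two absolute-convergence claims (i) and (ii) by explicit majorant estimates. Throughout I write $\bk_1=(k_1,\dots,k_r)$, $\bl=(l_1,\dots,l_s)$, $\bz_1=(z_1,\dots,z_r)$, $\bw=(w_1,\dots,w_s)$ with $r=\dep(\bk_1)$, $s=\dep(\bl)$.

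First I would record that for $n=1$ the connector is trivial, $C(m_{r_1}^{(1)})=m_{r_1}^{(1)}!/m_{r_1}^{(1)}!=1$. For the evaluation in (i), specialize $\bl=\bw=(1)$, so $s=1$ and the inner sum over $\bq\in S^{\star}_1(m_r)$ consists of the single term $q_1=m_r$; the $\bl$-factor is then $q_1\cdot w_1^{q_1}/q_1^{1}=1$, and what remains is exactly $\sum_{\bm\in S_r}\prod_{i=1}^r z_i^{m_i-m_{i-1}}/m_i^{k_i}=\Li_{\bk_1}(\bz_1)$. For (iv), setting $(\bz_i,\bk_i)=(\emp,\emp)$ forces $r_i=0$ and $m_{r_i}^{(i)}=0$; since $0!=1$, the zero argument drops out of both the connector and the chain-top sum $m_{r_1}^{(1)}+\cdots+m_{r_n}^{(n)}$, and the surviving data is precisely the defining sum of $Z_{n-1}$ with the $i$th slot deleted. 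For (iii), the connector $C(a_1,\dots,a_n)$ and the argument $a_1+\cdots+a_n$ of the $\bq$-summation are symmetric functions of the chain-tops, so a permutation $\sigma$ merely relabels the independent summation blocks $\bm^{(1)},\dots,\bm^{(n)}$; the rearrangement is justified by the absolute convergence from (ii).

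The substantive work is the convergence in (i). Grouping the sum according to the common top $p\coloneqq m_r=q_s$, I would majorize the series of absolute values by $\sum_{p\ge1}U(p)V(p)\,p^{-(k_r+l_s-1)}$, where
\[
U(p)=\sum_{0<m_1<\cdots<m_{r-1}<p}\ \prod_{i=1}^{r-1}\frac{|z_i|^{m_i-m_{i-1}}}{m_i^{k_i}}\cdot|z_r|^{p-m_{r-1}},
\]
and $V(p)$ is the analogous weakly increasing $\bl$-sum carrying $|w_s|^{p-q_{s-1}}$. The two elementary facts I need are: (a) a multiple harmonic partial sum up to height $p$ grows only polylogarithmically, giving $U(p)=O((\log p)^{r-1})$ and $V(p)=O((\log p)^{s-1})$ when $|z_r|=|w_s|=1$; and (b) the geometric localization estimate $\sum_{a=1}^{p-1}x^{p-a}a^{-k}=O(p^{-k})$ for $0\le x<1$, $k\ge1$, which upgrades (a) to $U(p)=O((\log p)^{r-2}p^{-k_{r-1}})$ once $|z_r|<1$ (and symmetrically for $V$). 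A short case analysis then finishes: if $\bk_1$ or $\bl$ is admissible then $k_r+l_s-1\ge2$ and (a) already yields a convergent majorant; if both are non-admissible then $k_r=l_s=1$, and the hypothesis $|z_r|<1$ or $|w_s|<1$ supplies, via (b), the extra power of $p$ needed to dominate by $\sum_p p^{-2}(\log p)^{\ast}$. The degenerate cases $r=1$ or $s=1$ are easier still, since then $U(p)=|z_1|^{p}$ or $V(p)=|w_1|^{p}$ decays geometrically.

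For (ii) the connector supplies all the decay. Writing $N=m_{r_1}^{(1)}+\cdots+m_{r_n}^{(n)}$ and using $a_2!\cdots a_n!\le(a_2+\cdots+a_n)!$ gives $C(a_1,\dots,a_n)\le 1/N\le 1/\max_j a_j$. Taking absolute values, each non-empty block contributes a chain sum $f_j(a)=O((\log a)^{r_j-1}a^{-1})$ by (a) (since $k^{(j)}_{r_j}\ge1$), while the $\bl$-factor contributes $N^{1-l_s}O((\log N)^{s-1})=O((\log N)^{s-1})$. Splitting the summation region according to which chain-top $a_j$ is largest and using $a_i\le a_{\max}$ for the others reduces everything to a convergent series of the shape $\sum_{a_{\max}}a_{\max}^{-2}(\log a_{\max})^{\ast}$, the two powers of $a_{\max}$ coming from the bound on $C$ and from the maximal block. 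The main obstacle is the tightness of the criterion in (i): in the both-non-admissible case the naive majorant $\sum_p p^{-1}(\log p)^{\ast}$ diverges, so the argument must genuinely exploit the hypothesis $|z_r|<1$ or $|w_s|<1$ through the localization estimate (b), and one must check that these bounds are uniform enough in $p$ to survive the outer summation. By contrast the connector decay in (ii), and the identities (iii) and (iv), require only crude manipulations.
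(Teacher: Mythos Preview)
Your argument is correct. The formal identities in (iii) and (iv) and the evaluation $Z_1\bigl({\bz_1\atop\bk_1}\bigr)=\Li_{\bk_1}(\bz_1)$ are handled just as the paper does (the paper simply declares (i), (iii), (iv) ``obvious by definition''); you supply more detail for the convergence in (i) than the paper bothers with, but that analysis is sound.

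The one substantive difference is in (ii). The paper's proof rests on the sharper connector inequality
\[
C(m_1,\dots,m_n)\le\frac{1}{m_1\cdots m_n}\qquad(n\ge2,\ m_i\ge1),
\]
established by a short induction on $n$ from the $n=2$ case. This bound \emph{factorizes} the majorant: combined with the crude estimate $\sum_{\bq\in S_s^{\star}(N)}q_s\prod_iq_i^{-l_i}\le c_{\varepsilon,s,n}(m_1\cdots m_n)^{\varepsilon}$ for the $\bl$-part, the absolute series is dominated by a product of $n$ convergent multiple-zeta values, one per block, and no further work is needed. Your bound $C\le1/N\le1/a_{\max}$ extracts only a single power of decay from the connector, so the majorant does not factorize; you compensate by splitting the summation region according to the largest chain-top and absorbing the subordinate blocks as polylogarithmic factors. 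Both routes succeed. The paper's is shorter and avoids the case split, while yours uses only the trivial multinomial inequality $a_2!\cdots a_n!\le(a_2+\cdots+a_n)!$ and never needs the stronger bound $C\le\prod_i m_i^{-1}$. One minor remark: invoking absolute convergence from (ii) to justify (iii) is more than necessary---the permutation is a termwise bijection of the index set leaving each summand unchanged (the connector and $q_s$ depend only on the symmetric quantity $\sum_j m_{r_j}^{(j)}$), so the identity holds even when some $\bk_i=\emp$, a case (ii) does not cover.
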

\begin{proof}
We prove only \eqref{item:conv_n} because the others are obvious by definition.
It is sufficient to show that $Z_n(\bk_1;\dots;\bk_n\mid \bl)$ converges.
Since $n>1$, the inequality 
\begin{equation}\label{eq:connector ineq}
C(m_1,\dots,m_n)\leq \frac{1}{m_1\cdots m_n}
\end{equation}
holds for positive integers $m_1,\dots, m_n$; the case $n=2$ is proved from the estimate
\[
\binom{m_1+m_2}{m_2}\geq \binom{m_1+m_2}{2}=\binom{m_1}{2}+\binom{m_2}{2}+m_1m_2>m_1m_2\quad \text{for }m_1\geq m_2\geq 2,
\]
and then the general case follows from induction on $n$.
Take any $\varepsilon>0$ and set $s=\dep(\bl)$ and $\bl=(l_1,\dots,l_s)$. Then there exists some positive constant $c_{\varepsilon,s,n}$ depending on $\varepsilon, s, n$ such that
\[
\sum_{1\leq q_1\leq \cdots \leq q_s=m_1+\cdots+m_n}\frac{q_s}{q_1^{l_1}\cdots q_s^{l_s}}\leq c_{\varepsilon,s,n}\cdot (m_1\cdots m_n)^{\varepsilon}.
\]
By these elementary facts, we see that $Z_n(\bk_1;\dots;\bk_n\mid \bl)$ is bounded above by a constant multiple of a product of $n$ convergent special values  of the multiple zeta function.
\end{proof}
By virtue of Proposition~\ref{prop:fundamental_properties} \eqref{item:symmetry}, Theorem~\ref{thm:fund_id} is a consequence of Proposition~\ref{prop:tate_transport} and Theorem~\ref{thm:main_transport_relations} below.
\begin{proposition}\label{prop:tate_transport}
Let $n$ be a positive integer.
Let $\bk_1,\dots, \bk_n, \bl$ be non-empty indices. 
Take tuples of complex numbers $\bz_i\in\DD^{\dep(\bk_i)}$ for all $1\leq i \leq n$ and $\bw\in\DD^{\dep(\bl)}$.
Then we have
\[
\sum_{i=1}^{n}Z_n\left(\left({\bz_{1} \atop \bk_{1}}\right)_{\up};\dots;{\bz_i\atop\bk_i};\dots;\left({\bz_{n} \atop \bk_{n}}\right)_{\up} \ \middle| \ \left({\bw \atop \bl}\right)_{\up}\right)=Z_n\left(\left({\bz_1\atop \bk_1}\right)_{\up};\dots;\left({\bz_n\atop \bk_n}\right)_{\up} \ \middle| \ {\bw\atop\bl}\right).
\]
\end{proposition}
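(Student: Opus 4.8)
The plan is to establish the identity termwise, comparing the summands of the $n$ multivariable connected sums on the left-hand side with the summand of the single one on the right. The first thing I would record is that all $n+1$ of these connected sums run over exactly the same index set: the operation $(\,\cdot\,)_{\up}$ raises the last entry of an index by one but preserves its depth, and it does not alter the attached variables $\bz_j$ or $\bw$. Hence the domains $S_{r_1}\times\cdots\times S_{r_n}$ and the constraint $\bq\in S^{\star}_s(m^{(1)}_{r_1}+\cdots+m^{(n)}_{r_n})$ coincide on both sides, and it suffices to fix a single configuration $(\bm^{(1)},\dots,\bm^{(n)},\bq)$ and verify an algebraic identity among the corresponding summands.

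To organize this I would write $M_j:=m^{(j)}_{r_j}$ for the top of the $j$th upper chain, put $M:=M_1+\cdots+M_n$ (so that $q_s=M$ in every term), and let $T$ be the summand of $Z_n\bigl({\bz_1\atop\bk_1};\dots;{\bz_n\atop\bk_n}\mid{\bw\atop\bl}\bigr)$ attached to this configuration with no arrows applied; thus $T$ carries the connector $C(M_1,\dots,M_n)$, all the variable powers, and the lower-chain prefactor $q_s=M$. The only real bookkeeping is to track how $(\,\cdot\,)_{\up}$ rescales $T$: applying it to $\bk_j$ turns $1/(M_j)^{k^{(j)}_{r_j}}$ into $1/(M_j)^{k^{(j)}_{r_j}+1}$ and so multiplies $T$ by $1/M_j$, while applying it to $\bl$ turns $1/q_s^{\,l_s}$ into $1/q_s^{\,l_s+1}$ and so multiplies $T$ by $1/q_s=1/M$. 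Here it is worth noticing that the prefactor $q_s$ in the lower chain is exactly what makes the $\bl$-arrow contribute the clean factor $1/M$. Consequently the $i$th summand on the left equals $T\cdot\frac1M\prod_{j\neq i}\frac1{M_j}$, whereas the summand on the right equals $T\cdot\prod_{j=1}^n\frac1{M_j}$.

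The remaining computation is then immediate, and is the crux of the proof:
\[
\sum_{i=1}^n T\cdot\frac1M\prod_{j\neq i}\frac1{M_j}=\frac{T}{M}\left(\prod_{j=1}^n\frac1{M_j}\right)\sum_{i=1}^n M_i=\frac{T}{M}\left(\prod_{j=1}^n\frac1{M_j}\right)M=T\prod_{j=1}^n\frac1{M_j},
\]
which is exactly the right-hand summand; the sole input is the trivial relation $\sum_{i=1}^n M_i=M$. To upgrade this termwise identity to the asserted identity of series, I would invoke absolute convergence: for $n>1$ every connected sum occurring has $n$ non-empty upper indices and so converges absolutely by Proposition~\ref{prop:fundamental_properties}\eqref{item:conv_n}, while for $n=1$ one uses \eqref{it:2.1-1}, the point being that $\bl_{\up}$ (resp.\ $(\bk_1)_{\up}$) is admissible so no extra hypothesis on the variables is needed. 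Since the outer sum over $i$ on the left is finite, it may be interchanged with the summation over configurations, and the termwise identity delivers the result. I do not anticipate a substantive obstacle: the entire content is the factor-tracking under $(\,\cdot\,)_{\up}$, after which the statement collapses to $\sum_{i=1}^n M_i=M$.
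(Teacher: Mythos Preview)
Your argument is correct and is essentially the paper's own proof spelled out in full: the paper simply says the proposition follows from the identity $\sum_{i=1}^n\frac{m_i}{m_1\cdots m_n}=\frac{m_1+\cdots+m_n}{m_1\cdots m_n}$, which is exactly your termwise computation $\sum_i M_i=M$ after the factor-tracking you carried out. Your added remarks on matching index sets and on absolute convergence (including the $n=1$ case via admissibility of $(\bk_1)_{\up}$ and $\bl_{\up}$) are correct and make explicit what the paper leaves implicit.
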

\begin{proof}
This follows from the identity
\[
\sum_{i=1}^n\frac{m_i}{m_1\cdots m_n}=\frac{m_1+\cdots+m_n}{m_1\cdots m_n}
\]
for positive integers $m_1,\dots, m_n$.
\end{proof}
For a convergence argument, we need the following lemma.
\begin{lemma}\label{lem:easy-bound}
Let $n$ and $N$ be integers satisfying $N\geq n\geq2$.
Then we have
\[
\sum_{\substack{(a_1,\dots,a_n)\in\ZZ_{>0}^n\\a_1+\cdots+a_n=N}}\frac{C(a_1,\dots,a_n)}{a_1\cdots a_n}=O\left(\frac{1}{N^2}\right).
\] 
\end{lemma}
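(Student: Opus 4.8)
The plan is to start from the algebraic simplification
\[
\frac{C(a_1,\dots,a_n)}{a_1\cdots a_n}=\frac{a_1!\cdots a_n!}{N!\,a_1\cdots a_n}=\frac{(a_1-1)!\cdots(a_n-1)!}{N!},
\]
which holds because $a_i!/a_i=(a_i-1)!$. Substituting $b_i\coloneqq a_i-1\geq0$, so that $b_1+\cdots+b_n=N-n$, the whole sum becomes $T_n(N-n)/N!$, where I abbreviate
\[
T_n(M)\coloneqq\sum_{\substack{(b_1,\dots,b_n)\in\ZZ_{\geq0}^n\\b_1+\cdots+b_n=M}}b_1!\cdots b_n!.
\]
Thus the lemma is equivalent to an estimate of the shape $T_n(M)\leq c_n\,M!$ with a constant $c_n$ depending only on $n$; once this is available, the conclusion follows from $T_n(N-n)/N!\leq c_n(N-n)!/N!\leq c_n/(N(N-1))$ for $N\geq n\geq 2$, since the remaining factors $N-2,\dots,N-n+1$ of the falling factorial are all at least $1$.

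Next I would isolate the case $n=2$, which carries all the real content, and reduce general $n$ to it by induction. For $n=2$ one writes
\[
T_2(M)=\sum_{b=0}^{M}b!\,(M-b)!=M!\sum_{b=0}^{M}\binom{M}{b}^{-1},
\]
so everything hinges on bounding the reciprocal binomial sum $\sum_{b=0}^{M}\binom{M}{b}^{-1}$ uniformly in $M$. This is the crux: the two endpoint terms $b=0,M$ contribute $2$, while for $1\leq b\leq M-1$ one has $\binom{M}{b}\geq\binom{M}{1}=M$, so the remaining $M-1$ terms contribute at most $(M-1)/M<1$; hence the sum is $<3$ and $T_2(M)<3\cdot M!$. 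For the inductive step I would use the recursion $T_n(M)=\sum_{b=0}^{M}b!\,T_{n-1}(M-b)$, obtained by fixing the last index; feeding in the hypothesis $T_{n-1}(M-b)\leq c_{n-1}(M-b)!$ collapses the sum back to $c_{n-1}T_2(M)$, giving $T_n(M)\leq 3c_{n-1}M!$ and thus $c_n\leq 3^{n-1}$.

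The step I expect to be the main obstacle — or at least the one that must not be done carelessly — is precisely the passage from a crude to a sharp count. The naive bound, namely (number of compositions)$\times$(largest summand)$=\binom{N-1}{n-1}(N-n)!$, only yields $O(1/N)$ after dividing by $N!$, which is too weak. The improvement to $O(1/N^2)$ rests on the fact that $T_n(M)$ is dominated by the few compositions concentrated at the boundary of the simplex, quantified by the reciprocal binomial estimate above. Assembling the pieces gives $T_n(N-n)\leq 3^{n-1}(N-n)!$, whence the sum in the statement is at most $3^{n-1}/(N(N-1)\cdots(N-n+1))\leq 3^{n-1}/(N(N-1))=O(1/N^2)$, with the implied constant depending only on $n$, as required.
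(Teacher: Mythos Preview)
Your argument is correct and follows essentially the same skeleton as the paper's proof: establish the case $n=2$ via the reciprocal-binomial estimate $\sum_b\binom{M}{b}^{-1}=O(1)$, then induct on $n$ by peeling off one coordinate. Your preliminary simplification $C(a_1,\dots,a_n)/(a_1\cdots a_n)=(a_1-1)!\cdots(a_n-1)!/N!$ streamlines the induction nicely compared with the paper's version, which keeps the connector in place and uses the extra inequality $1/(ab)\le 2/(a+b)$. One difference worth flagging: the paper's wording (``Let $n$ and $N$ be integers\dots'') and its proof make explicit that the $O$-constant is meant to be \emph{absolute}, independent of $n$ as well as $N$, whereas you produce $3^{n-1}$ and write ``depending only on $n$, as required.'' This is easy to patch---your own bound already gives $3^{n-1}(N-n)!/N!\le 3^{n-1}/\bigl((n-2)!\,N(N-1)\bigr)$ since $N\ge n$, and $3^{n-1}/(n-2)!$ is bounded over $n\ge 2$---and in any case the sole application of the lemma (the tail term at the end of the proof of Theorem~2.4) has $n$ fixed, so the non-uniform version would also suffice there.
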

\begin{proof}
By estimates $\frac{1}{ab}\leq \frac{2}{a+b}=\frac{2}{N}$ and
\[
\sum_{\substack{(a,b)\in\ZZ_{>0}^2\\a+b=N}}C(a,b)=\sum_{a=1}^{N-1}\frac{1}{\binom{N}{a}}\leq \frac{2}{N}+\sum_{a=2}^{N-2}\frac{1}{\binom{N}{2}}=\frac{2}{N}+\frac{2(N-3)}{N(N-1)}\leq \frac{4}{N},
\]
the statement for $n=2$ is true.
Next, we assume that $n\geq 3$ and the desired estimate for $n-1$ holds.
Then there exists some positive constant $c$ such that we have
\begin{align*}
\sum_{\substack{(a_1,\dots,a_n)\in\ZZ_{>0}^n\\a_1+\cdots+a_n=N}}\frac{C(a_1,\dots,a_n)}{a_1\cdots a_n}&=\sum_{a_n=1}^{N-n+1}\frac{C(N-a_n,a_n)}{a_n}\sum_{\substack{(a_1,\dots,a_{n-1})\in\ZZ_{>0}^{n-1}\\a_1+\cdots+a_{n-1}=N-a_n}}\frac{C(a_1,\dots,a_{n-1})}{a_1\cdots a_{n-1}}\\
&\leq\sum_{a_n=1}^{N-n+1}\frac{C(N-a_n,a_n)}{a_n}\cdot\frac{c}{(N-a_n)^2}\\
&\leq \frac{c}{n-1}\sum_{a_n=1}^{N-1}\frac{C(N-a_n,a_n)}{a_n(N-a_n)}\leq \frac{c}{n-1}\cdot\frac{8}{N^2}.
\end{align*}
Since $\frac{8}{n-1}\leq 1$ for $n\geq 9$, we can take a constant in the big-$O$ notation to be independent of $n$.
This proves the lemma.
\end{proof}
\begin{theorem}\label{thm:main_transport_relations}
Let $n$ and $d$ be integers satisfying $n\geq 2$ and $1\leq d\leq n$.
Let $\bk_1,\dots, \bk_d, \bl$ be indices and $\bk_{d+1},\dots, \bk_n$ non-empty indices.
Take tuples of complex numbers $\bz_i\in\DD^{\dep(\bk_i)}$ for all $1\leq i \leq n$, $\bw\in\DD^{\dep(\bl)}$ and complex numbers $v_1,\ldots,v_d\in\DD\setminus\{0\}$, $t\in \DD$ satisfying
\begin{equation}\label{eq:condition_for_variables}
\frac{1}{v_1}+\cdots+\frac{1}{v_d}=t
\end{equation}
and the assumption that if $n=d=2$ and $\bk_i=\emp$ $(1\leq i\leq 2)$, then $|v_{3-i}|<1$ or $|t|<1$.
Then we have
\begin{align*}
&\sum_{i=1}^{d}Z_n\left(\left({\bz_1 \atop \bk_1}\right)_{\overset{v_1}{\longrightarrow}};\dots;{\bz_i\atop \bk_i};\dots;\left({\bz_d \atop \bk_d}\right)_{\overset{v_d}{\longrightarrow}};\left({\bz_{d+1} \atop \bk_{d+1}}\right)_{\up};\dots; \left({\bz_{n} \atop \bk_{n}}\right)_{\up} \ \middle| \ \left({\bw \atop \bl}\right)_{\overset{t}{\longrightarrow}}\right)\\
&\quad -\sum_{i=d+1}^nZ_n\left(\left({\bz_1 \atop \bk_1}\right)_{\overset{v_1}{\longrightarrow}};\dots;\left({\bz_d \atop \bk_d}\right)_{\overset{v_d}{\longrightarrow}};\left({\bz_{d+1} \atop \bk_{d+1}}\right)_{\up};\dots;{\bz_i\atop \bk_i};\dots;\left({\bz_{n} \atop \bk_{n}}\right)_{\up} \ \middle| \ \left({\bw \atop \bl}\right)_{\overset{t}{\longrightarrow}}\right)\\
&=-Z_n\left(\left({\bz_1 \atop \bk_1}\right)_{\overset{v_1}{\longrightarrow}};\dots;\left({\bz_d \atop \bk_d}\right)_{\overset{v_d}{\longrightarrow}};\left({\bz_{d+1} \atop \bk_{d+1}}\right)_{\up};\dots;\left({\bz_{n} \atop \bk_{n}}\right)_{\up} \ \middle| \ \left({\bw \atop \bl}\right)\right).
\end{align*}
\end{theorem}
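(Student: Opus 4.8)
The plan is to prove the identity by a single, fairly mechanical series manipulation, viewing all the connected sums involved as rearrangements of one absolutely convergent multiple series. First I would record that, under the stated hypotheses, every term in the identity converges absolutely: this is Proposition~\ref{prop:fundamental_properties}\,\eqref{item:conv_n} when all blocks are non-empty, and the boundary assumptions together with Proposition~\ref{prop:fundamental_properties}\,\eqref{it:2.1-1} cover the degenerate cases. For each block $j$ write $a_j$ for the integer fed into the connector, so that $a_j=M_j$ is the new top variable created by the arrow $\overset{v_j}{\longrightarrow}$ (with $M_j>m_{r_j}^{(j)}$) when $1\le j\le d$, while $a_j=m_{r_j}^{(j)}$ when $d<j\le n$; set $Q=a_1+\cdots+a_n$. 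The interior variables of all blocks, and $q_1,\dots,q_s$ inside the $\bl$-part, occur as a common factor throughout, so the only active quantities are the connector $C(a_1,\dots,a_n)$, the arrow factors $v_j^{M_j-m_{r_j}^{(j)}}/M_j$ for $j\le d$, the factors $1/m_{r_j}^{(j)}$ produced by $\up$ for $j>d$, and the $\bl$-contribution, which enters only through $Q$.

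Two elementary identities drive the whole computation. The first is the connector recursion
\[
Q\,C(a_1,\dots,a_n)=a_j\,C(a_1,\dots,a_j-1,\dots,a_n)\qquad(1\le j\le n,\ a_j\ge1),
\]
an immediate consequence of $C(a_1,\dots,a_n)=\frac{a_j}{Q}C(a_1,\dots,a_j-1,\dots,a_n)$. The second isolates the $\overset{t}{\longrightarrow}$ arrow on the $\bl$-part. Let $L(Q)$ (resp.\ $L_0(Q)$) denote the contribution of the $\bl$-part to a summand when it carries the arrow $\overset{t}{\longrightarrow}$ (resp.\ carries no arrow), regarded as a function of $Q$; separating the summand whose innermost $\bl$-variable equals $Q$ gives
\[
L(Q)-t\,L(Q-1)=\frac{1}{Q}\,L_0(Q).
\]
I expect this $\bl$-block identity to be the conceptual core of the proof: it is the precise mechanism converting ``arrow on $\bl$'' into ``no arrow on $\bl$,'' it is where the prefactor $q_s$ in the definition of the multivariable connected sum is used, and it degenerates correctly (for $\bl=\emp$ one has $L_0\equiv0$ and both sides of the asserted identity vanish).

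With these in hand I would start from the right-hand side $-Z_n(\cdots\mid{\bw\atop\bl})$, substitute $L_0(Q)=Q\,L(Q)-Q\,t\,L(Q-1)$, and expand the two resulting pieces differently. In the $Q\,L(Q)$-piece I write $Q=\sum_{j\le d}M_j+\sum_{j>d}m_{r_j}^{(j)}$. In the $Q\,t\,L(Q-1)$-piece I use $t=\frac{1}{v_1}+\cdots+\frac{1}{v_d}$ from \eqref{eq:condition_for_variables}; for the summand indexed by $j\le d$, the factor $1/v_j$ cancels one power of $v_j$, the connector recursion rewrites $Q\,C(a_1,\dots,a_n)$ as $M_j\,C(\dots,M_j-1,\dots)$ and so absorbs the damping $1/M_j$, and the shift $M_j\mapsto M_j-1$ reindexes the sum. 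Its boundary value $M_j-1=m_{r_j}^{(j)}$ produces exactly the $i=j$ term of the first sum, while its interior values $M_j-1>m_{r_j}^{(j)}$ reproduce copies of the fully-arrowed series with the factor $1/M_j$ deleted; these interior copies cancel the $\sum_{j\le d}M_j$ part of the $Q\,L(Q)$-piece term by term. What remains is the $\sum_{j>d}m_{r_j}^{(j)}$ part of the $Q\,L(Q)$-piece, and there the product $m_{r_j}^{(j)}\cdot\frac{1}{m_{r_j}^{(j)}}=1$ simply deletes the $\up$ on block $j$, giving minus the $i=j$ term of the second sum. Collecting the survivors yields $-Z_n(\cdots\mid{\bw\atop\bl})=\sum_{i=1}^{d}(\text{first-sum term }i)-\sum_{i=d+1}^{n}(\text{second-sum term }i)$, which is the assertion.

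The hard part is making this rigorous rather than formal. The ``interior copies'' obtained after deleting $1/M_j$ are not absolutely convergent when some $|v_j|=1$, and in general neither the $Q\,L(Q)$-piece nor the $Q\,t\,L(Q-1)$-piece converges separately: decomposing $L_0$ splits a convergent series into two divergent ones. I would therefore carry out the entire manipulation inside a common truncation bounding all block variables $M_j$ and $m_i^{(j)}$ by $X$ (which automatically bounds $Q$ and hence all $\bl$-variables), so that every rearrangement is a finite, exact identity and the divergent pieces cancel before any limit is taken. The reindexing $M_j\mapsto M_j-1$ then leaves only a boundary discrepancy supported on $\{M_j=X\}$; Lemma~\ref{lem:easy-bound} (via the inequality \eqref{eq:connector ineq}) shows each such boundary sum is $o(1)$ as $X\to\infty$, while $-Z_n$ and the two sums converge to their true values by absolute convergence. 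The remaining bookkeeping is to dispose of the degenerate configurations: an empty $\bk_i$ $(i\le d)$ makes its ``no-arrow'' term a lower-depth sum via the reduction Proposition~\ref{prop:fundamental_properties}\,\eqref{item:reduction}, the non-emptiness of $\bk_{d+1},\dots,\bk_n$ guarantees that each $\up$ is a genuine weight increase (since $\emp_{\up}=\emp$), and the extra hypothesis in the case $n=d=2$ with $\bk_1=\bk_2=\emp$ is exactly what secures the absolute convergence needed for the limit.
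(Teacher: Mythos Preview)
Your proof is correct and follows essentially the same route as the paper's: the connector recursion $Q\,C(a_1,\dots,a_n)=a_j\,C(a_1,\dots,a_j-1,\dots,a_n)$ together with the constraint \eqref{eq:condition_for_variables} drives a truncated telescoping, and the boundary term is killed by the connector estimate. The only difference is organizational---the paper fixes every interior variable (including $q_s$) and truncates by $|\ba|+|\bm^+|<N$, so that Lemma~\ref{lem:easy-bound} applies verbatim to the single boundary slice $\{Q=N\}$, whereas your box truncation $M_j\le X$ places the boundary on $\{M_j=X\}$, and there you should invoke \eqref{eq:connector ineq} directly rather than Lemma~\ref{lem:easy-bound} as stated.
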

\begin{proof}
For vectors $\boldsymbol{x}=(x_1,\dots,x_d)$ and $\boldsymbol{y}=(y_1,\dots,y_d)$, we use the following notation in this proof:
\begin{align*}
\boldsymbol{x}\prec\boldsymbol{y} &\overset{\text{def}}{\Longleftrightarrow} x_j<y_j \ \text{for all} \ 1\leq j\leq d, \\ \boldsymbol{x}\prec_i\boldsymbol{y} &\overset{\text{def}}{\Longleftrightarrow} x_j<y_j \ \text{for all} \ 1\leq j\leq d, j\neq i \ \text{and} \ x_i=y_i,
\\\boldsymbol{x}\preceq_i\boldsymbol{y} &\overset{\text{def}}{\Longleftrightarrow} x_j<y_j \ \text{for all} \ 1\leq j\leq d, j\neq i \ \text{and} \ x_i\leq y_i.
\end{align*}
Then it is sufficient to show that an identity
\begin{align*}
&\frac{1}{m_{d+1}\cdots m_n}\sum_{i=1}^d\sum_{\substack{\ba\in\ZZ^d, \bm^-\prec_i\ba \\ q\leq|\ba|+|\bm^+|}}C(a_1,\dots,m_i,\dots,a_d,\bm^+)\cdot\left[\prod_{\substack{k=1 \\ k\neq i}}^d\frac{v_k^{a_k-m_k}}{a_k}\right]\cdot t^{|\ba|+|\bm^+|-q}\\
&\quad-\sum_{i=d+1}^n\frac{m_i}{m_{d+1}\cdots m_n}\sum_{\substack{\ba\in\ZZ^d, \bm^-\prec\ba \\ q\leq|\ba|+|\bm^+|}}C(a_1,\dots,a_d,\bm^+)\cdot\left[\prod_{k=1}^d\frac{v_k^{a_k-m_k}}{a_k}\right]\cdot t^{|\ba|+|\bm^+|-q}\\
&=-\frac{1}{m_{d+1}\cdots m_n}\sum_{\substack{\ba\in\ZZ^d, \bm^-\prec\ba \\ q=|\ba|+|\bm^+|}}C(a_1,\dots,a_d,\bm^+)\cdot\left[\prod_{k=1}^d\frac{v_k^{a_k-m_k}}{a_k}\right]\cdot q
\end{align*}
holds for non-negative integers $m_1,\ldots,m_d, q$ and positive integers $m_{d+1},\dots, m_n$.
Here, we abbreviated $(m_1,\dots,m_d)$ (resp.~$(m_{d+1},\dots,m_n)$) to $\bm^-$ (resp.~$\bm^+$).
We also abbreviated $(a_1,\dots, a_d)$ to $\ba$ for variables.
The symbol $|\ba|+|\bm^+|$ denotes $a_1+\cdots+a_d+m_{d+1}+\cdots+m_n$.
Note that $C(a_1,\dots,a_d,\bm^+)$ means $C(a_1,\dots,a_d,m_{d+1},\dots,m_n)$, not $C(a_1,\dots,a_d, (m_{d+1},\dots,m_n))$.
We see that each series above converges absolutely from the estimate \eqref{eq:connector ineq} or the assumption for the case where $n=d=2$.
Note that we cover the case where $t=0$ by considering $0^0$ as $1$.
For the $d=n$ case, we understand that $\bm^+=\emp$ and $m_{d+1}\cdots m_n=1$.
For the $d=1$ case, we understand the first term of the left-hand side of the above identity as 
\[
\frac{1}{m_{2}\cdots m_n}\cdot C(m_1,\dots, m_n)\cdot t^{m_1+\cdots +m_n-q}
\]
when $q\leq m_1+\cdots+m_n$ and $0$ otherwise.

Take a sufficiently large integer $N$.
For each $1\leq i\leq d$,
\begin{align*}
&\sum_{\substack{\ba\in\ZZ^d, \bm^-\prec_i\ba \\ q\leq|\ba|+|\bm^+|< N}}C(a_1,\dots,m_i,\dots,a_d,\bm^+)\cdot\left[\prod_{\substack{k=1 \\ k\neq i}}^d\frac{v_k^{a_k-m_k}}{a_k}\right]\cdot t^{|\ba|+|\bm^+|-q}\\
&=\sum_{\substack{\ba\in\ZZ^{d}, \bm^-\preceq_i\ba\\ q\leq|\ba|+|\bm^+|< N}}C(a_1,\dots,a_d,\bm^+)\cdot a_i\left[\prod_{k=1}^d\frac{v_k^{a_k-m_k}}{a_k}\right]\cdot t^{|\ba|+|\bm^+|-q}\\
&\quad\quad-\sum_{\substack{\ba\in\ZZ^{d}, \bm^-\prec\ba\\ q\leq|\ba|+|\bm^+|< N}}C(a_1,\dots,a_d,\bm^+)\cdot a_i\left[\prod_{k=1}^d\frac{v_k^{a_k-m_k}}{a_k}\right]\cdot t^{|\ba|+|\bm^+|-q}.
\end{align*}
For the first term of the right-hand side, by replacing $a_i$ with $a_i-1$,
\begin{align*}
&\sum_{\substack{\ba\in\ZZ^{d}, \bm^-\preceq_i\ba \\ q\leq|\ba|+|\bm^+|< N}}C(a_1,\dots,a_d,\bm^+)\cdot a_i\left[\prod_{k=1}^d\frac{v_k^{a_k-m_k}}{a_k}\right]\cdot t^{|\ba|+|\bm^+|-q}\\
&=\sum_{\substack{\ba\in\ZZ^{d}, \bm^-\prec\ba \\ q<|\ba|+|\bm^+|\leq N}}C(a_1,\dots,a_i-1,\dots,a_d,\bm^+)\cdot \frac{a_i}{v_i}\left[\prod_{k=1}^d\frac{v_k^{a_k-m_k}}{a_k}\right]\cdot t^{|\ba|+|\bm^+|-q-1}\\
&=\frac{1}{v_i}\sum_{\substack{\ba\in\ZZ^{d}, \bm^-\prec\ba \\ q<|\ba|+|\bm^+|\leq N}}(|\ba|+|\bm^+|)\cdot C(a_1,\dots,a_d,\bm^+)\cdot\left[\prod_{k=1}^d\frac{v_k^{a_k-m_k}}{a_k}\right]\cdot t^{|\ba|+|\bm^+|-q-1}.
\end{align*}
By summing from $i=1$ to $d$ and using \eqref{eq:condition_for_variables}, we have
\begin{align*}
&\sum_{i=1}^d\sum_{\substack{\ba\in\ZZ^{d}, \bm^-\preceq_i\ba \\ q\leq|\ba|+|\bm^+|< N}}C(a_1,\dots,a_d,\bm^+)\cdot a_i\left[\prod_{k=1}^d\frac{v_k^{a_k-m_k}}{a_k}\right]\cdot t^{|\ba|+|\bm^+|-q}\\
&=\sum_{\substack{\ba\in\ZZ^{d}, \bm^-\prec\ba \\ q<|\ba|+|\bm^+|\leq N}}(|\ba|+|\bm^+|)\cdot C(a_1,\dots,a_d,\bm^+)\cdot\left[\prod_{k=1}^d\frac{v_k^{a_k-m_k}}{a_k}\right]\cdot t^{|\ba|+|\bm^+|-q}.
\end{align*}
On the remaining terms,
\begin{align*}
&\frac{1}{m_{d+1}\cdots m_n}\sum_{i=1}^d\sum_{\substack{\ba\in\ZZ^{d}, \bm^-\prec\ba \\ q\leq|\ba|+|\bm^+|< N}}C(a_1,\dots,a_d,\bm^+)\cdot a_i\left[\prod_{k=1}^d\frac{v_k^{a_k-m_k}}{a_k}\right]\cdot t^{|\ba|+|\bm^+|-q}\\
&\quad+\sum_{i=d+1}^n\frac{m_i}{m_{d+1}\cdots m_n}\sum_{\substack{\ba\in\ZZ^{d}, \bm^-\prec\ba \\ q\leq|\ba|+|\bm^+|< N}}C(a_1,\dots,a_d,\bm^+)\cdot\left[\prod_{k=1}^d\frac{v_k^{a_k-m_k}}{a_k}\right]\cdot t^{|\ba|+|\bm^+|-q}\\
&=\frac{1}{m_{d+1}\cdots m_n}\sum_{\substack{\ba\in\ZZ^{d}, \bm^-\prec\ba \\ q\leq|\ba|+|\bm^+|< N}}(|\ba|+|\bm^+|)\cdot C(a_1,\dots,a_d,\bm^+)\cdot\left[\prod_{k=1}^d\frac{v_k^{a_k-m_k}}{a_k}\right]\cdot t^{|\ba|+|\bm^+|-q}.
\end{align*}
By combining these calculations, we have
\begin{align*}
&\frac{1}{m_{d+1}\cdots m_n}\sum_{i=1}^d\sum_{\substack{\ba\in\ZZ^{d}, \bm^-\prec_i\ba \\ q\leq|\ba|+|\bm^+|< N}}C(a_1,\dots,m_i,\dots,a_d,\bm^+)\cdot\left[\prod_{\substack{k=1 \\ k\neq i}}^d\frac{v_k^{a_k-m_k}}{a_k}\right]\cdot t^{|\ba|+|\bm^+|-q}\\
&\quad-\sum_{i=d+1}^n\frac{m_i}{m_{d+1}\cdots m_n}\sum_{\substack{\ba\in\ZZ^{d}, \bm^-\prec\ba \\ q\leq|\ba|+|\bm^+|< N}}C(a_1,\dots,a_d,\bm^+)\cdot\left[\prod_{k=1}^d\frac{v_k^{a_k-m_k}}{a_k}\right]\cdot t^{|\ba|+|\bm^+|-q}\\
&=\frac{1}{m_{d+1}\cdots m_n}\sum_{\substack{\ba\in\ZZ^{d}, \bm^-\prec\ba \\ q<|\ba|+|\bm^+|\leq N}}(|\ba|+|\bm^+|)\cdot C(a_1,\dots,a_d,\bm^+)\cdot\left[\prod_{k=1}^d\frac{v_k^{a_k-m_k}}{a_k}\right]\cdot t^{|\ba|+|\bm^+|-q}\\
&\quad\quad-\frac{1}{m_{d+1}\cdots m_n}\sum_{\substack{\ba\in\ZZ^{d}, \bm^-\prec\ba \\ q\leq|\ba|+|\bm^+|< N}}(|\ba|+|\bm^+|)\cdot C(a_1,\dots,a_d,\bm^+)\cdot\left[\prod_{k=1}^d\frac{v_k^{a_k-m_k}}{a_k}\right]\cdot t^{|\ba|+|\bm^+|-q}\\
&=-\frac{1}{m_{d+1}\cdots m_n}\sum_{\substack{\ba\in\ZZ^{d}, \bm^-\prec\ba \\ q=|\ba|+|\bm^+|}}C(a_1,\dots,a_d,\bm^+)\cdot\left[\prod_{k=1}^d\frac{v_k^{a_k-m_k}}{a_k}\right]\cdot q\\
&\quad\quad+\frac{N}{m_{d+1}\cdots m_n}\sum_{\substack{\ba\in\ZZ^{d}, \bm^-\prec\ba \\ |\ba|+|\bm^+|=N}}C(a_1,\dots,a_d,\bm^+)\cdot\left[\prod_{k=1}^d\frac{v_k^{a_k-m_k}}{a_k}\right]\cdot t^{N-q}.
\end{align*}
The last term tends to $0$ as $N$ tends to infinity by Lemma~\ref{lem:easy-bound}. This completes the proof.
\end{proof}
\section{Transport relations}\label{sec:transport relations}
Let $m$ be a positive integer.
For $t\in\DD$, we define $\BB_m(t)$ by
\[
\BB_m(t)\coloneqq\left\{(v_1,\dots,v_m)\in((\DD\setminus\{0\})\cup\{\infty\})^m \ \middle| \ \sum_{i=1}^m\frac{1}{v_i}=t \text{ or } \left|t-\sum_{i=1}^m\frac{1}{v_i}\right|\geq 1\right\}.
\]
For example, $\BB_1(1)=\{z\in\DD\setminus\{0\} \mid \mathrm{Re}(z)\leq \frac{1}{2}\}\cup\{1,\infty\}$, where $\mathrm{Re}(z)$ denotes the real part of $z$.

Let $n$ be an integer greater than $1$.
Let $\bk_1,\dots,\bk_n, \bl$ be indices and $\bz_1\in\DD^{\dep(\bk_1)},\dots,\bz_n\in\DD^{\dep(\bk_n)},\bw\in\DD^{\dep(\bl)}$.
Let $t\in \DD$ and $(v_1,\dots,v_{n-1})\in\BB_{n-1}(t)$.
We assume that
\begin{itemize}
\item if $\bk_i=\emp$ ($1\leq i \leq n-1$), then $v_i\neq\infty$,
\item if $\bk_n=\emp$, then $\sum_{i=1}^{n-1}\frac{1}{v_i}\neq t$,
\item if $n=2$, $\bk_1=\emp$ and $|t|=1$, then $\left|t-\frac{1}{v_1}\right|\neq 1$ and
\item if $n=2$, $\bk_2=\emp$ and $|t|=1$, then $|v_1|\neq 1$.
\end{itemize}
In this setting, by the fundamental identity, we have
\begin{equation}\label{eq:main transport relation}
\begin{split}
&Z_n\left(\left({\bz_1\atop\bk_1}\right)_{\overset{v_1}{\longrightarrow}};\dots;\left({\bz_{n-1}\atop\bk_{n-1}}\right)_{\overset{v_{n-1}}{\longrightarrow}};{\bz_n\atop\bk_n} \ \middle| \ \left({\bw\atop\bl}\right)_{\overset{t}{\longrightarrow}}\right)\\
&=-\sum_{i=1}^{n-1}Z_n\left(\left({\bz_1\atop\bk_1}\right)_{\overset{v_1}{\longrightarrow}};\dots;{\bz_i\atop\bk_i};\dots;\left({\bz_n\atop\bk_n}\right)_{\overset{v_n}{\longrightarrow}} \ \middle| \ \left({\bw\atop\bl}\right)_{\overset{t}{\longrightarrow}}\right)\\
&\quad-Z_n\left(\left({\bz_1\atop\bk_1}\right)_{\overset{v_1}{\longrightarrow}};\dots;\left({\bz_{n}\atop\bk_{n}}\right)_{\overset{v_{n}}{\longrightarrow}} \ \middle| \ {\bw\atop\bl}\right),
\end{split}
\end{equation}
where $v_n\in(\DD\setminus\{0\})\cup\{\infty\}$ is defined by $\frac{1}{v_n}=t-\sum_{i=1}^{n-1}\frac{1}{v_i}$.
In this equality, the total weight of indices (including the arrow parts) except for the $n$th index of each multivariable connected sum appearing in the right-hand side is $1$ less than the total weight of indices except for the $n$th index of the multivariable connected sum  in the left-hand side.
Equality~\eqref{eq:main transport relation} is the same as the fundamental identity, but when it is used for the purpose of transforming the left-hand side into the right-hand side, we also call it a \emph{transport relation}.
If we classify transport relations for $Z_n$ according to whether the actual direction of each of $\overset{v_1}{\rightarrow}$, $\dots,\overset{v_{n-1}}{\rightarrow}$ and $\overset{t}{\rightarrow}$ is vertical or horizontal, that is, whether each of $v_1,\dots,v_{n-1}$ (resp.~$t$) is the infinity (resp.~$0$) or not, there are $2^n$ different patterns of transport relations; see Example~\ref{ex:Z_2_gen} for instance.
\subsection{Examples}
The settings of $\bk_1,\bk_2,\bl,\bz_1,\bz_2$ and $\bw$ are as per those at the beginning of this section (the case of $n=2$).
\begin{example}[Transport relations for $Z_2$]\label{ex:Z_2_gen}
Let $t\in\DD$ and $v_1\in \BB_1(t)$.
Assume that if $\bk_2=\emp$, then $t\neq \frac{1}{v_1}$, and if $\bk_1=\emp$ (resp.~$\bk_2=\emp$) and $|t|=1$, then $\left|t-\frac{1}{v_1}\right|\neq 1$ (resp.~$|v_1|\neq 1$).
We define $v_2\in(\DD\setminus\{0\})\cup\{\infty\}$ by $\frac{1}{v_2}=t-\frac{1}{v_1}$.
\begin{enumerate}
\item\label{it:Ex3.1-1} The case of $v_1\neq\infty$, $t\neq0$:
\begin{align*}
&Z_2\left(\left({\bz_1\atop\bk_1}\right)_{\overset{v_1}{\longrightarrow}};{\bz_2\atop\bk_2} \ \middle| \ \left({\bw\atop\bl}\right)_{\overset{t}{\longrightarrow}}\right)\\
&=-Z_2\left({\bz_1\atop\bk_1};\left({\bz_2\atop\bk_2}\right)_{\overset{v_2}{\longrightarrow}} \ \middle| \ \left({\bw\atop\bl}\right)_{\overset{t}{\longrightarrow}}\right)-Z_2\left(\left({\bz_1\atop\bk_1}\right)_{\overset{v_1}{\longrightarrow}};\left({\bz_2\atop\bk_2}\right)_{\overset{v_2}{\longrightarrow}} \ \middle| \ {\bw\atop\bl}\right).
\end{align*}
\item\label{it:Ex3.1-2} The case of $v_1=\infty$, $t\neq 0$, $\bk_1\neq \emp$ ($v_2=1/t$):
\begin{align*}
&Z_2\left(\left({\bz_1\atop\bk_1}\right)_{\up};{\bz_2\atop\bk_2} \ \middle| \ \left({\bw\atop\bl}\right)_{\overset{t}{\longrightarrow}}\right)\\
&=Z_2\left({\bz_1\atop\bk_1};\left({\bz_2\atop\bk_2}\right)_{\overset{1/t}{\longrightarrow}} \ \middle| \ \left({\bw\atop\bl}\right)_{\overset{t}{\longrightarrow}}\right)-Z_2\left(\left({\bz_1\atop\bk_1}\right)_{\up};\left({\bz_2\atop\bk_2}\right)_{\overset{1/t}{\longrightarrow}} \ \middle| \ {\bw\atop\bl}\right).
\end{align*}
\item\label{it:Ex3.1-3} The case of $v_1\neq \infty$, $t=0$ ($v_2=-v_1$):
\begin{align*}
&Z_2\left(\left({\bz_1\atop\bk_1}\right)_{\overset{v_1}{\longrightarrow}};{\bz_2\atop\bk_2} \ \middle| \ \left({\bw\atop\bl}\right)_{\up}\right)\\
&=-Z_2\left({\bz_1\atop\bk_1};\left({\bz_2\atop\bk_2}\right)_{\overset{-v_1}{\longrightarrow}} \ \middle| \ \left({\bw\atop\bl}\right)_{\up}\right)-Z_2\left(\left({\bz_1\atop\bk_1}\right)_{\overset{v_1}{\longrightarrow}};\left({\bz_2\atop\bk_2}\right)_{\overset{-v_1}{\longrightarrow}} \ \middle| \ {\bw\atop\bl}\right).
\end{align*}
\item\label{it:Ex3.1-4} The case of $v_1=\infty$, $t=0$, $\bk_1\neq\emp$ ($v_2=\infty$):
\begin{align*}
&Z_2\left(\left({\bz_1\atop\bk_1}\right)_{\up};{\bz_2\atop\bk_2} \ \middle| \ \left({\bw\atop\bl}\right)_{\up}\right)\\
&=-Z_2\left({\bz_1\atop\bk_1};\left({\bz_2\atop\bk_2}\right)_{\up} \ \middle| \ \left({\bw\atop\bl}\right)_{\up}\right)+Z_2\left(\left({\bz_1\atop\bk_1}\right)_{\up};\left({\bz_2\atop\bk_2}\right)_{\up} \ \middle| \ {\bw\atop\bl}\right).
\end{align*}
\end{enumerate}
\end{example}
\begin{example}[Transport relations for duality]\label{ex:tranport relation for duality}
We consider a special case of Example~\ref{ex:Z_2_gen}: the case where $\bl=\emp$, $t=1$.
In the case of Example~\ref{ex:Z_2_gen} \eqref{it:Ex3.1-1}, we further classify it by whether the actual direction of $\overset{v_2}{\longrightarrow}$ is vertical or horizontal.
Let $v\in\DD\setminus\{0\}$ satisfying $\mathrm{Re}(v)\leq \frac{1}{2}$.
Assume that if $\bk_1=\emp$, then $\mathrm{Re}(v)\neq \frac{1}{2}$ and if $\bk_2=\emp$, then $|v|\neq 1$.
Then we have
\begin{align*}
&Z_2\left(\left({\bz_1\atop\bk_1}\right)_{\overset{v}{\longrightarrow}};{\bz_2\atop\bk_2}\right)=-Z_2\left({\bz_1\atop\bk_1};\left({\bz_2\atop\bk_2}\right)_{\overset{\frac{v}{v-1}}{\longrightarrow}}\right),\\
&Z_2\left(\left({\bz_1\atop\bk_1}\right)_{\overset{1}{\longrightarrow}};{\bz_2\atop\bk_2}\right)=Z_2\left({\bz_1\atop\bk_1};\left({\bz_2\atop\bk_2}\right)_{\up}\right)\quad (\bk_2\neq\emp),\\
&Z_2\left(\left({\bz_1\atop\bk_1}\right)_{\up};{\bz_2\atop\bk_2}\right)=Z_2\left({\bz_1\atop\bk_1};\left({\bz_2\atop\bk_2}\right)_{\overset{1}{\longrightarrow}}\right)\quad (\bk_1\neq\emp).
\end{align*}
These transport relations will be used to reprove the duality relation for MPLs in the next subsection, and recover those of Seki--Yamamoto \eqref{eq:SY-trans} by considering the case $\bz_1=(\{1\}^{\dep(\bk_1)})$ and $\bz_2=(\{1\}^{\dep(\bk_2)})$.
\end{example}
\begin{example}[Transport relations for special cases of $Z_3$]\label{ex:tranport relation for Z_3}
In the general case of $Z_3$, there are 8 patterns of transport relations, considered in the same way as in Example~\ref{ex:Z_2_gen}.
Here, we focus on transport relations restricted to the case where it is necessary to calculate the multiple connected sum $Z_3(\bk_1;\bk_2;\bk_3)$.
Let $\bk_3$ be an index and $\ee$ an element of $\{1,-1\}^{\dep(\bk_3)}$.
If the tuple of complex variables to be written onto an index has the form $(1,\dots,1)$, then  we omit the tuple from our notation.
In this setting, we have
\begin{enumerate}
\item\label{it:Ex3.3-1} The case of $(v_1,v_2,v_3)=(1,1,-1)$:
\[
Z_3\left((\bk_1)_{\to};(\bk_2)_{\to};{\ee\atop\bk_3}\right)=-Z_3\left(\bk_1;(\bk_2)_{\to};\left({\ee\atop\bk_3}\right)_{\overset{-1}{\longrightarrow}}\right)-Z_3\left((\bk_1)_{\to};\bk_2;\left({\ee\atop\bk_3}\right)_{\overset{-1}{\longrightarrow}}\right).
\]
\item\label{it:Ex3.3-2} The case of $(v_1,v_2,v_3)=(1,\infty,\infty)$, $\bk_2,\bk_3\neq\emp$:
\[
Z_3\left((\bk_1)_{\to};(\bk_2)_{\up};{\ee\atop\bk_3}\right)=Z_3\left(\bk_1;(\bk_2)_{\up};\left({\ee\atop\bk_3}\right)_{\up}\right)-Z_3\left((\bk_1)_{\to};\bk_2;\left({\ee\atop\bk_3}\right)_{\up}\right).
\]
\item\label{it:Ex3.3-3} The case of $(v_1,v_2,v_3)=(\infty,1,\infty)$, $\bk_1,\bk_3\neq\emp$:
\[
Z_3\left((\bk_1)_{\up};(\bk_2)_{\to};{\ee\atop\bk_3}\right)=-Z_3\left(\bk_1;(\bk_2)_{\to};\left({\ee\atop\bk_3}\right)_{\up}\right)+Z_3\left((\bk_1)_{\up};\bk_2;\left({\ee\atop\bk_3}\right)_{\up}\right).
\]
\item\label{it:Ex3.3-4} The case of $(v_1,v_2,v_3)=(\infty,\infty,1)$, $\bk_1,\bk_2\neq\emp$:
\[
Z_3\left((\bk_1)_{\up};(\bk_2)_{\up};{\ee\atop\bk_3}\right)=Z_3\left(\bk_1;(\bk_2)_{\up};\left({\ee\atop\bk_3}\right)_{\overset{1}{\longrightarrow}}\right)+Z_3\left((\bk_1)_{\up};\bk_2;\left({\ee\atop\bk_3}\right)_{\overset{1}{\longrightarrow}}\right).
\]
\end{enumerate}
In particular, if all $\bk_1, \bk_2$ and $\bk_3$ are not empty, then we see that $Z_3(\bk_1;\bk_2;\bk_3)$ belongs to the space of alternating multiple zeta values, which will be defined below, by the recipe explained later in \S\ref{subsec:recipe for evaluations}.
\end{example}
\subsection{Duality for multiple polylogarithms}\label{subsec:duality for MPL}
Prior to \S\ref{sec:Relations}, we prove the duality for MPLs by using transport relations in Example~\ref{ex:tranport relation for duality}, because of its simplicity.

Let $\bk$ be an index and $\bz=(z_1,\dots,z_r)\in\DD^r$, where $r\coloneqq\dep(\bk)$.
Further, we assume that  all of the following are true:
\begin{itemize}
\item $z_1,\dots,z_r\in \BB_1(1)$,
\item $\mathrm{Re}(z_1)\neq \frac{1}{2}$,
\item if $\bk$ is non-admissible, then $|z_r|\neq 1$.
\end{itemize}
We refer this condition as the \emph{dual condition}.
For such a pair $\left({\bz\atop\bk}\right)$, by using transport relations in Example~\ref{ex:tranport relation for duality}, we can transport $\left({\bz\atop\bk}\right)$ from left to right in the absolutely convergent multivariable connected sum $Z_2$ as
\begin{equation}\label{eq:Z_2 duality and invertible index}
Z_2\left({\bz\atop\bk};{\emp\atop\emp}\right)=\cdots=(-1)^{\iota(\bz)}Z_2\left({\emp\atop\emp};\left({\bz\atop\bk}\right)^{\dagger}\right),
\end{equation}
where $\iota(\bz)=r-$(the number of $1$'s in $\bz$).
Here, we define a pair $\left({\bz\atop\bk}\right)^{\dagger}$ satisfying the dual condition by this transportation.
We also write an explicit definition.
Recall the definition of the dual index of an admissible index in \S\ref{sec:Intro}.
Any pair $\left({\bz\atop\bk}\right)$ with the dual condition is uniquely expressed as
\[
\left({\bz\atop\bk}\right)=\left({\{1\}^{r_1},\atop\bl_1,}{\{1\}^{a_1-1},\atop\{1\}^{a_1-1},}{w_1\atop b_1}{,\dots,\atop,\dots,}{\{1\}^{r_d},\atop\bl_d,}{\{1\}^{a_d-1},\atop\{1\}^{a_d-1},}{w_d,\atop b_d,}{\{1\}^{r_{d+1}}\atop\bl_{d+1}}\right),
\]
where $d$ is a non-negative integer, $a_1,\dots, a_d$, $b_1,\dots, b_d$ are positive integers, all $w_1,\dots, w_d$ are not $1$, $\bl_1,\dots,\bl_{d+1}$ are admissible indices, and $r_1\coloneqq\dep(\bl_1),\dots, r_{d+1}\coloneqq\dep(\bl_{d+1})$.
Then $\left({\bz\atop\bk}\right)^{\dagger}$ is defined by
\[
\left({\bz\atop\bk}\right)^{\dagger}=\left({\{1\}^{s_{d+1}},\atop(\bl_{d+1})^{\dagger},}{\{1\}^{b_d-1},\atop\{1\}^{b_d-1},}{\frac{w_d}{w_d-1},\atop a_d,}{\{1\}^{s_d}\atop(\bl_d)^{\dagger}}{,\dots,\atop,\dots,}{\{1\}^{b_1-1},\atop\{1\}^{b_1-1},}{\frac{w_1}{w_1-1},\atop a_1,}{\{1\}^{s_{1}}\atop(\bl_{1})^{\dagger}}\right),
\]
where $s_1\coloneqq\dep((\bl_1)^{\dagger}),\dots, s_{d+1}\coloneqq\dep((\bl_{d+1})^{\dagger})$.
In particular, we have $\left({\{1\}^r \atop\bk}\right)^{\dagger}=\left({\{1\}^{s}\atop\bk^{\dagger}}\right)$, where $\bk$ is an admissible index and $r\coloneqq\dep(\bk)$, $s\coloneqq\dep(\bk^{\dagger})$.
The transportation \eqref{eq:Z_2 duality and invertible index} with Proposition~\ref{prop:fundamental_properties} \eqref{it:2.1-1} and \eqref{item:reduction} gives the following duality relation which was proved by Borwein--Bradley--Broadhurst--Lison\v{e}k in \cite[\S6.1]{Borwein-Bradley-Broadhurst-Lisonek} using a change of variables for iterated integrals; here, we state only the case for absolutely convergent MPLs.
\begin{theorem}\label{thm:duality}
Let $\bk$ be an index and $\bz\in\DD^{\dep(\bk)}$.
We assume that the pair $\left({\bz\atop\bk}\right)$ satisfies the dual condition and write $\left({\bz\atop\bk}\right)^{\dagger}$ as $\bigl({\bz'\atop\bk'}\bigr)$.
Then we have
\[
\Li_{\bk}(\bz)=(-1)^{\iota(\bz)}\Li_{\bk'}(\bz').
\]
\end{theorem}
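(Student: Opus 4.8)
The plan is to read the statement off from the transportation identity~\eqref{eq:Z_2 duality and invertible index} by collapsing the connected sums at its two ends to ordinary multiple polylogarithms. By Proposition~\ref{prop:fundamental_properties}\eqref{item:reduction}, the empty second slot gives $Z_2\left({\bz\atop\bk};{\emp\atop\emp}\right)=Z_1\left({\bz\atop\bk}\right)$ and the empty first slot gives $Z_2\left({\emp\atop\emp};\left({\bz'\atop\bk'}\right)\right)=Z_1\left({\bz'\atop\bk'}\right)$. The dual condition forces $|z_r|<1$ whenever $\bk$ is non-admissible, and since it is preserved under $\left({\bz\atop\bk}\right)\mapsto\left({\bz'\atop\bk'}\right)$ the same holds for the dual pair; hence the convergence hypothesis of Proposition~\ref{prop:fundamental_properties}\eqref{it:2.1-1} is met at both ends and the two $Z_1$-values equal $\Li_{\bk}(\bz)$ and $\Li_{\bk'}(\bz')$. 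Substituting these into~\eqref{eq:Z_2 duality and invertible index} yields $\Li_{\bk}(\bz)=(-1)^{\iota(\bz)}\Li_{\bk'}(\bz')$, which is the assertion.

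The real content is therefore the transportation~\eqref{eq:Z_2 duality and invertible index} itself, which I would establish by iterating the three relations of Example~\ref{ex:tranport relation for duality} so as to migrate $\left({\bz\atop\bk}\right)$ one letter at a time out of the first slot and into the second. Writing $\left({\bz\atop\bk}\right)$ in the block form displayed just before the theorem, I would peel its rightmost entry repeatedly: an entry of index value $b$ carrying a non-unit variable $w$ is first lowered to value $1$ by $b-1$ applications of the third relation ($\up\mapsto\overset{1}{\longrightarrow}$) and then removed by the first relation, which is the only one introducing a sign and which simultaneously performs the substitution $w\mapsto w/(w-1)$; every entry whose variable equals $1$ is removed by the second and third relations without any sign, exactly as in the Seki--Yamamoto proof of duality for multiple zeta values. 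Each step strictly decreases $\wt(\bk)+\dep(\bk)$, so by induction the procedure terminates with the first slot empty, and tracking the resulting reversal-and-complement of the block structure shows that the second slot has become precisely $\left({\bz\atop\bk}\right)^{\dagger}=\left({\bz'\atop\bk'}\right)$ given by the explicit formula.

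I expect the sign and convergence bookkeeping to be the main obstacle. For the sign, the sign-producing moves are exactly the first-relation applications, one per non-unit variable $w_j$; since $\iota(\bz)$ counts precisely the non-unit entries of $\bz$, their product is $(-1)^{\iota(\bz)}$, matching the claimed factor. For convergence and applicability, every intermediate connected sum must converge and each relation must be legitimately invokable, so the constraints $z_i\in\BB_1(1)$, $\mathrm{Re}(z_1)\neq\tfrac12$ and the admissibility/modulus conditions of Example~\ref{ex:tranport relation for duality} must persist along the whole chain. I would verify this by showing that a single transport move sends a dual-condition pair to a dual-condition pair; the key computation is that $w\mapsto w/(w-1)$ is an involution of $\{z\in\DD : \mathrm{Re}(z)\le\tfrac12\}$ onto itself (indeed $|w/(w-1)|\le 1\iff\mathrm{Re}(w)\le\tfrac12$), so the transported variables again lie in $\BB_1(1)$. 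With the dual condition thus maintained, Proposition~\ref{prop:fundamental_properties}\eqref{item:conv_n} guarantees absolute convergence of every intermediate two-slot sum, and each relation applies at each step.
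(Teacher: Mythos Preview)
Your proposal is correct and follows essentially the same approach as the paper: the paper derives Theorem~\ref{thm:duality} by exactly the combination you describe, namely the transportation~\eqref{eq:Z_2 duality and invertible index} obtained by iterating the three relations of Example~\ref{ex:tranport relation for duality}, together with Proposition~\ref{prop:fundamental_properties}\eqref{it:2.1-1} and~\eqref{item:reduction} at the two endpoints. Your additional bookkeeping (sign count via the non-unit variables, preservation of the dual condition under $w\mapsto w/(w-1)$, and absolute convergence of intermediate $Z_2$-sums via Proposition~\ref{prop:fundamental_properties}\eqref{item:conv_n}) merely spells out what the paper leaves implicit.
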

Here, we recall the definition of the \emph{alternating multiple zeta value}.
For a non-empty index $\bk=(k_1,\dots,k_r)$ and $\ee=(\epsilon_1,\dots,\epsilon_r)\in\{1,-1\}^r$ satisfying $(k_r,\epsilon_r)\neq (1,1)$, we define $\zeta(\bk;\ee)$ by
\[
\zeta(\bk;\ee)\coloneqq\sum_{0<m_1<\cdots<m_r}\frac{\epsilon_1^{m_1}\cdots \epsilon_r^{m_r}}{m_1^{k_1}\cdots m_r^{k_r}}.
\]
We use bar notation for indicating signs; e.g.~$\zeta(\overline{1},2,\overline{3},4)=\zeta(1,2,3,4;-1,1,-1,1)$. 
\begin{example}
The identity
\[
\zeta(\overline{1},2)=-\Li_{2,1}\left(1,\frac{1}{2}\right)
\]
is deduced from
\[
Z_2\left({-1,1\atop \phantom{-}1,2};{\emp\atop\emp}\right)=Z_2\left({-1,1\atop \phantom{-}1,1};{1\atop1}\right)=Z_2\left({-1\atop \phantom{-}1};{1\atop 2}\right)=-Z_2\left({\emp\atop\emp};{1,\frac{1}{2}\atop 2,1}\right).
\]
Borwein, Bradley, Broadhurst and Lison\v{e}k remarked that this is ``a result that would doubtless be difficult to prove by na\"ive series manipulations alone'' in \cite[Example~6.1]{Borwein-Bradley-Broadhurst-Lisonek}.
However, our proof is contrary to their prediction; if we state the proof of Theorem~\ref{thm:main_transport_relations} in this case, then it is quite simple.
\end{example}
\begin{example}
Let $r$ be a positive integer.
Take $(z_1,\dots,z_r)\in\left(\BB_1(1)\setminus\{\infty\}\right)^r$ satisfying $\mathrm{Re}(z_1)\neq\frac{1}{2}$ and $|z_r|\neq 1$.
Then the identity
\[
\Li_{\{1\}^r}(z_1,\dots,z_r)=(-1)^r\Li_{\{1\}^r}\left(\frac{z_r}{z_r-1},\dots,\frac{z_1}{z_1-1}\right)
\]
is deduced from
\[
Z_2\left({z_1,\dots,z_r\atop 1,\dots, 1};{\emp\atop\emp}\right)=-Z_2\left({z_1\atop 1} {,\dots,\atop,\dots,} {z_{r-1}\atop 1};{\frac{z_r}{z_r-1}\atop1}\right)=\cdots=(-1)^rZ_2\left({\emp\atop\emp};{\frac{z_r}{z_r-1},\dots,\frac{z_1}{z_1-1}\atop 1,\dots, 1}\right).
\]
\end{example}
\section{Boundary condition}\label{sec:boundary condtions}
We call the following fact the \emph{boundary condition} for multivariable connected sums: every absolutely convergent $Z_1$-value is written explicitly as a $\ZZ$-linear combination of absolutely convergent shuffle-type MPLs.
We explain this fact in detail.

Let $r$ be a positive integer, $\bk=(k_1,\dots,k_r)$ an index and $\xi_1,\dots, \xi_r$ complex numbers satisfying $\left|\prod_{i=j}^r\xi_i\right|\leq 1$ for all $1\leq j\leq r$ and that if $k_r=1$, then $|\xi_r|<1$.
Then the \emph{harmonic-type multiple polylogarithm} $\mathrm{Li}^*_{\bk}(\xi_1,\dots,\xi_r)$ is defined by
\[
\mathrm{Li}^*_{\bk}(\xi_1,\dots,\xi_r)\coloneqq\sum_{0<m_1<\cdots<m_r}\frac{\xi_1^{m_1}\cdots \xi_r^{m_r}}{m_1^{k_1}\cdots m_r^{k_r}}.
\]
Let $\bz=(z_1,\dots,z_r)\in (\DD\setminus\{0\})^r$ satisfying that if $\bk$ is non-admissible, then $|z_r|<1$. 
By definition, there are the following relationships between $\mathrm{Li}^*_{\bk}$ and $\Li_{\bk}$:
\begin{align}
\Li_{\bk}(z_1,\dots,z_r)&=\mathrm{Li}^*_{\bk}\left(\frac{z_1}{z_2},\dots,\frac{z_{r-1}}{z_r},z_r\right),\\
\mathrm{Li}_{\bk}^*(\xi_1,\dots,\xi_r)&=\Li_{\bk}\left(\prod_{i=1}^r\xi_i,\prod_{i=2}^r\xi_i, \dots,\xi_{r-1}\xi_r, \xi_r\right).\label{eq:stuffle <-> shuffle}
\end{align}
Furthermore, let $s$ be a positive integer, $\bl=(l_1,\dots,l_s)$ an index and $\bw=(w_1,\dots w_s)\in (\DD\setminus\{0\})^s$.
We replace the assumption about $z_r$ with the following: if $k_r=l_s=1$, then $|z_r|<1$ or $|w_s|<1$.
By definition, we have
\begin{multline*}
Z_1\left({\bz\atop\bk} \ \middle| \ {\bw\atop\bl}\right)=\sum_{n=1}^{\infty}\left(\sum_{0<m_1<\cdots<m_{r-1}<n}\frac{\left(\frac{z_1}{z_2}\right)^{m_1}\cdots\left(\frac{z_{r-1}}{z_r}\right)^{m_{r-1}}}{m_1^{k_1}\cdots m_{r-1}^{k_{r-1}}}\right)\\
\times \left(\sum_{1\leq q_1\leq \cdots \leq q_{s-1}\leq n}\frac{\left(\frac{w_1}{w_2}\right)^{q_1}\cdots \left(\frac{w_{s-1}}{w_s}\right)^{q_{s-1}}}{q_1^{l_1}\cdots q_{s-1}^{l_{s-1}}}\right)\frac{(z_rw_s)^n}{n^{k_r+l_s-1}}.
\end{multline*}
Since the product of truncated sums for a fixed $n$ obeys the harmonic product rule, $Z_1\left({\bz\atop\bk} \ \middle| \ {\bw\atop\bl}\right)$ is written as a $\ZZ$-linear combination of harmonic-type MPLs, and hence of shuffle-type by \eqref{eq:stuffle <-> shuffle}.
For example, we have
\begin{align*}
&Z_1\left({z_1,z_2\atop 1,1} \ \middle| \ {w_1,w_2\atop 1,2}\right)\\
&=\sum_{n=1}^{\infty}\left(\sum_{0<m_1<n}\frac{\left(\frac{z_1}{z_2}\right)^{m_1}}{m_1}\right)\left(\sum_{0<q_1<n}\frac{\left(\frac{w_1}{w_2}\right)^{q_1}}{q_1}+\frac{\left(\frac{w_1}{w_2}\right)^n}{n}\right)\frac{(z_2w_2)^n}{n^2}
\\
&=\mathrm{Li}_{1,1,2}^*\left(\frac{z_1}{z_2},\frac{w_1}{w_2},z_2w_2\right)+\mathrm{Li}_{1,1,2}^*\left(\frac{w_1}{w_2},\frac{z_1}{z_2},z_2w_2\right)+\mathrm{Li}_{2,2}^*\left(\frac{z_1w_1}{z_2w_2},z_2w_2\right)+\mathrm{Li}_{1,3}^*\left(\frac{z_1}{z_2},z_2w_1\right)\\
&=\Li_{1,1,2}(z_1w_1,z_2w_1,z_2w_2)+\Li_{1,1,2}(z_1w_1,z_1w_2,z_2w_2)+\Li_{2,2}(z_1w_1,z_2w_2)+\Li_{1,3}(z_1w_1,z_2w_1).
\end{align*}
\begin{remark}
If $z_1=\cdots=z_r=w_1=\cdots=w_s=1$, then another decomposition approach using $2$-posets and associated integrals is attributable to Kaneko and Yamamoto; see \cite[\S4]{Kaneko-Yamamoto}.
\end{remark}
\section{Evaluations of the multivariable connected sum}\label{sec:Evaluations}
\subsection{Recipe for evaluations}\label{subsec:recipe for evaluations}
First, we introduce the notion of transportable variables.
This is just a condition to complete all transportation process while maintaining absolute convergence of multivariable connected sums.
We use the notation $[n]\coloneqq \{1,\dots,n\}$ for a positive integer $n$.
\begin{definition}\label{def:transportable}
Let $r_1,\dots, r_n$ and $s$ be positive integers, where $n$ is an integer at least $2$.
Let $\bz_1=(z_1^{(1)},\dots,z_{r_1}^{(1)}),\dots,\bz_n=(z_1^{(n)},\dots,z_{r_n}^{(n)})$ and $\bw=(w_1,\dots, w_s)$ be elements of $\DD^{r_1}, \dots, \DD^{r_n}$ and $\DD^{s}$, respectively.
We declare that $(\bz_1,\dots,\bz_n;\bw)$ is \emph{transportable} if there exists $j\in[n]$ such that for any non-empty subset $J\subset[n]\setminus\{j\}$, the following holds true:
\begin{itemize}
\item For every $(a_i)_{i\in J}$ ($a_i\in[r_i]$) and every component $w_k$ of $\bw$,
\[
(z_{a_i}^{(i)})_{i\in J}\in\BB_{\#J}(w_k).
\]
\item For every component $w_k$ of $\bw$ with $|w_k|=1$,
\[
\left|w_k-\frac{1}{z_1^{(i)}}\right|\neq 1,\quad \text{for all } i\in [n]\setminus\{j\}.
\]
\end{itemize}
\end{definition}
If every $\bz_i$ is an element of $\{1,-1\}^{r_i}$ and $\bw\in\{0,1,-1\}^s$, then $(\bz_1,\dots,\bz_n;\bw)$ is transportable by definition.

Given the above, we can provide an explicit decomposition of the multivariable connected sum according to the following recipe:
\begin{theorem}\label{thm:main1 precise}
Let $r_1,\dots, r_n$ and $s$ be positive integers, where $n$ is an integer at least $2$.
Let $\bk_1,\dots,\bk_n$ and $\bl$ be indices satisfying $\dep(\bk_1)=r_1,\dots,\dep(\bk_n)=r_n$, $\dep(\bl)=s$.
Let $\bz_1,\dots,\bz_n$ and $\bw$ be elements of $\DD^{r_1}, \dots, \DD^{r_n}$ and $\DD^{s}$, respectively.
We assume that if $\bl=\{1\}^s$ $($resp.~$\bl\neq\{1\}^s$$)$, then $(\bz_1,\dots,\bz_n;\bw)$ $($resp.~$(\bz_1,\dots,\bz_n;(\bw,0))$$)$ is transportable.
Then the multivariable connected sum $Z_n\left({\bz_1\atop\bk_1};\dots;{\bz_n\atop\bk_n} \ \middle| \ {\bw\atop\bl}\right)$
is expressed as a $\ZZ$-linear combination of absolutely convergent multiple polylogarithms.
Explicitly, we obtain such an expression by the following procedures:
\begin{enumerate}[\rm(i)]
\item\label{item:recipe ev-1} Take an integer $j\in[n]$ in Definition~$\ref{def:transportable}$ and replace the $j$th  component with the $n$th component by Proposition~$\ref{prop:fundamental_properties}$ \eqref{item:symmetry}.
\item\label{item:recipe ev-2} Apply the transport relation \eqref{eq:main transport relation} repeatedly until $Z_n\left({\bz_1\atop\bk_1};\dots;{\bz_n\atop\bk_n} \ \middle| \ {\bw\atop\bl}\right)$ is expressed as a $\ZZ$-linear combination of $Z_{n-1}$-values by Proposition~$\ref{prop:fundamental_properties}$ \eqref{item:reduction}.
\item\label{item:recipe ev-3} If $n\geq 3$, then apply the transport relation \eqref{eq:main transport relation} $($replace $n$ with $n-1$$)$ to each $Z_{n-1}$-value repeatedly until it is expressed as a $\ZZ$-linear combination of $Z_{n-2}$-values.
\item\label{item:recipe ev-4} Repeat the same procedure as in \eqref{item:recipe ev-3} to express $Z_n\left({\bz_1\atop\bk_1};\dots;{\bz_n\atop\bk_n} \ \middle| \ {\bw\atop\bl}\right)$ as a $\ZZ$-linear combination of $Z_{1}$-values.
\item\label{item:recipe ev-5} Apply the boundary condition in \S$\ref{sec:boundary condtions}$ to each $Z_1$-value obtained in \eqref{item:recipe ev-4}; then, we have the desired $\ZZ$-linear combination of absolutely convergent multiple polylogarithms.
\end{enumerate}
\end{theorem}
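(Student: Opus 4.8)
The plan is to read the statement as the claim that the displayed five-step algorithm is well defined, terminates, and returns a finite $\ZZ$-linear combination of absolutely convergent MPLs; the proof is then a verification that each procedure can be carried out and that absolute convergence is preserved throughout. Step~\eqref{item:recipe ev-1} is precisely Proposition~\ref{prop:fundamental_properties}~\eqref{item:symmetry}, so after relabeling I may assume the distinguished index $j$ of Definition~\ref{def:transportable} is $n$. The $n$th component then plays the role of a fixed \emph{sink}: it is never transported, so the new variables accumulated on it are never subject to any domain constraint, and all transport moves one letter at a time out of the first $n-1$ \emph{source} components and out of $\bl$ into the sink.

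The core is steps~\eqref{item:recipe ev-2}--\eqref{item:recipe ev-4}, in which \eqref{eq:main transport relation} is used as a rewriting rule. Peeling the last letter off a source index $\bk_i$ exhibits it as $\bigl(\,\cdot\,\bigr)_{\overset{v_i}{\longrightarrow}}$ with $v_i$ the corresponding nonzero component of $\bz_i$ when that letter is $1$, and as $\bigl(\,\cdot\,\bigr)_{\up}=-\bigl(\,\cdot\,\bigr)_{\overset{\infty}{\longrightarrow}}$ when it is $\ge 2$, the sign being absorbed by the $\epsilon$-convention; similarly the value $t$ peeled from $\bl$ is a component of $\bw$ (if the last letter is $1$) or $0$ (if it is $\ge 2$). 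One application of \eqref{eq:main transport relation} transfers exactly one letter to the sink, so the monovariant $W\coloneqq\wt(\bk_1)+\dots+\wt(\bk_{n-1})+\wt(\bl)$ drops by $1$ at each step, by the weight count recorded after \eqref{eq:main transport relation}; hence the rewriting halts. As soon as a source index is emptied in a term, Proposition~\ref{prop:fundamental_properties}~\eqref{item:reduction} deletes it, which is the passage from $Z_n$ to $Z_{n-1}$; iterating produces a $\ZZ$-linear combination of $Z_1$-values, and step~\eqref{item:recipe ev-5} converts each to absolutely convergent MPLs via \S\ref{sec:boundary condtions} and Proposition~\ref{prop:fundamental_properties}~\eqref{it:2.1-1}.

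The main obstacle is to prove that transportability supplies the $\BB$-membership hypotheses of \eqref{eq:main transport relation} at every application. The point is that the variables constrained by \eqref{eq:main transport relation} are exactly those carried by the transported components, and these are always among the original entries $z^{(i)}_{a_i}$ of the $\bz_i$ (or $\infty$) and $w_k$ of $\bw$ (or $0$). At a given application let $J\subseteq[n-1]$ be the set of source components whose current arrow is finite; the active variables are $(z^{(i)}_{a_i})_{i\in J}$ for some $a_i\in[r_i]$, the components with arrow $\infty$ contribute $0$ to the sum and drop out, and the defining property of $\BB_{\#J}(w_k)$ is exactly that the resulting sink value $v_n$, given by $1/v_n=t-\sum_{i\in J}1/v_i$, lies again in $(\DD\setminus\{0\})\cup\{\infty\}$. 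Thus Definition~\ref{def:transportable}, quantified over all non-empty $J$, all choices $(a_i)$, and all components $w_k$---with the slot $w=0$ appended in the hypothesis precisely to license the $\up$-transports of $\bl$ that occur when $\bl\neq\{1\}^s$---delivers every membership required.

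It remains to control convergence and the auxiliary side conditions. Because every $\bk_i$ and $\bl$ is non-empty at the outset and the sink only grows, the sink is non-empty at all times; hence each intermediate sum with at least two components has all components non-empty and converges absolutely by Proposition~\ref{prop:fundamental_properties}~\eqref{item:conv_n}, the only empty component arising at the instant of a reduction, which \eqref{item:reduction} removes at once. Consequently the degenerate case of \eqref{eq:main transport relation} with two empty indices never occurs and the side conditions before \eqref{eq:main transport relation} hold automatically, save for the case $|w_k|=1$, which the second bullet of Definition~\ref{def:transportable} (constraining $z^{(i)}_1$, the variable of the last surviving source letter) settles. Finally one checks that each terminal $Z_1$-value satisfies the modulus hypotheses of Proposition~\ref{prop:fundamental_properties}~\eqref{it:2.1-1} and of \S\ref{sec:boundary condtions}---again a consequence of the $\BB$-conditions---after which \S\ref{sec:boundary condtions} expresses every $Z_1$-value as a $\ZZ$-linear combination of absolutely convergent multiple polylogarithms, completing the proof.
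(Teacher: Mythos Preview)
Your proof is correct and follows exactly the paper's approach. The paper's own proof is a single sentence asserting that the hypotheses needed to apply \eqref{eq:main transport relation} in steps \eqref{item:recipe ev-2}--\eqref{item:recipe ev-4} are supplied by the transportability assumption and Definition~\ref{def:transportable}; your argument is a careful expansion of precisely that check (termination via the weight monovariant, the $\BB_{\#J}$-membership via Definition~\ref{def:transportable}, and the convergence bookkeeping via Proposition~\ref{prop:fundamental_properties}).
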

\begin{proof}
All that needs to be checked is that sufficient conditions for applying the transport relation in \eqref{item:recipe ev-2}--\eqref{item:recipe ev-4} are satisfied, which is accomplished by the assumption and Definition~\ref{def:transportable}.
\end{proof}
\begin{proof}[{Proof of Theorem~$\ref{thm:main_with_no_variables}$}]
Let $k$ be a positive integer and $\bk_1,\dots,\bk_n,\bl$ indices as in the statement.
We decompose $Z_n(\bk_1;\dots;\bk_n\mid \bl)$ as a $\ZZ$-linear combination of $Z_1$-values according to the recipe \eqref{item:recipe ev-2}--\eqref{item:recipe ev-4} in Theorem~\ref{thm:main1 precise}; the transportable condition is satisfied for all $j\in [n]$ in this case and thus we may assume that $j=n$.
Then we see that each the $Z_1$-value belongs to
\[
\left\{Z_1\left({1,z_2,\dots, z_r \atop \bk} \ \middle| \ {\{1\}^s\atop \bk'}\right) \ \middle| \  
\begin{array}{c}
r,s\in\ZZ_{>0}, r+s\leq k-1,\\
\bk\in I_r,\bk'\in I_s \ \text{with } \wt(\bk)+\wt(\bk')=k, \\
\bk \ \text{or } \bk' \ \text{is admissible}, \text{and } \\
z_2,\dots,z_r\in \{1,-1,-\frac{1}{2}\dots,-\frac{1}{n-1}\}
\end{array}
\right\},
\]
where $I_r$ (resp.~$I_s$) denotes the set of indices whose depth is $r$ (resp.~$s$).
By the boundary condition in \S$\ref{sec:boundary condtions}$, it is straightforward to see that every $Z_1$-value belonging to the above set is expressed as a $\ZZ$-linear combination of elements of
\[
\left\{\Li_{\bk}(1,z_2,\dots, z_r) \ \middle| \ {1\leq r\leq k-2, \bk \ \text{is admissible with } \wt(\bk)=k-1, \atop \dep(\bk)=r, \text{and } z_2,\dots,z_r\in \{1,-1,-\frac{1}{2}\dots,-\frac{1}{n-1}\}}\right\}.
\]
Finally, every element of this set coincides with an element of $\mathrm{MPL}(n,k)$ up to sign by the duality (Theorem~\ref{thm:duality}).
\end{proof}
\subsection{Examples}
\begin{example}[= Equality~\eqref{eq:Oloa}]
\[
Z_2(1;1\mid 1,1)=3\zeta(3).
\]
\end{example}
\begin{proof}
By the transport relation, we have
\[
Z_2(1;1\mid 1,1)=Z_2(\emp;2\mid 1,1)+Z_2(1;2)=Z_1(2\mid 1,1)+Z_1(3).
\]
Boundary conditions for this case are $Z_1(2\mid 1,1)=\zeta(1,2)+\zeta(3)=2\zeta(3)$ and $Z_1(3)=\zeta(3)$.
\end{proof}
\begin{example}[= Equality~\eqref{eq:Oloa-like}]\label{ex:new formula for zeta(4)}
\[
Z_2(1;1\mid 2,1)=\frac{17}{8}\zeta(4).
\]
\end{example}
\begin{proof}
By the transport relation, we have
\begin{align*}
Z_2(1;1\mid 2,1)&=Z_2(\emp;2\mid 2,1)+Z_2(1;2\mid 2)\\
&=Z_1(2\mid 2,1)-Z_2\left({\emp\atop\emp};{1,-1\atop2,\phantom{-}1}\ \middle| \  {1\atop 2}\right)-Z_2\left({1\atop1};{1,-1\atop2,\phantom{-}1}\right)\\
&=Z_1(2\mid 2,1)-Z_1\left({1,-1\atop2,\phantom{-}1}\ \middle| \  {1\atop 2}\right)-Z_1\left({1,-1\atop 2,\phantom{-}2}\right).
\end{align*}
Boundary conditions for this case are
\[
Z_1(2\mid 2,1)=\zeta(2,2)+\zeta(4),\quad Z_1\left({1,-1\atop2,\phantom{-}1}\ \middle| \ {1\atop 2}\right)=\zeta(\overline{2},\overline{2}),\quad Z_1\left({1,-1\atop 2,\phantom{-}2}\right)=\zeta(\overline{2},\overline{2}).
\]
Hence we have
\[
Z_2(1;1\mid 2,1)=\zeta(2,2)+\zeta(4)-2\zeta(\overline{2},\overline{2}).
\]
The facts that $\zeta(2,2)=\frac{3}{4}\zeta(4)$ and $\zeta(\overline{2},\overline{2})=-\frac{3}{16}\zeta(4)$ are well-known.
\end{proof}
\begin{example}[= Equality~\eqref{eq:Amtagpa}]
For a positive integer $n$, we have
\[
Z_{n+1}(1;1;\dots;1)=n!\Li_{\{1\}^{n-1},2}\left(1,\frac{1}{2},\dots,\frac{1}{n}\right).
\]
\end{example}
\begin{proof}
By the transport relation \eqref{eq:main transport relation} and Proposition~$\ref{prop:fundamental_properties}$ \eqref{item:reduction},
\[
Z_{r+1}\left({1\atop 1};\dots;{1\atop 1};{\bz\atop\bk}\right)=-rZ_{r}\left({1\atop 1};\dots;{1\atop 1};\left({\bz\atop\bk}\right)_{\overset{\frac{1}{1-r}}{\longrightarrow}}\right)
\]
holds for an integer $r\geq 2$, a non-empty index $\bk$ and $\bz\in\DD^{\dep(\bk)}$.
Applying this repeatedly and then applying the transport relation for the duality, we have
\begin{align*}
Z_{n+1}(1;1;\dots;1)&=(-1)^{n-1}n!Z_2\left({1\atop 1};{1, \atop 1,}{-\frac 1{n-1} \atop \phantom{-}1}{,\dots,\atop,\dots,}{-\frac{1}{2},\atop\phantom{-}1,}{-1\atop \phantom{-}1}\right)\\
&=(-1)^{n-1}n!\Li_{\{1\}^{n-1},2}\left(1,-\frac 1{n-1},\dots,-\frac 12,-1\right).
\end{align*}
Since 
\begin{equation}\label{eq:11112duality}
\Li_{\{1\}^{n-1},2}\left(1,-\frac 1{n-1},\dots,-\frac 12,-1\right)=(-1)^{n-1}\Li_{\{1\}^{n-1},2}\left(1,\frac{1}{2},\dots,\frac{1}{n}\right)
\end{equation}
holds as a consequence of the duality (Theorem~\ref{thm:duality}), the proof is completed.
\end{proof}
\begin{example}[Equality~\eqref{eq:Cloitre} with variables]\label{ex:var Cloitre}
Let $z_1$ and $z_2$ be complex numbers whose absolute values are less than $1$.
Assume that $\mathrm{Re}(z_i)<\frac{1}{2}$ holds for $i=1$ or $2$.
Then we have
\[
Z_2\left({z_1\atop 1};{z_2\atop 1}\right)=\mathrm{Li}_2(z_1)+\mathrm{Li}_2(z_2)-\mathrm{Li}_2(z_1+z_2-z_1z_2).
\]
Here, $\mathrm{Li}_2(z)$ is the dilogarithm.
\end{example}
\begin{proof}
We may assume that $z_1\neq 0$ and $\mathrm{Re}(z_1)<\frac{1}{2}$.
By the transport relation and Proposition~$\ref{prop:fundamental_properties}$ \eqref{item:reduction}, \eqref{it:2.1-1}, we have
\begin{equation}\label{eq:Z11}
Z_2\left({z_1\atop 1};{z_2\atop 1}\right)=-Z_2\left({\emp\atop\emp};{z_2, \atop 1,} {\frac{z_1}{z_1-1}\atop 1}\right)=-\Li_{1,1}\left(z_2,\frac{z_1}{z_1-1}\right).
\end{equation}
By substituting $z_2(z_1-1)/z_1$ (resp.~$z_1/(z_1-1)$) into $x$ (resp.~$y$) in Zagier's formula \cite[Proposition~1]{Zagier}
\[
\mathrm{Li}^*_{1,1}(x,y)=\mathrm{Li}_2\left(\frac{xy-y}{1-y}\right)-\mathrm{Li}_2\left(\frac{y}{y-1}\right)-\mathrm{Li}_2(xy) \quad (|xy|<1, |y|<1),
\]
we have the desired formula.
\end{proof}
\section{Relations among special values of multiple polylogarithms}\label{sec:Relations}
\subsection{Recipe for relations}\label{subsec:Recipe for relations}
We have already derived the duality for MPLs by using our transport relations in \S\ref{subsec:duality for MPL}.
In this subsection, we describe a general recipe for obtaining relations among MPLs based on transport relations and evaluations of multivariable connected sums (Theorem~\ref{thm:main1 precise}).

We consider data
\begin{itemize}
\item $n$: an integer at least $2$,
\item $\bk_1,\dots,\bk_{n-1}$ and $\bl$: non-empty indices,
\item $\bz_1=(z_1^{(1)},\dots, z_{r_1}^{(1)})\in\DD^{r_1},\dots, \bz_{n-1}=(z_1^{(n-1)},\dots,z_{r_{n-1}}^{(n-1)})\in\DD^{r_{n-1}}$ and $\bw=(w_1,\dots,w_s)\in\DD^s$, where $r_1=\dep(\bk_1),\dots,r_{n-1}=\dep(\bk_{n-1})$ and $s=\dep(\bl)$
\end{itemize}
satisfying the following assumptions:
\begin{itemize}
\item If $\bl$ is non-admissible, then
\[
\sum_{\substack{1\leq i\leq n-1 \\ \bk_i \ \text{is non-admissible}}}\frac{1}{z_{r_i}^{(i)}}\neq w_s,
\]
and if $\bl$ is admissible, then
\[
\sum_{\substack{1\leq i\leq n-1 \\ \bk_i \ \text{is non-admissible}}}\frac{1}{z_{r_i}^{(i)}}\neq 0.
\]
\item For every non-empty subset $J\subset[n-1]$, every $(a_i)_{i\in J}$ ($a_i\in[r_i]$) and every component $w_k$ of $\bw$,
\[
(z_{a_i}^{(i)})_{i\in J}\in\BB_{\#J}(w_k).
\]
\item If $\bl\neq \{1\}^s$, then the second assumption with $\bw$ replaced by $(\bw,0)$ is valid.
\item For every component $w_k$ of $\bw$ with $|w_k|=1$,
\[
\left|w_k-\frac{1}{z_1^{(i)}}\right|\neq 1,\quad \text{for all } i\in [n-1].
\]
\item If $n=2$ and both $\bk_1$ and $\bl$ are non-admissible, then $|z_{r_1}^{(1)}|<1$ or $|w_s|<1$ holds.
\end{itemize}
For such given data, we can obtain a relation among MPLs by the following procedure:
\begin{enumerate}[(i)]
\item\label{item:rel recipe-1} By Proposition~\ref{prop:fundamental_properties} \eqref{item:reduction}, we have
\begin{equation}\label{eq:first step of relations recipe}
Z_n\left({\bz_1\atop\bk_1};\dots;{\bz_{n-1}\atop\bk_{n-1}};{\emp\atop\emp} \ \middle| \ {\bw\atop\bl}\right)=Z_{n-1}\left({\bz_1\atop\bk_1};\dots;{\bz_{n-1}\atop\bk_{n-1}} \ \middle| \ {\bw\atop\bl}\right)
\end{equation}
and then we can decompose the right-hand side as a $\ZZ$-linear combination of MPLs by Theorem~\ref{thm:main1 precise} or the boundary condition.
\item\label{item:rel recipe-2} On the other hand, by using the transport relation \eqref{eq:main transport relation} and Propositions~\ref{prop:fundamental_properties} \eqref{item:reduction} as needed, we can decompose the left-hand side of \eqref{eq:first step of relations recipe} as a $\ZZ$-linear combination of $Z_n$-values and $Z_{n-1}$-values.
Then, we decompose all these $Z_n$-values and $Z_{n-1}$-values as $\ZZ$-linear combinations of MPLs by Theorem~\ref{thm:main1 precise} or the boundary condition.
\item\label{item:rel recipe-3} By comparing the output results of \eqref{item:rel recipe-1} and \eqref{item:rel recipe-2}, we obtain a relation among MPLs.
\end{enumerate}

Using assumptions regarding the data and contents in \S\ref{sec:transport relations}--\S\ref{sec:Evaluations}, it is true that the above procedure yields a relation among MPLs; note that the relation can be tautological as in \eqref{eq:Tautological}.
The family of relations obtained by executing the above procedure on data satisfying $n=2$, $\bl=(\{1\}^s)$ and $\bw=(\{1\}^s)$ is actually a natural extension of the family of Ohno's relations for MZVs to the MPLs case.
We prove Ohno's relation for MPLs in the next subsection; see Theorem~\ref{thm:Ohno MPLs}.

The above description is the concrete content of Theorem~\ref{thm:main2}.
\subsection{Ohno's relation for multiple polylogarithms}
For a non-negative integer $h$, we use the following shorthand notation for multivariable connected sums:
\[
Z^{(h)}\left({\bz_1\atop\bk_1};{\bz_2\atop\bk_2}\right)\coloneqq Z_2\left({\bz_1\atop\bk_1};{\bz_2\atop\bk_2} \ \middle| \ {\{1\}^{h+1}\atop\{1\}^{h+1}}\right).
\]
Let $\bk_1, \bk_2$ be indices, $\bz_1\in\DD^{\dep(\bk_1)}$, $\bz_2\in\DD^{\dep(\bk_2)}$ and $v\in\BB\coloneqq\BB_1(1)\setminus\{\infty\}$.
We assume that if $\bk_1=\emp$ (resp.~$\bk_2=\emp$), then $\mathrm{Re}(v)\neq\frac{1}{2}$ (resp.~$|v|\neq 1$).
Then, by transport relations for $Z_2$ (Example~\ref{ex:Z_2_gen} \eqref{it:Ex3.1-1}, \eqref{it:Ex3.1-2}), the following relations hold:
\begin{align*}
Z^{(h)}\left(\left({\bz_1\atop\bk_1}\right)_{\overset{v}{\longrightarrow}};{\bz_2\atop\bk_2}\right)&=-Z^{(h)}\left({\bz_1\atop\bk_1};\left({\bz_2\atop\bk_2}\right)_{\overset{\frac{v}{v-1}}{\longrightarrow}}\right)-Z^{(h-1)}\left(\left({\bz_1\atop\bk_1}\right)_{\overset{v}{\longrightarrow}};\left({\bz_2\atop\bk_2}\right)_{\overset{\frac{v}{v-1}}{\longrightarrow}}\right) \ (v\neq 1),\\
Z^{(h)}\left(\left({\bz_1\atop\bk_1}\right)_{\overset{1}{\longrightarrow}};{\bz_2\atop\bk_2}\right)&=Z^{(h)}\left({\bz_1\atop\bk_1};\left({\bz_2\atop\bk_2}\right)_{\up}\right)+Z^{(h-1)}\left(\left({\bz_1\atop\bk_1}\right)_{\overset{1}{\longrightarrow}};\left({\bz_2\atop\bk_2}\right)_{\up}\right)\quad (\bk_2\neq\emp),\\
Z^{(h)}\left(\left({\bz_1\atop\bk_1}\right)_{\up};{\bz_2\atop\bk_2}\right)&=Z^{(h)}\left({\bz_1\atop\bk_1};\left({\bz_2\atop\bk_2}\right)_{\overset{1}{\longrightarrow}}\right)-Z^{(h-1)}\left(\left({\bz_1\atop\bk_1}\right)_{\up};\left({\bz_2\atop\bk_2}\right)_{\overset{1}{\longrightarrow}}\right)\quad (\bk_1\neq\emp).
\end{align*}
Here, set $Z^{(-1)}(*;*)\coloneqq0$; the case where $h=0$ in the above relations gives the transport relations for the duality (Example~\ref{ex:tranport relation for duality}).

It can be proved by direct calculation that the space of relations obtained by restricting to the use of transport relations for $\{Z^{(h)}\}_{h\geq 0}$ coincides with the space of Ohno's relations for MPLs.
However, in this subsection, we derive Ohno's relations for MPLs more clearly in terms of a formal power series ring over a Hoffman-type algebra.
This also gives a new proof of Ohno's relation for MZVs; compare this with another proof using connected sums by the third author and Yamamoto \cite{Seki-Yamamoto}.

First, we prepare some terminology of the Hoffman-type algebra appropriately.
\begin{definition}
Prepare letters $x$ and $e_z$ for all $z\in\BB$.
Let $\hof$ be a non-commutative polynomial algebra $\hof\coloneqq\QQ\langle x, e_z  \ (z\in\BB)\rangle$.
Furthermore, we define its subalgebras $\hof^1$ and $\hof^0$ by
\begin{align*}
&\hof^1\coloneqq\QQ\oplus\bigoplus_{z\in\mathbb{B}} e_z\hof\supset\\
&\hof^0\coloneqq\QQ\oplus\bigoplus_{\substack{z\in\BB, \\ \mathrm{Re}(z)\neq \frac{1}{2}, |z|\neq 1}}\QQ e_z\oplus\bigoplus_{z\in\mathbb{B}, \ \mathrm{Re}(z)\neq \frac{1}{2}}e_z\hof x\oplus\bigoplus_{\substack{z\in\mathbb{B}, \ \mathrm{Re}(z)\neq \frac{1}{2}\\z^{\prime}\in\mathbb{B}, \ |z^{\prime}|\neq 1}} e_z\hof e_{z^{\prime}}.
\end{align*}
We say that a word $w\in\hof^1$ is associated with $\left({\bz\atop\bk}\right)$ when $w=e_{z_1}x^{k_1-1}\cdots e_{z_r}x^{k_r-1}$, where $\bz=(z_1,\dots,z_r)$ ($z_i\in\BB$) and $\bk=(k_1,\dots,k_r)$ ($k_i\in\ZZ_{>0}$).
In addition, $1\in\hof^1$ is associated with $\left({\emp\atop\emp}\right)$.
Note that a word of $\hof^0$ is associated with a pair satisfying the dual condition in \S\ref{subsec:duality for MPL}.

We define a $\QQ$-linear map $L\colon\hof^0\jump{t}\to\CC\jump{t}$ by 
\[
L\left(\sum_{i=0}^{\infty}w_it^i\right)\coloneqq \sum_{i=0}^{\infty}L(w_i)t^i
\]
and a $\QQ$-linear map $\mathfrak{L}\colon\hof^0\jump{t}\to\CC\jump{t}$ by
\[
\mathfrak{L}\left(\sum_{i=0}^{\infty}w_it^i\right)\coloneqq\sum_{i=0}^{\infty}\mathfrak{L}(w_i)t^i.
\] 
Here, 
\[
L(w)\coloneqq \Li_{\bk}(\bz),\quad \mathfrak{L}(w)\coloneqq \sum_{h=0}^{\infty}Z^{(h)}\left({\bz\atop\bk};{\emp\atop\emp}\right)t^h
\]
for each word $w\in\hof^0$ associated with $\left({\bz\atop\bk}\right)$.
\end{definition}
\begin{definition}
Let $\hof\jump{t}$ be the formal power series ring over $\hof$.
We define two automorphisms $\sigma, \rho$ and two anti-automorphisms $\tau,\tau'$ on $\hof\jump{t}$ by
\[
\begin{array}{lll}
\sigma(x)=x,\quad & &\sigma(e_z)=e_z(1-xt)^{-1}\quad (z\in\BB),\\
\rho(x)=x,& &\rho(e_z)=e_z(1-e_zt)^{-1}\quad (z\in\BB), \\
\tau(x)=e_1, &\tau(e_1)=x,\quad & \tau(e_z)=-e_{\frac{z}{z-1}}(1-e_{\frac{z}{z-1}}t)^{-1} \quad (z\in \BB\setminus\{1\}),\\
\tau'(x)=e_1(1+e_1t)^{-1}, &\tau'(e_1)=x(1-xt)^{-1}, &\tau'(e_z)=-e_{\frac{z}{z-1}}(1+e_{\frac{z}{z-1}}t)^{-1} \quad (z\in \BB\setminus\{1\})
\end{array}
\]
and $\sigma(t)=\rho(t)=\tau(t)=\tau'(t)=t$.
Images of the generators under $\sigma^{-1}$ and $\rho^{-1}$ are given by
\[
\begin{array}{ll}
\sigma^{-1}(x)=x,\quad &\sigma^{-1}(e_z)=e_z(1-xt)\quad (z\in\BB),\\
\rho^{-1}(x)=x,& \rho^{-1}(e_z)=e_z(1+e_zt)^{-1}\quad (z\in\BB), \\
\end{array}
\]
with $\sigma^{-1}(t)=\rho^{-1}(t)=t$.
Furthermore, $\tau^{-1}=\tau$ and $\tau'^{-1}=\tau'$ hold.
\end{definition}
\begin{lemma}\label{lem:rho_tau}
$\rho\circ\tau'=\tau\circ\rho$.
\end{lemma}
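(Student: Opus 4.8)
The plan is to exploit that both composites are anti-automorphisms of $\hof\jump{t}$ fixing $t$, and thereby reduce the identity to a check on generators. Since $\rho$ is an automorphism while $\tau$ and $\tau'$ are anti-automorphisms, both $\rho\circ\tau'$ and $\tau\circ\rho$ are anti-automorphisms of $\hof\jump{t}$; an anti-automorphism $\phi$ sends a word $g_{i_1}\cdots g_{i_m}$ to $\phi(g_{i_m})\cdots\phi(g_{i_1})$, so two continuous anti-automorphisms that agree on the topological generating set $\{x\}\cup\{e_z\mid z\in\BB\}$ and on the central variable $t$ agree everywhere. Thus it suffices to verify $\rho(\tau'(g))=\tau(\rho(g))$ for $g\in\{x\}\cup\{e_z\mid z\in\BB\}$, equality on $t$ being immediate since all four maps fix $t$.

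Before the generator checks I would isolate one geometric-series identity, valid for every $w\in\BB$ because $t$ is central:
\[
1+e_w t\,(1-e_w t)^{-1}=(1-e_w t)^{-1},
\]
equivalently $1+\rho(e_w)\,t=(1-e_w t)^{-1}$. This is precisely the relation behind $\rho^{-1}(e_w)=e_w(1+e_w t)^{-1}$, and it is the tool that collapses every nested inverse appearing below.

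Then I would carry out the three checks. For $g=x$: using that $\rho$ is multiplicative, $\rho(\tau'(x))=\rho(e_1)\,(1+\rho(e_1)t)^{-1}=e_1(1-e_1t)^{-1}(1-e_1t)=e_1=\tau(x)=\tau(\rho(x))$. For $g=e_1$: the left side is $\rho(\tau'(e_1))=\rho\bigl(x(1-xt)^{-1}\bigr)=x(1-xt)^{-1}$, while on the right I expand $\rho(e_1)=\sum_{n\ge0}e_1^{\,n+1}t^n$ and apply $\tau$, which reverses products and sends $e_1\mapsto x$, to obtain $\tau(\rho(e_1))=\sum_{n\ge0}x^{\,n+1}t^n=x(1-xt)^{-1}$. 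For $g=e_z$ with $z\neq1$, set $w\coloneqq\frac{z}{z-1}$ and $A\coloneqq\tau(e_z)=-e_w(1-e_w t)^{-1}$; the left side is $\rho(\tau'(e_z))=-\rho(e_w)\,(1+\rho(e_w)t)^{-1}=-e_w$, and the right side is $\tau(\rho(e_z))=\sum_{n\ge0}t^n A^{\,n+1}=A(1-tA)^{-1}$, which by the isolated identity (whereby $1-tA=(1-e_w t)^{-1}$) again equals $-e_w$.

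The step that requires genuine care, and which I regard as the main obstacle, is the $\tau\circ\rho$ side on $e_1$ and on $e_z$: one must first expand $\rho(e_z)=\sum_n e_z^{\,n+1}t^n$ and only then apply $\tau$, keeping track that $\tau$ reverses products—so that $\tau(e_z^{\,n+1}t^n)=t^n\tau(e_z)^{\,n+1}$—and finally reassemble the resulting series into the closed form $A(1-tA)^{-1}$. Here the centrality of $t$ is exactly what lets the reversed factors be recombined, and once the geometric-series identity is applied the two sides visibly coincide. Having matched all generators together with $t$, the two anti-automorphisms are equal, which is the assertion $\rho\circ\tau'=\tau\circ\rho$.
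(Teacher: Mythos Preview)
Your proof is correct and follows exactly the approach the paper indicates: reduce to the generators $\{x\}\cup\{e_z\mid z\in\BB\}$ (together with $t$) and check each case. The paper's own proof consists of the single sentence that this reduction suffices, leaving the verifications implicit; you have simply carried them out in full, and your computations on $x$, $e_1$, and $e_z$ ($z\neq1$) are all accurate.
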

\begin{proof}
It is sufficient to check that $(\rho\circ\tau')(u)=(\tau\circ\rho)(u)$ holds for  $u\in\{x\}\cup\{e_z \mid z\in \BB\}$.
\end{proof}
\begin{proposition}[Boundary condition]\label{prop:Hoffman boundary condtion}
For every $w\in\hof^0\jump{t}$, we have
\[
\mathfrak{L}(w)=L\left((\sigma\circ\rho)(w)\right).
\]
\end{proposition}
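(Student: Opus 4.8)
The plan is to reduce to the case of a single word and then compute both sides in closed form as power series in $t$ whose $t$-coefficients are absolutely convergent series in the summation variables. Since $L,\mathfrak{L}$ act coefficientwise and $\sigma,\rho$ fix $t$ and are $\QQ$-linear and continuous (and visibly preserve $\hof^0$, so $(\sigma\circ\rho)(w)\in\hof^0\jump{t}$), it suffices to treat a word $w=e_{z_1}x^{k_1-1}\cdots e_{z_r}x^{k_r-1}\in\hof^0$ associated with $\left({\bz\atop\bk}\right)$. First I would unfold the left-hand side: by Proposition~\ref{prop:fundamental_properties}~\eqref{item:reduction} we have $Z^{(h)}\!\left({\bz\atop\bk};{\emp\atop\emp}\right)=Z_1\!\left({\bz\atop\bk}\,\middle|\,{\{1\}^{h+1}\atop\{1\}^{h+1}}\right)$, and since the connector equals $1$ and $q_{h+1}=m_r$, the defining series collapses to
\[
Z^{(h)}\!\left({\bz\atop\bk};{\emp\atop\emp}\right)=\sum_{0=m_0<\cdots<m_r}\left[\prod_{i=1}^r\frac{z_i^{m_i-m_{i-1}}}{m_i^{k_i}}\right]\sum_{0<q_1\le\cdots\le q_h\le m_r}\frac{1}{q_1\cdots q_h}.
\]
Summing against $t^h$ and invoking the complete-homogeneous generating identity $\sum_{h\ge0}t^h\sum_{0<q_1\le\cdots\le q_h\le m}\frac{1}{q_1\cdots q_h}=\prod_{j=1}^m\frac{j}{j-t}$ gives
\[
\mathfrak{L}(w)=\sum_{0=m_0<\cdots<m_r}\left[\prod_{i=1}^r\frac{z_i^{m_i-m_{i-1}}}{m_i^{k_i}}\right]\prod_{j=1}^{m_r}\frac{j}{j-t}.
\]

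For the right-hand side I would first record the effect of $\sigma$ alone: expanding $\sigma(e_z)=e_z\sum_{a\ge0}x^at^a$ and applying $L$ termwise, for any word $v$ associated with $\left({\bz\atop\bk}\right)$ one gets
\[
L(\sigma(v))=\sum_{0=m_0<\cdots<m_r}\prod_{i=1}^r\frac{z_i^{m_i-m_{i-1}}}{m_i^{k_i}}\cdot\frac{m_i}{m_i-t},
\]
the factor $\frac{m_i}{m_i-t}$ arising from $\sum_{a\ge0}(t/m_i)^a$. Next I expand $\rho(w)=\sum_{b_1,\dots,b_r\ge0}t^{b_1+\cdots+b_r}\,W_{\boldsymbol b}$, where $W_{\boldsymbol b}$ repeats the letter $e_{z_i}$ exactly $b_i+1$ times before each $x^{k_i-1}$; equivalently $W_{\boldsymbol b}$ is associated with the pair obtained by inserting $b_i$ copies of $(z_i,1)$ in front of each $(z_i,k_i)$.

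The key step is to apply the $\sigma$-formula to each $W_{\boldsymbol b}$ and then sum over the inserted variables. Within the $i$th block the powers of $z_i$ telescope to $z_i^{M_i-M_{i-1}}$ (with $M_i$ the variable attached to $(z_i,k_i)$ and $M_0=0$), each inserted $(z_i,1)$ with variable $n$ contributes $\frac1n\cdot\frac{n}{n-t}=\frac1{n-t}$, and the weight $t^{\sum b_i}$ attaches one $t$ to each inserted variable. At fixed $M_1<\cdots<M_r$ the inserted variables range over subsets of $\{M_{i-1}+1,\dots,M_i-1\}$, and the elementary-symmetric identity $\sum_{b\ge0}\sum_{M_{i-1}<n_1<\cdots<n_b<M_i}\prod_j\frac{t}{n_j-t}=\prod_{n=M_{i-1}+1}^{M_i-1}\frac{n}{n-t}$ collapses the block; combined with the surviving factor $\frac{M_i}{M_i-t}$ this telescopes to $\prod_{n=M_{i-1}+1}^{M_i}\frac{n}{n-t}$. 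Multiplying over $i$ and using that the intervals $(M_{i-1},M_i]$ partition $(0,M_r]$ yields $\prod_{n=1}^{M_r}\frac{n}{n-t}$, so $L((\sigma\circ\rho)(w))$ equals the closed form already found for $\mathfrak{L}(w)$, completing the proof.

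I expect the main obstacle to be purely bookkeeping: correctly telescoping the $z_i$-powers across each repeated block and keeping the two generating-function identities straight (the homogeneous one governing $\mathfrak{L}$ against the elementary one governing $\rho$), together with justifying each interchange of summation. The latter is harmless because $w\in\hof^0$ forces all the MPLs involved to converge absolutely, and at each fixed power of $t$ only finitely many pairs $(\ba,\boldsymbol b)$ contribute, so Fubini applies coefficientwise in $\CC\jump{t}$.
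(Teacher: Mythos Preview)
Your argument is correct, but it follows a genuinely different route from the paper's. The paper first combines the two automorphisms algebraically, observing that $(\sigma\circ\rho)(e_{z})=e_{z}(1-(x+e_{z})t)^{-1}$, so that
\[
(\sigma\circ\rho)(w)=\sum_{h\ge 0}t^{h}\sum_{f_1+\cdots+f_r=h}e_{z_1}(e_{z_1}+x)^{f_1}x^{k_1-1}\cdots e_{z_r}(e_{z_r}+x)^{f_r}x^{k_r-1},
\]
and then identifies $L$ of the $t^{h}$-coefficient directly with the defining series of $Z^{(h)}\!\left({\bz\atop\bk};{\emp\atop\emp}\right)$ in one step. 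No intermediate closed form in $t$ is used.

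You instead keep $\sigma$ and $\rho$ separate and pass through the explicit product $\prod_{j=1}^{m_r}\frac{j}{j-t}$: on the $\mathfrak{L}$-side this product comes from the complete-homogeneous generating identity for the $q$-sum, while on the $L\circ\sigma\circ\rho$-side it is rebuilt block by block from the elementary-symmetric identity $\prod_{n}(1+\tfrac{t}{n-t})=\prod_{n}\tfrac{n}{n-t}$ applied to the $\rho$-inserted variables, telescoped against the $\sigma$-factors $M_i/(M_i-t)$. The paper's approach is shorter and stays entirely at the level of $t^{h}$-coefficients; yours is a bit longer but more transparent, since it actually \emph{evaluates} $\mathfrak{L}(w)$ in closed form and exhibits the match as an identity between the homogeneous and elementary symmetric generating functions in the variables $1,\dots,m_r$.
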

\begin{proof}
It is sufficient to show the case $w=e_{z_1}x^{k_1-1}\cdots e_{z_r}x^{k_r-1}\in\hof^0$.
By definition, we have
\begin{align*}
(\sigma\circ\rho)(w)&=\sigma\left(e_{z_1}(1-e_{z_1}t)^{-1}x^{k_1-1}\cdots e_{z_r}(1-e_{z_r}t)^{-1}x^{k_r-1}\right)\\
&=e_{z_1}(1-xt-e_{z_1}t)^{-1}x^{k_1-1}\cdots e_{z_r}(1-xt-e_{z_r}t)^{-1}x^{k_r-1}\\
&=\sum_{h=0}^{\infty} \sum_{\substack{f_1,\ldots,f_r\ge 0\\f_1+\cdots+f_r=h}}\left(e_{z_1}(e_{z_1}+x)^{f_1}x^{k_1-1}\cdots e_{z_r}(e_{z_r}+x)^{f_r}x^{k_r-1}\right)t^h.
\end{align*}
Since 
\begin{align*}
&L\Biggl(\sum_{\substack{f_1,\ldots,f_r\ge 0\\f_1+\cdots+f_r=h}}e_{z_1}(e_{z_1}+x)^{f_1}x^{k_1-1}\cdots e_{z_r}(e_{z_r}+x)^{f_r}x^{k_r-1}\Biggr)\\
&=\sum_{\substack{0<m_1<\cdots<m_r \\ 0=i_0< i_1\leq \cdots \leq i_h\leq m_r}}\frac{z_1^{m_1}z_2^{m_2-m_1}\cdots z_r^{m_r-m_{r-1}}}{m_1^{k_1}\cdots m_r^{k_r}i_1\cdots i_h}=Z^{(h)}\left({z_1,\dots, z_r\atop k_1,\dots, k_r};{\emp\atop\emp}\right)
\end{align*}
holds, we have $L\left((\sigma\circ\rho)(w)\right)=\mathfrak{L}(w)$ by definition.
\end{proof}
\begin{definition}
For a non-negative integer $h$, we define a $\QQ$-bilinear map $\fZ^{(h)}\colon\hof'\times\hof'\to\CC$ by
\[
\fZ^{(h)}(w_1;w_2)\coloneqq Z^{(h)}\left({\bz_1\atop\bk_1};{\bz_2\atop\bk_2}\right),
\]
where $\hof'\coloneqq \bigoplus_{z\in\BB}e_z\hof$ (non-unital) and $w_1$ and $w_2$ are associated with $\bigl({\bz_1\atop\bk_1}\bigr)$ and $\bigl({\bz_2\atop\bk_2}\bigr)$, respectively.
Furthermore, we define a $\QQ$-bilinear map $\fZ\colon\hof'\jump{t}\times\hof'\jump{t}\to\CC\jump{t}$ by
\[
\fZ\left(\sum_{i=0}^{\infty}w_it^i;\sum_{j=0}^{\infty}w'_jt^j\right)\coloneqq \sum_{i=0}^{\infty}\sum_{j=0}^{\infty}\fZ(w_i;w'_j)t^{i+j}
\]
and
\[
\fZ(w_i;w'_j)\coloneqq\sum_{h=0}^{\infty}\fZ^{(h)}(w_i;w'_j)t^h.
\]
\end{definition}
\begin{proposition}[Transport relation]\label{prop:Hoffman transport relations}
We have the following:
\begin{enumerate}[\rm(i)]
\item For $w_1, w_2 \in \hof'\jump{t}$ and $u\in \hof\jump{t}$,
\[
\fZ(w_1u;w_2)=\fZ(w_1;w_2\tau'(u)).
\]
\item For $w\in\hof'\jump{t}$ and $u\in\{x\}\cup\{e_z\mid z\in\BB, |z|\neq 1\}$,
\[
\mathfrak{L}(wu)=\fZ(w;\tau'(u)).
\]
\item For $w\in\hof'\jump{t}$ and $u\in\{e_z\mid z\in \BB, \mathrm{Re}(z)\neq \frac{1}{2}\}$,
\[
\fZ(u;w)=\mathfrak{L}(w\tau'(u)).
\]
\item For $z\in\BB$ with $\mathrm{Re}(z)\neq \frac{1}{2}$ and $|z|\neq 1$,
\[
\mathfrak{L}(e_z)=\mathfrak{L}(\tau'(e_z)).
\]
\end{enumerate}
\end{proposition}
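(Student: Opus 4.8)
The plan is to read each identity as the $t$-power-series incarnation of the transport relations displayed at the beginning of this subsection, under the dictionary that appending $\overset{v}{\rightarrow}$ (resp.\ applying $\up$) to a pair corresponds to right multiplication of its associated word by $e_v$ (resp.\ by $x$), and that every summand carrying a lowered superscript $Z^{(h-1)}$ contributes a factor $t$ after multiplying by $t^h$ and summing over $h\ge0$ (with $Z^{(-1)}=0$). I regard \eqref{item:symmetry} and \eqref{item:reduction} of Proposition~\ref{prop:fundamental_properties} as available throughout, the latter being what turns a multivariable connected sum with an empty slot into a $Z_1$-value, i.e.\ into a value of $\mathfrak{L}$.

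I would prove (i) first, since (ii)--(iv) reduce to it. Writing $P(u)$ for the assertion ``$\fZ(w_1u;w_2)=\fZ(w_1;w_2\tau'(u))$ for all $w_1,w_2\in\hof'\jump{t}$'', the relation $P(1)$ is trivial, and $P(u)\wedge P(u')\Rightarrow P(uu')$ holds by a two-step transport combined with the anti-automorphism property $\tau'(uu')=\tau'(u')\tau'(u)$ (here one uses that $\hof'$ is a right ideal, so $w_1u\in\hof'\jump{t}$); together with $\QQ$-bilinearity, the centrality of $t$, and $t$-adic continuity this reduces $P$ to the generators $u\in\{x\}\cup\{e_z\mid z\in\BB\}$. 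For each generator I take the matching one of the three displayed transport relations (the $v\ne1$, the $\overset{1}{\rightarrow}$, and the $\up$ cases), multiply by $t^h$, sum over $h$, and collect the two right-hand terms so that the lowered term is absorbed into a single right factor $1\pm te_\bullet$; since $w_1,w_2\in\hof'$ both indices are non-empty, so the side conditions of those relations are vacuous. Inverting $1\pm te_\bullet$ in $\hof\jump{t}$ and sliding it past the commuting $e_\bullet$ produces exactly $\tau'(x)=e_1(1+e_1t)^{-1}$, $\tau'(e_1)=x(1-xt)^{-1}$, and $\tau'(e_v)=-e_{\frac{v}{v-1}}(1+e_{\frac{v}{v-1}}t)^{-1}$, respectively.

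For (ii) I specialize to an empty second slot, so that $\sum_hZ^{(h)}(wu;\emp)t^h=\mathfrak{L}(wu)$ by \eqref{item:reduction}; the displayed relations with $\bk_2=\emp$ (legitimate precisely because $|z|\ne1$ for $u=e_z$, and because $\bk\ne\emp$ for $u=x$) express $\mathfrak{L}(wu)$ in terms of $\fZ(w;e_\bullet)$ and $t\,\fZ(wu;e_\bullet)$, and feeding the latter through (i) collapses the combination to $\fZ(w;\tau'(u))$ by the same $1\pm te_\bullet$ identity as above. For (iii) the single letter $u=e_z$ sits in the \emph{first} slot; transporting it (the $v\ne1$ relation when $z\ne1$, the $\overset{1}{\rightarrow}$ relation when $z=1$) and applying \eqref{item:reduction} to the resulting empty first slot yields a self-referential identity $\fZ(e_z;w)=\pm\mathfrak{L}(we_\bullet)\mp t\,\fZ(e_z;we_\bullet)$, which I resolve $t$-adically: each iteration multiplies the remainder by a further $t$, so the geometric series sums to $\mathfrak{L}\!\left(w\sum_{j\ge1}(\mp1)^jt^{j-1}e_\bullet^{j}\right)=\mathfrak{L}(w\tau'(e_z))$. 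Here $\mathrm{Re}(z)\ne\tfrac12$ is exactly what forces $\bigl|\tfrac{z}{z-1}\bigr|<1$, guaranteeing that every $\mathfrak{L}(we_\bullet^{j})$ converges. Finally (iv) is the doubly-empty case of this computation: transporting $e_z$ with both slots trivial and invoking \eqref{item:reduction} twice gives $\mathfrak{L}(e_z)=-\mathfrak{L}(e_{z'})-t\,\fZ(e_z;e_{z'})$ with $z'=\tfrac{z}{z-1}$, into which (iii) may be substituted; the hypotheses $\mathrm{Re}(z)\ne\tfrac12$ and $|z|\ne1$ are precisely the side conditions that license the degenerate ($n=d=2$, $\bk_1=\bk_2=\emp$) transport relation, and the same $-e_{z'}(1+te_{z'})^{-1}=\tau'(e_z)$ collapse finishes the proof.

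I expect the genuine obstacle to be the two places where a single inversion of $1\pm te_\bullet$ is not enough and a full $t$-adic summation of a recursion is required, namely (iii) and (iv): one must check that the recursion's remainder has $t$-valuation tending to infinity, that each iterate $\mathfrak{L}(we_\bullet^{j})$ is a well-defined convergent power series, and that the resulting sum is legitimately $\tau'(e_z)$. The bookkeeping of the side conditions --- verifying that $z\in\BB$ and $\mathrm{Re}(z)\ne\tfrac12$ imply $\tfrac{z}{z-1}\in\BB$ with $\bigl|\tfrac{z}{z-1}\bigr|<1$, and that these are exactly the inequalities demanded by the transport relations --- is the other point needing care, though it is elementary.
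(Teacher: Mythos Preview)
Your proof is correct and is a fully elaborated version of the paper's one-line argument, which simply reduces by bilinearity and $t$-adic continuity to the case where $w_1,w_2,w$ are words and $u$ is a letter and then invokes the transport relations for $\{Z^{(h)}\}_{h\ge0}$. Your substitution and iteration steps make explicit what the paper leaves implicit in the word ``immediately''; the only minor remarks are that (iii) does not in fact reduce to (i) (your own argument handles it independently), and that the $t$-adic iteration you use there could equally be replaced by the same one-step substitution $w\mapsto w(1\pm te_\bullet)^{-1}$ you used for (i).
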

\begin{proof}
It is sufficient to show the case where $w_1,w_2,w$ are words and $u$ is a letter.
In this case, it follows immediately from the transport relations for $\{Z^{(h)}\}_{h\geq 0}$.
\end{proof}
\begin{theorem}\label{thm:Hoffman Ohno}
For $w\in\hof^0\jump{t}$, we have
\[
L(\sigma(w))=L(\sigma(\tau(w))).
\]
\end{theorem}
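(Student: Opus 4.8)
The plan is to reduce the assertion to the single identity $\mathfrak{L}(v)=\mathfrak{L}(\tau'(v))$ for $v\in\hof^0\jump{t}$, and then to prove that identity by the transport bookkeeping of Proposition~\ref{prop:Hoffman transport relations}. First I would rewrite everything in terms of $\mathfrak{L}$. By the boundary condition (Proposition~\ref{prop:Hoffman boundary condtion}) we have $\mathfrak{L}=L\circ\sigma\circ\rho$ on $\hof^0\jump{t}$, and substituting $w\mapsto\rho^{-1}(w)$ gives $L\circ\sigma=\mathfrak{L}\circ\rho^{-1}$; here $\rho^{-1}$ preserves $\hof^0\jump{t}$, since $\rho^{-1}(e_z)=e_z(1+e_zt)^{-1}$ only inserts repeated copies of the leading $e_z$ and changes the label of neither the first nor the last letter of any monomial. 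Hence
\[
L(\sigma(w))=\mathfrak{L}(\rho^{-1}(w)),\qquad L(\sigma(\tau(w)))=\mathfrak{L}(\rho^{-1}(\tau(w))).
\]
Lemma~\ref{lem:rho_tau} gives $\rho^{-1}\circ\tau=\tau'\circ\rho^{-1}$, so with $v\coloneqq\rho^{-1}(w)\in\hof^0\jump{t}$ the desired equality becomes precisely $\mathfrak{L}(v)=\mathfrak{L}(\tau'(v))$.

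For the main step, by $\QQ$-linearity and $t$-adic continuity it suffices to take a single word $v=v_1\cdots v_m\in\hof^0$ with letters $v_i$. If $m=1$, then $v=e_z$ with $\mathrm{Re}(z)\neq\frac12$ and $|z|\neq1$, and the claim is exactly Proposition~\ref{prop:Hoffman transport relations}(iv). If $m\geq2$, membership in $\hof^0$ forces $v_m\in\{x\}\cup\{e_z\mid|z|\neq1\}$ and $v_1=e_z$ with $\mathrm{Re}(z)\neq\frac12$. Then I would peel off the last letter by Proposition~\ref{prop:Hoffman transport relations}(ii),
\[
\mathfrak{L}(v_1\cdots v_m)=\fZ(v_1\cdots v_{m-1};\tau'(v_m)),
\]
apply part (i) repeatedly to transport $v_{m-1},\dots,v_2$ one at a time into the second slot—using that $\tau'$ is an anti-automorphism, so $\tau'(v_{k+1}\cdots v_m)\,\tau'(v_k)=\tau'(v_k\cdots v_m)$—until reaching
\[
\fZ(v_1;\tau'(v_2\cdots v_m)),
\]
and finally apply part (iii) to get $\mathfrak{L}(\tau'(v_2\cdots v_m)\,\tau'(v_1))=\mathfrak{L}(\tau'(v))$, as required.

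The hard part is not any calculation but the verification that every invocation above is legitimate. Part (i) is unconditional, so the transport of the interior letters is automatic; the only eligibility constraints sit at the two endpoints, namely $v_m\in\{x\}\cup\{e_z\mid|z|\neq1\}$ for (ii) and $v_1=e_z$ with $\mathrm{Re}(z)\neq\frac12$ for (iii), together with the standing requirement that both arguments of $\fZ$ stay in $\hof'\jump{t}$, i.e.\ every monomial begins with some $e_z$. All of these are guaranteed precisely because $v\in\hof^0$: the first- and last-letter conditions are exactly the defining conditions of $\hof^0$, and the leading letter of each $\tau'(v_k\cdots v_m)$ is controlled by the eligible $v_m$ (one checks $\tau'(x)$ and $\tau'(e_z)$ with $|z|\neq1$ both begin with an $e$-letter). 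The same dual-condition bookkeeping ensures $\tau'$ maps $\hof^0\jump{t}$ into itself, which is what makes the second boundary-condition identity $L(\sigma(\tau(w)))=\mathfrak{L}(\rho^{-1}(\tau(w)))$ meaningful. Confirming that $\sigma$, $\rho^{\pm1}$, and $\tau'$ all respect the dual condition built into $\hof^0$ is therefore the step demanding the most care, and it is exactly what the definition of $\hof^0$ was engineered to supply.
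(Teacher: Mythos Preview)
Your proof is correct and follows essentially the same approach as the paper: both reduce the statement to the identity $\mathfrak{L}(v)=\mathfrak{L}(\tau'(v))$ via the boundary condition, Lemma~\ref{lem:rho_tau}, and the fact that $\rho^{-1}$ preserves $\hof^0\jump{t}$. The only difference is expository: the paper asserts $\mathfrak{L}(w)=\mathfrak{L}(\tau'(w))$ in one line by citing Proposition~\ref{prop:Hoffman transport relations}, whereas you spell out the letter-by-letter transport (ii)$\to$(i)$\to\cdots\to$(i)$\to$(iii), including the eligibility checks at the endpoints and the verification that both arguments of $\fZ$ remain in $\hof'\jump{t}$ throughout.
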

\begin{proof}
By Proposition~\ref{prop:Hoffman transport relations}, we have
\[
\mathfrak{L}(w)=\mathfrak{L}(\tau'(w)).
\]
Therefore, by Proposition~\ref{prop:Hoffman boundary condtion} and Lemma~\ref{lem:rho_tau}, we have
\[
L\left(\sigma(\rho(w))\right)=L\left(\sigma((\rho\circ\tau')(w))\right)=L\left((\sigma\circ\tau)(\rho(w))\right).
\]
Since $\rho$ is invertible and $\rho^{-1}(\hof^0\jump{t})\subset\hof^0\jump{t}$, we have $L(\sigma(w))=L(\sigma(\tau(w)))$ for all $w\in\hof^0\jump{t}$.
\end{proof}
To restate Theorem~\ref{thm:Hoffman Ohno} without terminology of the Hoffman-type algebra, we define some new notation.
Recall the notion of the dual condition, $\left({\bz\atop\bk}\right)^{\dagger}$ and $\iota(\bz)$ from \S\ref{subsec:duality for MPL}.
\begin{definition}
Let $r$ be a positive integer, $\bk=(k_1,\dots, k_r)$ an index and $\bz=(z_1,\dots,z_r)\in\DD^r$ satisfying that if $k_r=1$, then $|z_r|\neq 1$.
For a non-negative integer $h$, we define $O_h\left({\bz\atop\bk}\right)$ by
\[
O_h\left({\bz\atop\bk}\right)\coloneqq \sum_{\substack{c_1,\dots, c_r\geq 0 \\ c_1+\cdots +c_r=h}}\Li_{k_1+c_1, \dots, k_r+c_r}(\bz).
\]
\end{definition}
\begin{definition}
Let $\bk$ be a non-empty index and $\bz\in \DD^{\dep(\bk)}$.
Set $d=\iota(\bz)$.
When $\left({\bz\atop\bk}\right)$ is written as
\[
\left({\bz\atop\bk}\right)=\left({\{1\}^{a_1},\atop\bk_1,}{z_1\atop l_1}{,\dots,\atop,\dots,}{\{1\}^{a_d},\atop\bk_d,}{z_d,\atop l_d,}{\{1\}^{a_{d+1}}\atop\bk_{d+1}}\right),
\]
where $a_1,\dots, a_{d+1}$ are non-negative integers, $z_1,\dots, z_d\in \DD\setminus\{1\}$, $\bk_1,\dots, \bk_{d+1}$ are indices with $\dep(\bk_1)=a_1,\dots, \dep(\bk_{d+1})=a_{d+1}$ and $l_1,\dots,l_d$ are positive integers.
Let $b_1,\dots, b_d$ be non-negative integers.
Then we define $\left({\bz\atop\bk}\right)_{b_1,\dots, b_d}$ by
\[
\left({\bz\atop\bk}\right)_{b_1,\dots, b_d}\coloneqq
\left({\{1\}^{a_1},\atop\bk_1,}{\{z_1\}^{b_1+1}\atop\{1\}^{b_1},l_1}{,\dots,\atop,\dots,}{\{1\}^{a_d},\atop\bk_d,}{\{z_d\}^{b_d+1},\atop\{1\}^{b_d},l_d,}{\{1\}^{a_{d+1}}\atop\bk_{d+1}}\right).
\]
\end{definition}
\begin{theorem}[Ohno's relation for multiple polylogarithms]\label{thm:Ohno MPLs}
Let $\bk$ be a non-empty index and $\bz\in\DD^{\dep(\bk)}$.
We assume that the pair $\left({\bz\atop\bk}\right)$ satisfies the dual condition and write $\left({\bz\atop\bk}\right)^{\dagger}$ as $\bigl({\bz'\atop\bk'}\bigr)$.
Set $d=\iota(\bz)$.
Then, for any non-negative integer $h$, we have
\[
O_h\left({\bz\atop\bk}\right)=(-1)^{d}\sum_{i=0}^h\sum_{\substack{b_1,\dots,b_d\geq 0 \\ b_1+\dots +b_d=i}}O_{h-i}\left(\left({\bz'\atop\bk'}\right)_{b_1,\dots,b_d}\right).
\]
In particular, when $d=0$ $($then $\bk$ is admissible$)$, this means
\[
O_h\left({\{1\}^{\dep(\bk)}\atop\bk}\right)=O_h\left({\{1\}^{\dep(\bk^{\dagger})}\atop\bk^{\dagger}}\right),
\]
which is the well-known Ohno's relation for multiple zeta values \cite{Ohno}.
\end{theorem}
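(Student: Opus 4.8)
The plan is to read Theorem~\ref{thm:Ohno MPLs} off from the algebraic identity $L(\sigma(w))=L(\sigma(\tau(w)))$ of Theorem~\ref{thm:Hoffman Ohno}, by comparing the coefficient of $t^h$ on the two sides for the word $w=e_{z_1}x^{k_1-1}\cdots e_{z_r}x^{k_r-1}$ associated with $\left({\bz\atop\bk}\right)$. Because $\left({\bz\atop\bk}\right)$ satisfies the dual condition, $w$ is a word of $\hof^0$, hence lies in $\hof^0\jump{t}$, so the theorem applies. The entire argument then reduces to two coefficient extractions.

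First I would expand the left-hand side. From $\sigma(e_z)=e_z(1-xt)^{-1}$ and $(1-xt)^{-1}=\sum_{c\ge 0}x^ct^c$ one obtains
\[
\sigma(w)=\sum_{c_1,\dots,c_r\ge 0}\left(e_{z_1}x^{k_1-1+c_1}\cdots e_{z_r}x^{k_r-1+c_r}\right)t^{c_1+\cdots+c_r}.
\]
The interior word is associated with $\left({\bz\atop(k_1+c_1,\dots,k_r+c_r)}\right)$, so applying $L$ and sorting by total $t$-degree gives
\[
L(\sigma(w))=\sum_{h\ge 0}\left(\sum_{\substack{c_1,\dots,c_r\ge 0\\c_1+\cdots+c_r=h}}\Li_{k_1+c_1,\dots,k_r+c_r}(\bz)\right)t^h=\sum_{h\ge 0}O_h\left({\bz\atop\bk}\right)t^h
\]
by the definition of $O_h$. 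Thus the coefficient of $t^h$ in $L(\sigma(w))$ is the left-hand side of the theorem.

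The substantive step is the analysis of $\tau(w)$. I would use the canonical decomposition of $\left({\bz\atop\bk}\right)$ into its maximal $\{1\}$-blocks $\left({\{1\}^{a_i}\atop\bk_i}\right)$ and its $d=\iota(\bz)$ distinguished letters $e_{z_i}x^{l_i-1}$ with $z_i\ne 1$. Since $\tau$ is an anti-automorphism with $\tau(x)=e_1$ and $\tau(e_1)=x$, the images of the $\{1\}$-blocks are $t$-free and reassemble, by the duality bookkeeping of Theorem~\ref{thm:duality}, into the blocks of $\bigl({\bz'\atop\bk'}\bigr)=\left({\bz\atop\bk}\right)^{\dagger}$. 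Each distinguished letter instead contributes, via $\tau(e_{z_i})=-e_{z_i/(z_i-1)}(1-e_{z_i/(z_i-1)}t)^{-1}$, one factor $-1$ and one geometric series $\sum_{b_i\ge 0}e_{z_i/(z_i-1)}^{\,b_i+1}t^{b_i}$, i.e.~$b_i$ extra copies of the corresponding dual letter. The hard part will be to verify that this insertion of extra letters is exactly the word-level realization of the block-replacement defining $\left({\bz'\atop\bk'}\right)_{b_1,\dots,b_d}$ --- including that duality sends the distinguished entries $z_1,\dots,z_d$ to $z_d/(z_d-1),\dots,z_1/(z_1-1)$ in reversed order and preserves their number $d$ --- so that
\[
\tau(w)=(-1)^d\sum_{b_1,\dots,b_d\ge 0}w'_{b_1,\dots,b_d}\,t^{b_1+\cdots+b_d},
\]
where $w'_{b_1,\dots,b_d}$ is the word associated with $\left({\bz'\atop\bk'}\right)_{b_1,\dots,b_d}$. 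This is the one genuinely combinatorial point, and the definition of $\left({\bz\atop\bk}\right)_{b_1,\dots,b_d}$ is tailored precisely so that it holds.

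Granting this identity for $\tau(w)$, I would finish by applying $L\circ\sigma$ term by term and reusing the computation of the first step for each $w'_{b_1,\dots,b_d}$, which yields
\[
L(\sigma(\tau(w)))=(-1)^d\sum_{b_1,\dots,b_d\ge 0}\sum_{m\ge 0}O_m\left(\left({\bz'\atop\bk'}\right)_{b_1,\dots,b_d}\right)t^{m+b_1+\cdots+b_d}.
\]
Extracting the coefficient of $t^h$, with $i=b_1+\cdots+b_d$ and $m=h-i$, gives exactly the right-hand side of Theorem~\ref{thm:Ohno MPLs}. Equating it with the coefficient of $t^h$ in $L(\sigma(w))$ via Theorem~\ref{thm:Hoffman Ohno} completes the proof. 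In the special case $d=0$ we have $\bz=\{1\}^{\dep(\bk)}$ with no distinguished letters, so $\tau(w)$ is $t$-free, the $b$-sum disappears, and the identity collapses to $O_h\bigl({\{1\}^{\dep(\bk)}\atop\bk}\bigr)=O_h\bigl({\{1\}^{\dep(\bk^{\dagger})}\atop\bk^{\dagger}}\bigr)$, which is Ohno's relation for multiple zeta values.
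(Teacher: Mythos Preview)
Your proposal is correct and follows exactly the paper's approach: the paper's proof consists of the single sentence ``In Theorem~\ref{thm:Hoffman Ohno}, we compare the coefficients of $t^h$ for the case where $w\in\hof^0$ is a word; then we have the desired formula from various definitions of notation,'' and you have simply unpacked what that sentence means. The combinatorial verification you flag as ``the hard part'' --- that expanding the geometric series in $\tau(w)$ reproduces the words associated with $\left({\bz'\atop\bk'}\right)_{b_1,\dots,b_d}$ --- is precisely what the paper subsumes under ``various definitions of notation.''
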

\begin{proof}
In Theorem~\ref{thm:Hoffman Ohno}, we compare the coefficients of $t^h$ for the case where $w\in \hof^0$ is a word.
Then we have the desired formula from various definitions of notation. 
\end{proof}
In the following corollary, we use the symbol $\mathrm{Li}_{\bk}(z)\coloneqq\mathrm{Li}^*_{\bk}(\{1\}^{\dep(\bk)-1},z)=\Li_{\bk}(\{z\}^{\dep(\bk)})$ for the $1$-variable multiple polylogarithm.
\begin{corollary}[The depth $1$ case of the Landen connection formula, \cite{Okuda-Ueno}]
For a positive integer $k$ and a complex number $z$ satisfying $|z|<1$ and $\mathrm{Re}(z)<\frac{1}{2}$, we have
\[
\mathrm{Li}_k(z)=-\sum_{\bk \ \text{with } \wt(\bk)=k}\mathrm{Li}_{\bk}\left(\frac{z}{z-1}\right),
\]
where $\bk$ runs over all indices whose weight equal $k$.
\end{corollary}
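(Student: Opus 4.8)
The plan is to obtain the corollary as the depth-one specialization of Ohno's relation for multiple polylogarithms (Theorem~\ref{thm:Ohno MPLs}). Since $\mathrm{Li}_k(0)=0$ and every $\mathrm{Li}_{\bk}(0)=0$, the identity is trivial when $z=0$, so I may assume $z\neq 0$. I would apply Theorem~\ref{thm:Ohno MPLs} to the pair $\left({z\atop 1}\right)$, that is, to the index $\bk=(1)$ and $\bz=(z)$. The hypotheses $|z|<1$ and $\mathrm{Re}(z)<\frac12$ give $z\in\BB_1(1)$ with $\mathrm{Re}(z)\neq\frac12$ and $|z|\neq 1$, so the dual condition holds; moreover $z\neq 1$, whence $d=\iota(\bz)=1$. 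On the left-hand side, $O_h\left({z\atop 1}\right)=\Li_{1+h}(z)=\mathrm{Li}_{1+h}(z)$ directly from the definition of $O_h$, so taking $h=k-1$ produces $\mathrm{Li}_k(z)$.

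Next I would compute the dual pair. Using the explicit description of $\left({\bz\atop\bk}\right)^{\dagger}$ in \S\ref{subsec:duality for MPL} with $d=1$, $a_1=b_1=1$, $w_1=z$ and all the $\bl_i$ empty, one finds $\left({z\atop 1}\right)^{\dagger}=\left({\frac{z}{z-1}\atop 1}\right)$. Writing $w\coloneqq\frac{z}{z-1}$ (which is never $1$), Ohno's relation with $d=1$ collapses its inner sum to a single term and reads
\[
\mathrm{Li}_k(z)=-\sum_{i=0}^{k-1}O_{k-1-i}\!\left(\left({w\atop 1}\right)_{i}\right).
\]

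The final step is to unwind the right-hand side combinatorially. From the definition of $\left({\bz\atop\bk}\right)_{b_1,\dots,b_d}$ one has $\left({w\atop 1}\right)_{i}=\left({\{w\}^{i+1}\atop\{1\}^{i+1}}\right)$, so that
\[
O_{k-1-i}\!\left(\left({w\atop 1}\right)_{i}\right)=\sum_{\substack{c_1,\dots,c_{i+1}\geq 0\\ c_1+\cdots+c_{i+1}=k-1-i}}\Li_{1+c_1,\dots,1+c_{i+1}}(\{w\}^{i+1})=\sum_{\substack{\wt(\bk)=k\\ \dep(\bk)=i+1}}\mathrm{Li}_{\bk}(w),
\]
where the bijection $(c_1,\dots,c_{i+1})\mapsto(1+c_1,\dots,1+c_{i+1})$ identifies the compositions with the indices of weight $k$ and depth $i+1$. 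Summing over $i=0,\dots,k-1$ sweeps out every depth from $1$ to $k$, hence every index of weight $k$ exactly once, yielding $-\sum_{\wt(\bk)=k}\mathrm{Li}_{\bk}(w)$, which is the claimed formula.

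The work here is entirely bookkeeping; the only point demanding care is convergence, and the main thing to record is that $\mathrm{Re}(z)<\frac12$ is equivalent to $|w|=\left|\frac{z}{z-1}\right|<1$ (compare the squared distances from $z$ to $0$ and to $1$). This ensures that every $\mathrm{Li}_{\bk}(w)$ on the right is absolutely convergent, matching the absolute convergence of $\mathrm{Li}_k(z)$ guaranteed by $|z|<1$, so all the rearrangements above are legitimate.
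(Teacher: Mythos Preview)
Your proof is correct and follows precisely the approach indicated in the paper: specialize Theorem~\ref{thm:Ohno MPLs} to $\left({\bz\atop\bk}\right)=\left({z\atop 1}\right)$ with $h=k-1$. You have simply filled in the routine details (the dual pair, the unwinding of $O_{h-i}\bigl(\left({w\atop 1}\right)_i\bigr)$, and the convergence check $\mathrm{Re}(z)<\tfrac12\Leftrightarrow|w|<1$) that the paper leaves to the reader; the separate treatment of $z=0$ is a nice touch since $0\notin\BB_1(1)$.
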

\begin{proof}
This is the case where $\left({\bz \atop \bk}\right) = \left({z\atop 1}\right)$ and $h=k-1$ in Theorem~\ref{thm:Ohno MPLs}.
\end{proof}
\subsection{Other relations}
\begin{example}\label{ex:Dilcher}
For $k\geq 2$, we have
\begin{equation}\label{eq:Dilcher-Vignat}
\zeta(k)=(-1)^{k-1}\zeta(\{1\}^{k-2},\overline{2})+\sum_{j=2}^{k}(-1)^{k-j+1}\zeta(\{1\}^{k-j},\overline{j}).
\end{equation}
\end{example}
\begin{proof}
Consider the data $n=2$, $\bk_1=(\{1\}^{k-1})$, $\bz_1=(\{1\}^{k-1})$, $\bl=(2)$ and $\bw=(1)$ in the recipe in \S\ref{subsec:Recipe for relations}.
First, with the duality for MZVs, we have
\[
Z_2\left({\{1\}^{k-1}\atop\{1\}^{k-1}};{\emp\atop\emp} \ \middle| \ {1\atop 2}\right)=\zeta(\{1\}^{k-2},2)=\zeta(k).
\]
On the other hand, by Example~\ref{ex:Z_2_gen} \eqref{it:Ex3.1-3}, 
\begin{align*}
&Z_2\left({\{1\}^{k-1}\atop\{1\}^{k-1}};{\emp\atop\emp} \ \middle| \ {1\atop 2}\right)\\
&=-Z_2\left({\{1\}^{k-2}\atop\{1\}^{k-2}};{-1\atop\phantom{-}1} \ \middle| \ {1\atop 2}\right)-Z_2\left({\{1\}^{k-1}\atop\{1\}^{k-1}};{-1\atop\phantom{-}1}\right)\\
&=\cdots\\
&=(-1)^{k-1}Z_2\left({\emp\atop\emp};{\{-1\}^{k-1}\atop \{1\}^{k-1}} \ \middle| \ {1\atop 2}\right)+\sum_{j=2}^k(-1)^{k-j+1}Z_2\left({\{1\}^{j-1}\atop\{1\}^{j-1}};{\{-1\}^{k-j+1}\atop\{1\}^{k-j+1}}\right)
\end{align*}
holds, and by the transport relation for the duality (Example~\ref{ex:tranport relation for duality}),
\[
Z_2\left({\{1\}^{j-1}\atop\{1\}^{j-1}};{\{-1\}^{k-j+1}\atop\{1\}^{k-j+1}}\right)=\cdots=\Li_{\{1\}^{k-j},j}(\{-1\}^{k-j+1})=\zeta(\{1\}^{k-j},\overline{j})
\]
holds for each $j$.
Furthermore, 
\[
Z_2\left({\emp\atop\emp};{\{-1\}^{k-1}\atop \{1\}^{k-1}} \ \middle| \ {1\atop 2}\right)=\Li_{\{1\}^{k-2},2}(\{-1\}^{k-1})=\zeta(\{1\}^{k-2},\overline{2})
\]
holds and thus we have the desired formula.
\end{proof}
\begin{remark}
By using $\zeta(\overline{k})=-(1-2^{1-k})\zeta(k)$, equality \eqref{eq:Dilcher-Vignat} is rewritten as
\[
\zeta(k)=(-2)^{k-1}\left(2\zeta(\{1\}^{k-2},\overline{2})+\sum_{j=3}^{k-1}(-1)^j\zeta(\{1\}^{k-j},\overline{j})\right),
\]
which was recently proved by Dilcher--Vignat \cite[Theorem~4.1]{Dilcher-Vignat}; their proof is different from ours.
The case where $k=3$ is the famous formula $\zeta(3)=8\zeta(1,\overline{2})$; see \cite{Borwein-Bradley}.
\end{remark}
\begin{example}\label{ex:Li11}
Let $z_1,z_2,z_3$ be elements of $\{z\in\CC \mid 0<|z|<1, \mathrm{Re}(z)<\frac{1}{2}\}$ satisfying $\frac{1}{z_1}+\frac{1}{z_2}+\frac{1}{z_3}=1$.
Then we have
\[
\Li_{1,1}\left(z_1,\frac{z_2}{z_2-1}\right)+\Li_{1,1}\left(z_2,\frac{z_3}{z_3-1}\right)+\Li_{1,1}\left(z_3,\frac{z_1}{z_1-1}\right)=0.
\]
\end{example}
\begin{proof}
By the fundamental identity, we have
\[
Z_3\left({z_1\atop 1};{z_2\atop 1};{\emp\atop\emp}\right)+Z_3\left({z_1\atop 1};{\emp\atop\emp};{z_3\atop 1}\right)+Z_3\left({\emp\atop\emp};{z_2\atop 1};{z_3\atop 1}\right)=0
\]
and by Propositions~\ref{prop:fundamental_properties} \eqref{item:reduction}, this is
\begin{equation}\label{eq:Z_2 triple}
Z_2\left({z_1\atop 1};{z_2\atop 1}\right)+Z_2\left({z_1\atop 1};{z_3\atop 1}\right)+Z_2\left({z_2\atop 1};{z_3\atop 1}\right)=0.
\end{equation}
Then we have the desired formula by a similar calculation as in equality~\eqref{eq:Z11}.
\end{proof}
\begin{remark}
By applying Example~\ref{ex:var Cloitre} to \eqref{eq:Z_2 triple}, we have the six-term relation for dilogarithms ($z_1,z_2,z_3$ as in Example~\ref{ex:Li11})
\[
\mathrm{Li}_2(z_1)+\mathrm{Li}_2(z_2)+\mathrm{Li}_2(z_3)=\frac{1}{2}\left\{\mathrm{Li}_2\left(-\frac{z_1z_2}{z_3}\right)+\mathrm{Li}_2\left(-\frac{z_2z_3}{z_1}\right)+\mathrm{Li}_2\left(-\frac{z_3z_1}{z_2}\right)\right\}
\]
attributable to Kummer and Newman; see \cite[p.9]{Zagier}.
\end{remark}
\begin{example}
Let $z_1,z_2,z_3,z_4$ be elements of $\{z\in\CC \mid 0<|z|<1, \mathrm{Re}(z)<\frac{1}{2}\}$ satisfying $\frac{1}{z_1}+\frac{1}{z_2}+\frac{1}{z_3}+\frac{1}{z_4}=1$ and $|g(a,b)|\leq 1$ for $(a,b)=(z_1,z_2), (z_2,z_3), (z_3,z_4), (z_4,z_1)$, where $g(a,b)\coloneqq \frac{ab}{ab-a-b}$.
Then we have an eight-term relation for $\Li_{1,1,1}$,
\[
\mathcal{L}(z_1,z_2,z_3)+\mathcal{L}(z_2,z_3,z_4)+\mathcal{L}(z_3,z_4,z_1)+\mathcal{L}(z_4,z_1,z_2)=0,
\]
where
\[
\mathcal{L}(a,b,c)\coloneqq \Li_{1,1,1}\left(c,g(a,b),\frac{a}{a-1}\right)+\Li_{1,1,1}\left(c,g(a,b),\frac{b}{b-1}\right).
\]
\end{example}
\begin{proof}
By the fundamental identity, we have
\[
Z_4\left({z_1\atop 1};{z_2\atop 1};{z_3\atop 1};{\emp\atop\emp}\right)+Z_4\left({z_1\atop 1};{z_2\atop 1};{\emp\atop\emp};{z_4\atop 1}\right)+Z_4\left({z_1\atop 1};{\emp\atop\emp};{z_3\atop 1};{z_4\atop 1}\right)+Z_4\left({\emp\atop\emp};{z_2\atop 1};{z_3\atop 1};{z_4\atop 1}\right)=0
\]
and by Proposition~\ref{prop:fundamental_properties} \eqref{item:reduction}, this is
\[
Z_3\left({z_1\atop 1};{z_2\atop 1};{z_3\atop 1}\right)+Z_3\left({z_1\atop 1};{z_2\atop 1};{z_4\atop 1}\right)+Z_3\left({z_1\atop 1};{z_3\atop 1};{z_4\atop 1}\right)+Z_3\left({z_2\atop 1};{z_3\atop 1};{z_4\atop 1}\right)=0.
\]
According to the recipe for evaluations (Theorem~\ref{thm:main1 precise}), we have
\begin{align*}
Z_3\left({z_1\atop 1};{z_2\atop 1};{z_3\atop 1}\right)&=-Z_3\left({\emp\atop\emp};{z_2\atop 1};{z_3,\atop 1,} {g(z_1,z_2)\atop 1}\right)-Z_3\left({z_1\atop 1};{\emp\atop\emp};{z_3,\atop 1,} {g(z_1,z_2)\atop 1}\right)\\
&=\Li_{1,1,1}\left(z_3,g(z_1,z_2),\frac{z_2}{z_2-1}\right)+\Li_{1,1,1}\left(z_3,g(z_1,z_2),\frac{z_1}{z_1-1}\right)\\
&=\mathcal{L}(z_1,z_2,z_3).
\end{align*}
The remaining three $Z_3$-values can be calculated in the same way.
\end{proof}
\subsection*{
Conflict of interest
}
On behalf of all authors, the corresponding author states that there is no conflict of interest. 


\end{document}